\newtheorem{theorem}{Theorem}
\newtheorem{corollary}[theorem]{Corollary}
\newtheorem{lemma}[theorem]{Lemma}
\newtheorem{definition}[theorem]{Definition}
\newtheorem{proposition}[theorem]{Proposition}
\newtheorem{remark}[theorem]{Remark}
\theoremstyle{remark}
\date{\today}
\DeclareMathOperator{\grad}{grad}
\newcommand{\eps}{\varepsilon}
\newcommand{\R}{{\bf R}}
\newcommand{\dM}{\partial M}
\newcommand{\din}{\partial_{in}}
\newcommand{\upo}{U_{p_0}}
\newcommand{\vzo}{V_{p_0}}
\newcommand{\exit}{{\tau_{\text{exit}}}}
\newcommand{\cut}{{\tau_{\text{cut}}}}
\newcommand{\con}{{\tau_{\text{con}}}}
\newcommand{\locus}{{\text{cut}}}
\newcommand{\be}[1]{\begin{equation}\label{#1}} 
\newcommand{\ee}{\end{equation}} 
\newcommand{\N}{\mathbf{N}}
\newcommand{\p}{\partial}
\newcommand{\Pwave}{\textit{P}}
\newcommand{\Swave}{\textit{S}}
\title[Boundary distance representation with partial data]{Uniqueness of the partial travel time representation of a compact Riemannian manifold with strictly convex boundary}
  \author[E. Pavlechko]{Ella Pavlechko}
\address{Department of Mathematics,
  North Carolina State University, Raleigh, NC 27695, USA
   (\tt{epavlec@ncsu.edu})}
\author[T. Saksala]{Teemu Saksala}
\address{Department of Mathematics,
  North Carolina State University, Raleigh, NC 27695, USA
   (\tt{tssaksal@ncsu.edu})}
\subjclass[2020]{53C21, 53C24, 53C80, 86A22}
\keywords{Inverse problem, Riemannian geometry, Distance function, Geodesics}
\begin{document}

\begin{abstract}
In this paper a compact Riemannian manifold with strictly convex boundary is reconstructed from its partial travel time data. This data assumes that an open measurement region on the boundary is given, and that for every point in the manifold, the respective distance function to the points on the measurement region is known. This geometric inverse problem has many connections to seismology, in particular to microseismicity. The reconstruction is based on embedding the manifold in a function space. This requires the differentiation of the distance functions. Therefore this paper also studies some global regularity properties of the distance function on a compact Riemannian manifold with strictly convex boundary. 
\end{abstract}

\maketitle

\section{Introduction}
\label{sec:intro}
This paper is devoted to an inverse problem for smooth compact Riemannian manifolds with  smooth  boundaries.  Suppose that there is a dense but unknown set of point sources going off in an unknown Riemannian manifold. For each of these point sources we measure the travel time of the respective wave on some small known open subset of the boundary of the manifold. The point source can be natural (e.g. an earthquake as a source of seismic waves) or artificial (e.g. produced by focusing of waves or by a wave sent in scattering from a point scatterer). We show that this partial travel time data determines the unknown Riemannian manifold up to a Riemannian isometry under some geometric constraints. Namely, we assume that the unknown manifold has a strictly convex boundary. 
We also provide an example demonstrating that the convexity is a necessary assumption. To prove our result, we embed a Riemannian manifold with boundary into a function space and use smooth boundary distance functions to give a coordinate and the Riemannian structures. 
 
The inverse problem introduced above can be rephrased as the following
problem in seismology. Imagine that earthquakes occur at known times
but unknown locations within Earth's interior and arrival times are
measured on some small area on the surface. Are such partial travel time measurements sufficient to determine the possibly anisotropic elastic wave speed everywhere in the interior and pinpoint the locations of the
earthquakes? While earthquake times are not  known in practice, this
is a fundamental mathematical problem that underlies more elaborate
geophysical scenarios.

An elastic body --- e.g. a planet --- can be modeled as a manifold, where distance is measured in travel time:
The distance between two
points is the shortest time it takes for a wave to go from one point
to the other.
If the material is isotropic or elliptically anisotropic, then this
elastic geometry is Riemannian.
However this sets a very stringent
assumption on the stiffness tensor describing the elastic system, one which is not physically observed in the Earth, and
Riemannian geometry is therefore insufficient to describe the
propagation of seismic waves in the Earth.
If no structural
assumptions on the stiffness tensor apart from the physically
necessary symmetry and positivity properties are made, this leads
necessarily to modeling the planet by a Finsler manifold as was explained in~\cite{de2021determination}. 

An isotropically elastic medium carries pressure (\Pwave) and shear (\Swave) wave speeds that are conformally Euclidean metrics.
Of these two the \Pwave-waves are faster~\cite{cerveny2005seismic}.
In order to be true to the isotropic elasticity we should measure both \Pwave- and \Swave-wave arrivals.
In this paper we simplify this aspect of the problem by disregarding polarizations and considering only one type of isotropic waves.

\subsection*{Acknowledgements} EP was funded by the AWM and NSF-DMS grant \# 1953892 for travel to the 2022 Joint Mathematics Meetings (JMM), where this paper is to be presented.

\subsection{Main theorem and the geometric assumptions}
\label{sec:main_thm}
We consider a compact $n$-dimen-sional smooth manifold $M$ with smooth boundary $\dM$, equipped with a smooth Riemannian metric $g$.  For points  $p,q \in M$ the Riemannian distance between them is denoted by $d(p,q)$. 
Then for $p \in M$ we define the \textit{boundary distance function} $\hat{r}_p:\dM \to \R$ given by $\hat{r}_p(z) = d(p,z)$. Let $\Gamma$ be a non-empty open subset of the boundary $\dM$. We denote the restriction of the boundary distance function on this set as $r_p:=\hat{r}_p\big|_\Gamma$. The collection
\begin{equation}
\Gamma \qquad \text{and} \qquad \{ r_p: \Gamma \to \R ~:~ r_p(z) = d(p,z) \},
\label{eqn:data}
\end{equation}
are called the \textit{partial travel time data of}  $\Gamma\subset \dM$. With these data we seek to recover the Riemannian manifold $(M,g)$ up to a Riemannian isometry. The following definition explains when two Riemannian manifolds have the same partial travel time data \eqref{eqn:data}.

\begin{definition}
\label{def:data}
Let $(M_1, g_1)$ and $(M_2,g_2)$ be compact, connected, and oriented Riemannian manifolds of dimension $n\in \N, \: n\geq 2$ with smooth boundaries $\dM_1$ and $\dM_2$ and open non-empty regions $\Gamma_i\subset \dM_i$ respectively. We say that the partial travel time data of $(M_1, g_1)$ and $(M_2,g_2)$ coincide if there exists a diffeomorphism $\phi:\Gamma_1 \to \Gamma_2$ such that
\begin{equation}
\label{eq:equiv_of_data}
\{r_p \circ \phi^{-1}~:~ p \in M_1\} = \{ r_q ~:~ q \in M_2\}.
\end{equation}
\end{definition}
\noindent
We want to emphasize that the equality \eqref{eq:equiv_of_data} is for the non-indexed sets of travel time functions. Thus, for any $p \in M_1$ there exists a point $q \in M_2$ such that $r_{p}(\phi^{-1}(z)) = r_{q}(z)$ for every $z \in \Gamma_2$. We do not know \textit{a priori} where the point $p\in M_1$ is or if there are several points $q\in M_2$ that satisfy this equation.

\medskip

We use the notations $TM$ and $SM$ for the tangent and unit sphere bundles of $M$. Their respective fibers, for each point $p \in M$, are denoted by $T_pM$ and $S_pM$. In order to show that the data \eqref{eqn:data} determine $(M,g)$, up to an isometry or in other words that the Riemannian manifolds $(M_1,g_1)$ and $(M_2,g_2)$ of Definition \ref{def:data} are Riemanian isometric, we need to place an additional geometric restriction. We assume that $(M,g)$ has a \textit{strictly convex boundary} $\dM$ which means that the shape operator $S\colon T\p M \to T\p M$ as a linear operator on each tangent space $T_x\dM$ of the boundary $\dM$ for a point $x \in \dM$ is negative definite. This means that the second fundamental form
\[
\Pi_{x}(X,Y)=\langle S(x) X, Y\rangle_g,
\]
is strictly negative whenever $X,Y\in T_x\p M$ agree, but do not vanish.  
It has been shown in \cite{bishop1974infinitesimal, bartolo2011convex} that the strict convexity of the boundary implies the \textit{geodesic convexity} of $(M,g)$. That is any pair of points $p,q \in M$ can be connected by a distance minimizing geodesic (not necessarily unique) which is contained in the interior $M^{int}$ of $M$ modulo the terminal points. In particular any geodesic of $M$ that hits the boundary exits immediately. 

The main theorem of this paper is the following:

\begin{theorem} 
Let $(M_1, g_1)$ and $(M_2,g_2)$ be compact, connected, and oriented Riemannian manifolds of dimension $n\in \N, \: n\geq 2$ with smooth and strictly convex boundaries $\dM_1$ and $\dM_2$ and open non-empty measurement regions $\Gamma_i\subset \dM_i$ respectively.
If the travel time data of $(M_1,g_1)$ and $(M_2,g_2)$ coincide, in the sense of Definition \ref{def:data}, then the Riemannian manifolds $(M_1,g_1)$ and $(M_2,g_2)$ are Riemannian isometric. 
\label{thm:main}
\end{theorem}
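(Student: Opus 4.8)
The strategy is the classical boundary-distance-function embedding of Katsuda--Kurylev--Lassas, adapted to the partial-data setting and upgraded to smooth (rather than merely continuous) embeddings so that the Riemannian metric can be recovered. Concretely, I would work with the map
\[
\mathcal{R}\colon M \to C(\Gamma), \qquad p \mapsto r_p,
\]
and first establish that it is a topological embedding. Injectivity is the heart of the matter: if $r_p = r_q$ as functions on the open set $\Gamma$, I would show $p = q$. The key tool is that for $z\in\Gamma$ the function $\hat r_p$ restricted to a minimizing geodesic from $z$ is, near $z$, smooth with gradient a unit vector; combined with strict convexity of the boundary (which by the cited Bishop/Bartolo--Germinario result forces geodesics to minimize through the interior and to leave $\dM$ immediately), this lets me read off, from $r_p$ near a well-chosen $z$, the initial direction and length of the minimizing geodesic from $z$ to $p$. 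Two points with the same $r_p$ would then have to lie at the end of the same such geodesic, forcing $p=q$. Continuity of $\mathcal R$ is immediate from the triangle inequality ($|r_p(z)-r_{p'}(z)|\le d(p,p')$), and since $M$ is compact $\mathcal R$ is a homeomorphism onto its image; the data-equivalence in Definition \ref{def:data} means the images $\mathcal R_1(M_1)$ and $\mathcal R_2(M_2)$ coincide after the pullback by $\phi$, so $M_1$ and $M_2$ are homeomorphic via $\mathcal R_2^{-1}\circ(\phi^{-1})^*\circ\mathcal R_1$.

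Next I would promote this homeomorphism to a diffeomorphism and then to an isometry. This requires differentiating the distance functions, which is why the paper announces a study of global regularity of $\hat r_p$: one must understand the set of $p$ for which $z\mapsto r_p(z)$ fails to be smooth, i.e.\ the interaction of the cut locus and the boundary. The plan is: (i) identify the "good" directions --- points $z\in\Gamma$ that are joined to $p$ by a unique minimizing geodesic meeting $\Gamma$ non-tangentially and with $p$ not in the cut locus of $z$ --- and show that for each $p$ there are enough such $z$ (an open set of them) to build smooth coordinates; (ii) on the interior, use that $\grad_z d(p,z)$ at such $z$ encodes the exit direction, so finitely many functions $r_{p_1},\dots,r_{p_n}$ with suitably independent exit data give a smooth coordinate chart near a boundary point, and their mutual distances / the induced boundary metric pin down $g$; (iii) transport $g_1$ to $M_2$ through the constructed map and verify the pullback metric equals $g_2$ because all the local geometric quantities (gradients of distance functions, Hessians, the boundary metric on $\Gamma$ via $\phi$) are determined by the common data. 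A separate ingredient is recovering the manifold near $\Gamma$ itself and matching the smooth structures there using $\phi$ together with the boundary normal/exit-time information extracted from the $r_p$.

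The main obstacle I anticipate is step (i)--(ii): controlling the bad set and proving that "enough" good boundary points exist for every interior point, uniformly enough to get smooth charts. With only a partial measurement region $\Gamma$, a given interior point $p$ might be connected to $\Gamma$ only by geodesics that are tangential to $\dM$, or that pass through conjugate/cut points before reaching $\Gamma$, or the minimizer to every $z\in\Gamma$ might be non-unique; strict convexity of the boundary is exactly what is invoked to rule out the tangential pathology and to guarantee that minimizers from boundary points travel in the interior, but one still has to argue that the set of $z\in\Gamma$ giving a unique, non-conjugate, transversal minimizer to $p$ is open and nonempty, and depends nicely on $p$. A related subtlety is near the edge of $\Gamma$ and near $\dM$, where $r_p$ is only one-sidedly controlled; here the example promised in the introduction shows convexity cannot be dropped, so the argument must genuinely use it. Once the good set is under control, the rest --- building coordinates, recovering the metric from first and second derivatives of distance functions, and checking the isometry --- is the expected, if technical, machinery.
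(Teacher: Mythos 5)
Your outline follows the same overall route as the paper: embed $M$ into a function space via $p \mapsto r_p$, prove injectivity by reading off the initial direction and length of the minimizing geodesic from the boundary gradient of $r_p$ at a well-chosen $z$, upgrade the resulting homeomorphism to a diffeomorphism using distance-based coordinates, and recover $g$ from the gradients of distance functions (an open subset of the unit (co)sphere at $p$ determines the inner product, cf.\ Lemma \ref{lem:inner_prod_from_sphere}). However, the step you yourself flag as ``the main obstacle'' is precisely the paper's central technical contribution and is left entirely unproved: for every $p \in M$ one must show that the set of $z \in \partial M$ admitting a unique, non-conjugate minimizer to $p$ --- equivalently $\partial M \setminus \locus(p)$ --- is open and dense. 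This is not a routine genericity statement on a manifold with boundary. The paper first has to make sense of the cut locus at all: Proposition \ref{prop:extension} constructs a strictly convex extension $\hat M$ of $M$ with $d_{\hat M}=d_M$ on $\bar M$, so that ``cut point'' is not conflated with ``the geodesic merely reached $\partial M$''. It then extends the Itoh--Tanaka/Ozols structure theory to show that the typical cut points form a hypersurface transverse to $\partial M$ while the conjugate and atypical cut points have Hausdorff dimension at most $n-2$ (Theorem \ref{thm:cut_locus}); only then does $\locus(p)\cap\partial M$ have dimension at most $n-2$ and hence empty interior in $\partial M$. Without an argument here the construction has no starting point, since a priori $\locus(p)$ could cover all of $\Gamma$.

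A second concrete gap: you never explain how to decide, from the data alone, whether a given function $r_p$ belongs to a boundary point or an interior point of the unknown manifold. This is needed both to choose the correct type of coordinate chart (the boundary charts require a boundary-defining function, which the paper builds as $T_{z,\eta(p)}-r_p(z)$, where $T_{z,v}$ is the supremum of $r_q(z)$ over the data-determined set $\sigma(z,v)$ of \eqref{eqn:sigma}) and to verify that the identification $\Psi$ between the two manifolds maps $\partial M_1$ onto $\partial M_2$. The paper's criterion --- $p\in\partial M$ iff there is $(z,v)$ with $p\in\sigma(z,v)$, $\sigma(z,v)$ closed, and $r_p(z)=T_{z,v}$ --- rests on showing that closedness of $\sigma(z,v)$ forces the cut time to equal the exit time (Lemma \ref{thm: Tzv=exit}), which in turn uses the Klingenberg-type dichotomy at the cut point. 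This is a genuine ingredient, not bookkeeping, and your plan does not supply it. (Your proposed charts built from $n$ distance functions with independent gradients would also work in the interior, but near $\partial M$ they do not produce a half-space model; the paper's single-source chart $(\,-\grad_{\partial M}r_p(z_0),\,T_{z_0,\eta(p)}-r_p(z_0))$ is what makes the boundary case go through.)
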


\begin{remark}
Our assumptions in Theorem \ref{thm:main} do not prevent the existence of the conjugate points. Actually quite a lot of work in this paper is needed to handle their existence. We also allow the manifolds to have trapped geodesics.
\end{remark}

\subsection{Outline of the proof of Theorem \ref{thm:main}}
The main tool of proving Theorem \ref{thm:main} is to differentiate the travel time functions given in equation \eqref{eqn:data}. As these functions are defined only on a small open subset of the boundary we need to develop some regularity theorem for them. For this reason in Section \ref{sec:dist_funct} we study the regularity properties of the distance function on Riemannian manifolds satisfying the geometric constraints of Theorem \ref{thm:main}. Section \ref{sec:dist_funct} has two main results. Theorem \ref{prop:dist_smoothness} is the aforementioned regularity result and the key of the proof of Theorem \ref{thm:main}. In order to prove Theorem \ref{prop:dist_smoothness} we need to study, for each point in our manifold, the properties of its cut locus. This is the set past which the geodesics shot from the chosen point are not anymore distance minimizers. Theorem \ref{thm:cut_locus} collects the needed properties of these sets. Up to the best of our knowledge the material presented in Section \ref{sec:dist_funct} does not exist or is not easily accessible in the literature. Nevertheless, the corresponding results for manifolds without boundaries are well known. 

In Section \ref{sec:reconst} we apply Theorem \ref{prop:dist_smoothness} to reconstruct the Riemannian manifold from its partial travel time data \eqref{eqn:data}. This is done in five parts. Firstly we recover the geometry of the measurement region. As the second step we recover the topological structure by embedding the unknown manifold into a function space. Then we determine the boundary. The fourth step is to find local coordinates. Since our manifold has a boundary, we need different types of local coordinates for the interior and boundary points. Lastly we reconstruct the Riemannian metric. All the steps in Section \ref{sec:reconst} are fully data driven. Finally, in Section \ref{sec:proof} we show that if two Riemannian manifolds, as in Theorem \ref{thm:main}, have coinciding partial travel time data, in the sense of the Definition \ref{def:data}, then they are isometric. 
 

\subsection{The convexity of the domain in Theorem \ref{thm:main} is necessary}
 Let us construct an explicit example of a surface~$M$ and a subset $\Gamma\subset\partial M$ so that our results fail with data recorded only on~$\Gamma$ (this example was originally presented in \cite{de2021stable}). 
We recall that every pair of points on a smooth compact Riemannian manifold with boundary is always connected by a $C^1$-smooth distance minimizing curve~\cite{alexander1981geodesics}.
We choose our a manifold to be the horseshoe-shaped domain of Figure~\ref{Fi:f_p}. We split the domain~$M$ into two pieces~$M_1$ and~$M_2$ with respect to the line (red dotted line) that is normal to~$\p M$ at $x_0\in \p M$ (blue dot).
Then we choose a domain  $\Gamma \subset \p M_1$ (red arch) so that any minimizing curve joining a point on~$\Gamma$ and a point in~$M_2$ touches the boundary near~$x_0$.
The curve $P\subset M$ is any involute of the boundary, meaning that the distance from all points on~$P$ to~$x_0$ is the same. Because $d(z,p)=d(z,q)$ for any $z\in\Gamma$ and $p,q\in P$, from the point of view of our data \eqref{eqn:data}, the set~$P$ appears to collapse to a point.

\begin{figure}[ht]
\begin{picture}(300,200)
  \put(0,0){\includegraphics[width=10cm]{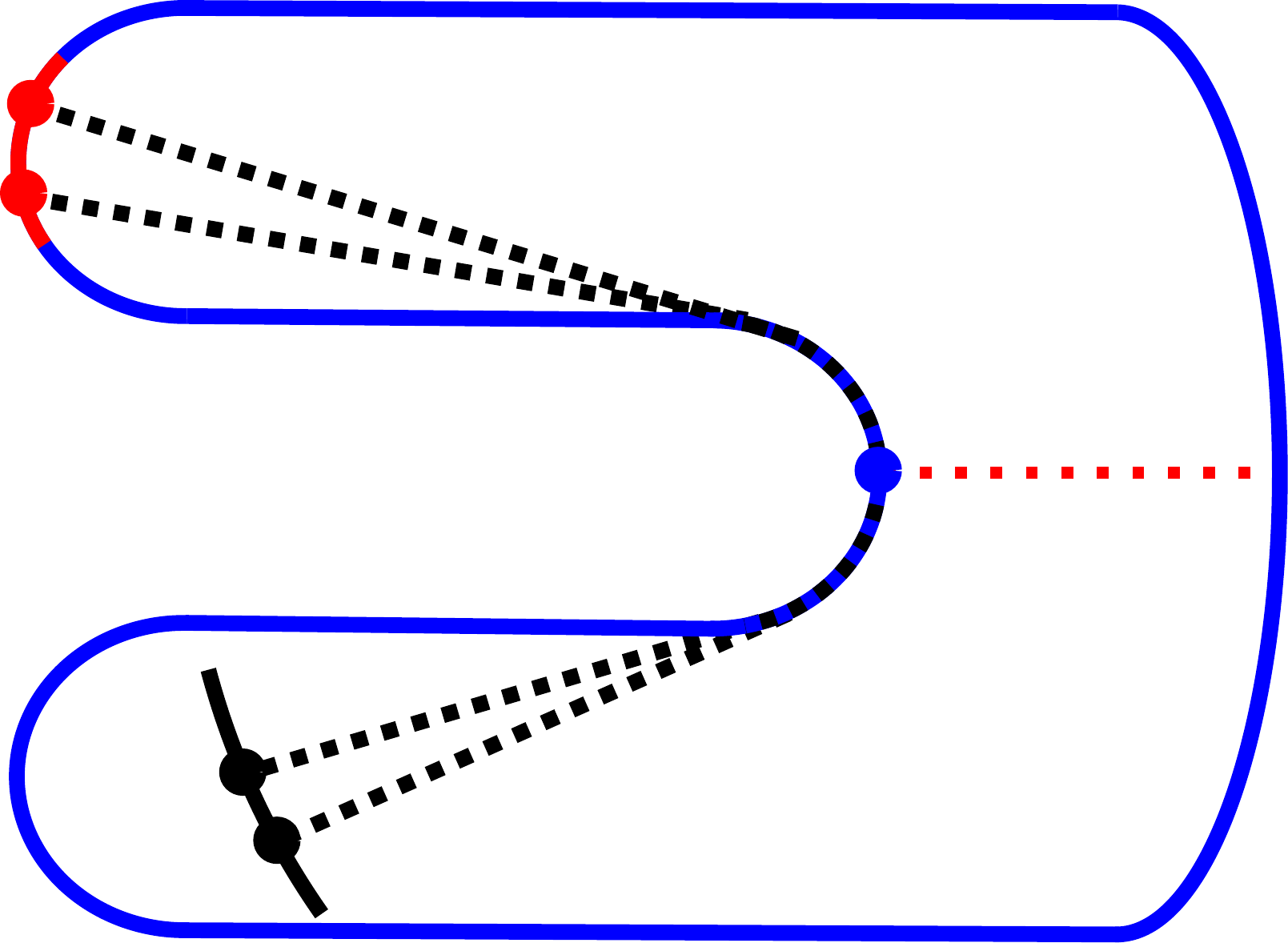}}
  \put(35,25){$P$}
  \put(175,15){$M_2$}
  \put(175,100){$x_0$}
    \put(-15,170){$\Gamma$}
    \put(175,175){$M_1$}
  \end{picture}
\caption{A domain where partial data is insufficient.}

\label{Fi:f_p}
\end{figure}

\section{Some geometric inverse problems arising from seismology}
In the following subsections we review some seismologically relevant geometric inverse problems on Riemannian and Finsler manifolds.

\subsection{Related geometric inverse problems on Riemannian manifolds}
Let $(M,g)$ be as in Theorem \ref{thm:main} and denote the Laplace-Beltrami operator of the metric $g$ by $\Delta_g$. Let $(p,t_0)\in M^{int}\times \R$. In this paper we consider an inverse problem related to the wave equation 
\begin{equation}
    \begin{cases}
    (\partial_t^2 - \Delta_g) u(x,t) = \delta_p(x) \delta_{t_0}(t),\\
    u(x,t)= 0,  \qquad t< t_0, ~x \in M,
    \end{cases}
    \label{eqn:ivp}
\end{equation}
where the solution $u(x,t)$ is a spherical wave produced by an interior point source given by the delta function $\delta_p(x) \delta_{t_0}(t)$ of the space time $M \times \R$ at $(p,t_0)$. We define the \textit{arrival time function} $\mathcal T_{p,t_0} \colon \p M \to \R$, of the wave $u$ by the formula
\[
\begin{split}
\mathcal T_{p,t_0}(z)=\sup \{& t\in \R:\hbox{ the point $(z,t)\in \p M \times \R$ has a neighborhood}
\\
&  \text{$U\subset M \times \R$ such that $ u(\cdot,\cdot)\big|_U =0$}\}.
\end{split}
\]
We recall that it was shown in \cite[Proposition 3.1]{lassas2015determination}, by applying the results of \cite{duistermaat1972fourier, greenleaf1993recovering}, that 
$
\mathcal T_{p,t_0}(z)=d(p,z)+t_0.
$
If the initial time $t_0$ is zero (or given), the knowledge of the arrival time function $\mathcal T_{p,t_0}$ on the open set $\Gamma \subset \p M$ is equivalent to the corresponding travel time function $r_p(\cdot)$ as in \eqref{eqn:data}. 

The problem of determining the isometry type of a compact Riemannian manifold from its \textit{boundary distance data}
\[
\{\hat r_p\colon \p M \to (0,\infty);\: \hat{r}_p(z)=d(p,x),\: p \in M^{int}\}
\]
was introduced for the first time in~\cite{kurylev1997multidimensional}. The reconstructions of the smooth atlas on the manifold and the metric tensor in these coordinates was originally considered in~\cite{Katchalov2001}. In contrast to the paper at hand, these earlier results do not need any extra assumption for the geometry, but have complete data in the sense that the measurement area $\Gamma$ is the whole boundary. Their counterpart of our Theorem \ref{prop:dist_smoothness}, is \cite[Lemma 2.15]{Katchalov2001}, which says that the boundary distance function $\hat r_p$ is smooth near its minimizers (the set of closest boundary points to the source point $p\in M$). This lemma is a key of their proof. However, the same technique is not available to us as it requires the access to the whole boundary. 

The problem of boundary distance data is related to many other geometric inverse problems. For instance, it is a crucial step in proving uniqueness for Gel'fand's inverse boundary spectral problem~\cite{Katchalov2001}.
Gel'fand's problem concerns the question whether the data
\[
(\p M, (\lambda_j, \p_\nu \phi_j|_{\p M})_{j=1}^\infty)
\]
determine $(M,g)$ up to isometry, when $(\lambda_j, \phi_j)$ are the Dirichlet eigenvalues and the corresponding $L^2$-orthonormal eigenfunctions of the Laplace--Beltrami operator.
Belishev and Kurylev provide an
affirmative answer to this problem in~\cite{belishev1992reconstruction}.

In \cite{katsuda2007stability} the authors studied a question of approximating a Riemannian manifold under the assumption: For a finite set of receivers $R\subset \p M$ one can measure the travel times $d(p,\cdot)|_{R}$ for finitely many $p \in P\subset  M^{int}$ under the \textit{a priori} assumption that $R \subset \p M$ is $\eps$-dense and that $\{d(p,\cdot)|_{R}: \: p \in P\}\subset \{d(p,\cdot)|_{R}: \: p \in M^{int}\}$ is also $\eps$-dense.  Thus $\{d(p,\cdot)|_{R}: \: p \in P\}$ is a finite measurement.  The authors construct an approximate finite metric space  $M_\eps$ and show that the Gromov-Hausdorff distance of $M$ and $M_\eps$ is proportional to some positive power of $\eps$. In \cite{katsuda2007stability} an independent travel time measurement is made for each interior source point in $P$, whereas in \cite{de2021stable} the authors studied the approximate reconstruction of a simple Riemannian manifold (a compact Riemannian manifold with strictly convex boundary where each pair of points is connected by the unique smoothly varying distance minimizing geodesic) by measuring the arrival times of wave fronts produced by several point sources, that go off at unknown times, and moreover, the signals from the different point sources are mixed together.  To describe the similarity of two metric spaces `with the same boundary' the authors defined a labeled Gromov-Hausdorff distance. This is an extension of the classical Gromov-Hausdorff distance which compares both the similarity of the metric spaces and the sameness of the boundaries --- with a fixed model space for the boundary. In addition to reconstructing a discrete metric space approximation of $(M,g)$, the authors in \cite{de2021stable} estimated the density of the point sources and established an explicit error bound for the reconstruction in the labeled Gromov-Hausdorff sense.

If we do not know the initial time~$t_0$ in \eqref{eqn:ivp}, but we can recover the arrival times $\mathcal{T}_{p,t_0}(z)$ for each $z\in \p M$, then taking the difference of the arrival times one obtains a boundary distance difference function
\[
D_{p}(z_1,z_2):=d(p,z_1)-d(p,z_2)
\]
for all $z_1,z_2 \in \p M$,
which is independent from the initial time $t_0$.
In~\cite{lassas2015determination}
it is shown that if $U\subset N$ is a compact subset of a closed
Riemannian manifold $(N,g)$ with a non-empty interior, then
\emph{distance difference data} 
\[
((U,g|_{U}), \{D_p\colon U\times U
\to \R \:| \:p \in N\})
\]
determine $(N,g)$ up to an isometry. This result was generalized for complete Riemannian manifolds~\cite{ivanov2018distance} and for compact Riemannian manifolds with boundary~\cite{de2018inverse, ivanov2020distance}. These results require the full boundary measurement in the sense of $\Gamma=\p M$, unlike Theorem \ref{thm:main} in the present paper. 

If the sign in the definition of the distance difference functions is
changed, we arrive at the distance sum functions,
\[
D^+_p(z_1,z_2)=d(z_1,p)+d(z_2,p)
\]
for all $p\in M$ and $z_1,z_2\in \p M$.
These functions give the lengths of the broken geodesics, that is, the
union of the shortest geodesics connecting~$z_1$ to~$p$ and the
shortest geodesics connecting~$p$ to~$z_2$.
Also, the gradients of $D^+_p(z_1,z_2)$ with respect to~$z_1$ and $z_2$ give the velocity
vectors of these geodesics.
The inverse problem of determining the
manifold $(M,g)$ from the \emph{broken geodesic data}, consisting of
the initial and the final points and directions, and the total length
of the broken geodesics, has been considered in~\cite{kurylev2010rigidity}. The authors show that broken geodesic data determine the boundary distance data of any compact smooth manifold of dimension three and higher. Finally they use
the results of \cite{Katchalov2001, kurylev1997multidimensional} to prove that the broken geodesic data determine the Riemannian manifold up to an isometry. 
A different variant of broken geodesic data was recently considered in~\cite{meyerson2020stitching}.

The Riemannian wave operator is a globally hyperbolic
linear partial differential operator of real principal
type. Therefore, the Riemannian distance function and the propagation
of a singularity initiated by a point source in space time are related
to one another.
We let~$u$ be the solution of the Riemannian wave equation as in \eqref{eqn:ivp}.
In \cite{duistermaat1996fourier, greenleaf1993recovering} it is shown
that the image, $\Lambda$, of the wave front set of~$u$, under the
musical isomorphism $T^\ast M \ni (x,\xi) \mapsto (x,g^{ij}(x)\xi_i) \in TM$, coincides
with the image of the tangent space ~$T_{p}M$ at~$p \in M^{int}$ 
under the geodesic flow of~$g$. Thus $\Lambda \cap \p(S M)$, where~$SM$ is the unit sphere bundle of $(M,g)$, coincides with the exit
directions of geodesics emitted from~$p$.
In~\cite{lassas2018reconstruction} the authors show that if $(M,g)$ is a
compact smooth non-trapping Riemannian manifold with smooth strictly
convex boundary, then generically the \emph{scattering data of point 
sources} $(\p M, R_{\p M}(M))$ determine $(M,g)$ up to an isometry.
Here, $R_{\p M}(p) \in R_{\p M}(M)$ for $p \in M$ stands for
the collection of tangential components to the boundary of exit directions
of geodesics from~$p$ to~$\p M$. 


A classical geometric inverse problem, that is closely related to the distance functions, asks:
Does the Dirichlet-to-Neumann mapping of a Riemannian wave operator determine a Riemannian manifold up to an isometry?
For the full boundary data this problem was solved originally in~\cite{belishev1992reconstruction} using the Boundary control method. Partial boundary data questions have been studied for instance in \cite{lassas2014inverse, milne2019codomain}.
Recently~\cite{kurylev2018inverse} extended these results for connection Laplacians.
Lately also inverse problems related to non-linear hyperbolic equations have been studied extensively  \cite{kurylev2017inverse, lassas2018inverse, wang2016inverse}.
For a review of inverse boundary value problems for partial differential equations see \cite{LassasICM2018, uhlmann1998inverse}.

Maybe the most extensively studied geometric inverse problem formulated with the distance functions is the Boundary rigidity problem. This problem asks:  Does the \emph{boundary distance function}, that gives a distance between any two boundary points, determine $(M,g)$ up to an isometry? In an affirmative case $(M,g)$ is said to be boundary rigid. For a general Riemannian manifold the problem is false: Suppose the manifold contains a domain with very slow wave speed, such that all the geodesics starting and ending at the boundary avoid this domain. Then in this domain one can perturb the metric in such a way that the boundary distance function does not change. It was conjectured in~\cite{michel1981rigidite} that for all simple Riemannian manifolds the answer is affirmative. In two dimensions this was verified in~\cite{pestov2005two}. For higher dimensional cases the problem is still open, but different variations of it has been considered for instance in \cite{burago2010boundary, croke1991rigidity, stefanov2016boundary, stefanov2017local}. 

\subsection{Related geometric inverse problems on Finsler manifolds}

In~\cite{de2021determination} the authors studied the recovery of a compact Finsler manifold from its boundary distance data. In contrast to earlier Riemannian results \cite{Katchalov2001, kurylev1997multidimensional} the data only determines the topological and smooth structures, but not the global geometry. However the Finsler function $F\colon TM \to [0,\infty)$ can be recovered in a closure of the set $G(M,F)\subset TM$, which consists of points $(p,v)\in TM$ such that the corresponding geodesic~$\gamma_{p,v}$ is distance minimizing to the terminal boundary point. They also showed that if the set $TM \setminus G(M,F)$ is non-empty then any small perturbation of~$F$ in this set leads to a Finsler metric whose boundary distance data agrees with the one of~$F$. If $G(M,F)=TM$, then the boundary distance data determines $(M,F)$ up to a Finsler isometry. For instance the isometry class of any simple Finsler manifold is determined by this data.
The same is not true if only the boundary distance function is known~\cite{ivanov2013local}. Thus a simple Finsler manifold is never boundary rigid. In~\cite{de2020foliated} the main result of~\cite{de2021determination} was utilized to generalize the result of~\cite{kurylev2010rigidity}, about the broken geodesic data, on reversible Finsler manifolds, satisfying a convex foliation condition. 

Although, simple Finsler manifolds are not boundary rigid there are results considering their rigidity questions for some special Finsler metrics.
For instance it was shown in~\cite{monkkonen2020boundary} that Randers metrics $F_k=G_k+\beta_k$ indexed with $k \in \{1,2\}$ with simple and boundary rigid Riemannian norm $G_k(x,v)=\sqrt{g_{ij}(x)v^iv^j}$ and closed one-form~$\beta_k$, have the same boundary distance function if and only if $G_1=\Psi^{\ast}G_2$ for some boundary fixing diffeomorphism $\Psi\colon M \to M$ and $\beta_1-\beta_2=\mathrm d \phi$ for some smooth function~$\phi$ vanishing on~$\p M$. It is worth mentioning that analogous results have been presented earlier on a Riemannian manifold in the presence of a magnetic field \cite{AsZh, dairbekov2007boundary}.

\section{Distance functions on compact manifolds with strictly convex boundary}
\label{sec:dist_funct}

The aim of this section is to prove the following regularity result for the Riemannian distance function. 

\begin{theorem}
Let $(M,g)$ be a smooth, compact, connected, and oriented Riemannian manifold of dimension $n\in \N, \: n\geq 2$ with smooth and strictly convex boundary. For any $p_0 \in M$ there exists an open and dense set $W_{p_0}\subset \p M$ such that for every $z_0 \in W_{p_0}$ there are neighborhoods $\upo\subset M$ of $p_0$ and $\vzo\subset M$ of $z_0$ such that the distance function $d(\cdot,\cdot)$ is smooth in the product set $\upo\times \vzo$.
\label{prop:dist_smoothness}
\end{theorem}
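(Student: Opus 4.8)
The plan is to define $W_{p_0}$ as the set of $z\in\p M\setminus\{p_0\}$ that are joined to $p_0$ by a \emph{unique} minimizing geodesic along which $z$ is \emph{not} conjugate to $p_0$, and then to establish, in order, that $W_{p_0}$ is open, that $W_{p_0}$ is dense, and that $d$ is jointly smooth on a neighborhood $\upo\times\vzo$ of each pair $(p_0,z_0)$ with $z_0\in W_{p_0}$. Strict convexity of $\p M$ enters through the geodesic convexity recalled above: any minimizing geodesic from $p_0$ to a boundary point runs in $\Mint$ except at its endpoints, meets $\p M$ only at its endpoint, and does so \emph{transversally} (a geodesic of $M$ tangent to $\p M$ leaves $M$ on both sides). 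I would package this into an \emph{exit map}: on the open set $U\subset S_{p_0}M$ of unit directions whose geodesic reaches $\p M$ in finite positive time, the first-hitting time $\tau(w)$ is smooth (implicit function theorem with a boundary defining function, using transversality of the hit), and $h\colon U\to\p M$, $h(w)=\gamma_w(\tau(w))$, is a smooth map between manifolds of equal dimension $n-1$. Trapped directions are those outside $U$ and are harmless.

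For the smoothness near $(p_0,z_0)$, let $\gamma_0$ be the minimizer, $\ell_0$ its length, $v_0=\ell_0\dot\gamma_0(0)$; non-conjugacy says $d(\exp_{p_0})_{v_0}$ is invertible, so $E(p,v)=(p,\exp_p v)$ is a local diffeomorphism near $(p_0,v_0)$. Writing its inverse as $(p,q)\mapsto(p,V(p,q))$ and setting $f(p,q)=|V(p,q)|_g$ (smooth since $v_0\neq0$), the inequality $d\le f$ is immediate because the corresponding geodesic stays in $M$, being $C^1$-close to $\gamma_0$. For $d\ge f$ near $(p_0,z_0)$ I would argue by contradiction: if $(p_j,q_j)\to(p_0,z_0)$ with $d(p_j,q_j)<f(p_j,q_j)$, choose $C^1$ minimizers $\sigma_j$; by Arzel\`a--Ascoli and continuity of $d$ a subsequence converges to a minimizing geodesic from $p_0$ to $z_0$, necessarily $\gamma_0$ by uniqueness, hence $\dot\sigma_j(0)\to\dot\gamma_0(0)$ and the initial vectors of the $\sigma_j$ converge to $v_0$; injectivity of $E$ near $(p_0,v_0)$ then forces those vectors to equal $V(p_j,q_j)$, so $d(p_j,q_j)=f(p_j,q_j)$, a contradiction. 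Thus $d=f$ on some $\upo\times\vzo$. Openness of $W_{p_0}$ uses the same compactness principle together with the fact that non-conjugacy makes $w_0=\dot\gamma_0(0)$ a regular point of $h$ (a Jacobi field vanishing at both ends produces a kernel vector of $dh_{w_0}$, because $h$ maps into $\p M$, which is transverse to $\dot\gamma_0(\ell_0)$); one then checks that for $w$ near $w_0$ the geodesic $\gamma_w$ is still minimizing up to, and the unique minimizer to, its exit point, so $h$ carries a neighborhood of $w_0$ diffeomorphically into $W_{p_0}$.

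The density of $W_{p_0}$ is the crux, and I would obtain it by showing its complement in $\p M$ lies in a null set. That complement is contained in $\{p_0\}\cup N\cup C$, where $N$ is the set of boundary points with at least two minimizing geodesics from $p_0$ and $C$ is the set of critical values of $h$. Now $N$ has measure zero because $z\mapsto d(p_0,z)$ is Lipschitz on $\p M$, hence differentiable a.e.\ by Rademacher, and at a point of differentiability the first-variation formula forces all minimizing geodesics ending there to share the same transversal exit direction, hence to coincide; and $C$ has measure zero by Sard's theorem, while the Jacobi-field computation above shows $C$ contains every boundary point conjugate to $p_0$ along a minimizer. Since every boundary point is joined to $p_0$ by some minimizing geodesic, a point outside $\{p_0\}\cup N\cup C$ has a unique, non-conjugate minimizer and so lies in $W_{p_0}$; thus $W_{p_0}$ contains the complement of a null set and is dense. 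I expect the main difficulty to be precisely the two boundary-specific points: setting up the exit map $h$ so that Sard can be applied \emph{on $\p M$} (a null set in $M$, where the ordinary conjugate and cut loci live, carries no information about $\p M$), and ruling out that nearby minimizers escape through the boundary prematurely — both controlled by strict convexity and the transversality it supplies.
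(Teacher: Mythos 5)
Your argument is correct and arrives at the same set $W_{p_0}$ as the paper (the complement in $\p M$ of $\{p_0\}$ and of the cut locus of $p_0$), and your joint-smoothness step is essentially the paper's: both invert $E(p,v)=(p,\exp_p v)$ at a non-conjugate minimizing vector and then rule out shorter competitors, you by an Arzel\`a--Ascoli contradiction, the paper by continuity of the cut distance on a strictly convex extension $\hat M\supset\supset M$ together with the no-shortcut property of that extension. Where you genuinely diverge is the density step, which is the crux. The paper proves density by a structure theorem for the cut locus (Theorem \ref{thm:cut_locus}, following Itoh--Tanaka and Ozols): the ``typical'' cut points form a smooth hypersurface that is \emph{transverse} to $\p M$ by strict convexity, while the conjugate and atypical cut points have Hausdorff dimension at most $n-2$; hence $\locus(p_0)\cap\p M$ has Hausdorff dimension at most $n-2$ and cannot contain an open subset of $\p M$. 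You instead work entirely on $\p M$: the multiply-minimized boundary points lie in the non-differentiability set of the Lipschitz function $d(p_0,\cdot)|_{\p M}$ (Rademacher plus first variation, using transversality of arrival to recover the full exit vector from its tangential part), and the conjugate boundary points are critical values of the exit map $h\colon w\mapsto\gamma_{p_0,w}(\exit(p_0,w))$ between $(n-1)$-manifolds (Sard, again using transversality to kill the $\dot\gamma\,d\tau$ term). This is shorter and avoids the entire cut-locus machinery of Section 3.2, at the cost of a weaker conclusion (a null set in $\p M$ rather than Hausdorff codimension one there); the paper needs the cut-locus structure anyway for the reconstruction arguments of Section 4, so for the paper the extra generality is not wasted.

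One technical point to nail down: your first-variation computation at a boundary point $z$ produces comparison curves obtained by varying the endpoint of the minimizer along $\p M$, and these curves can leave $M$ near $z$, so $d(p_0,\cdot)\le(\text{length})$ is not automatic. The clean fix is exactly the paper's Proposition \ref{prop:extension}: perform the variation in a slightly larger strictly convex $\hat M$ on which $d_{\hat M}=d_M$ holds for points of $\bar M$. Similarly, your openness argument is compressed; it should be spelled out as the standard Klingenberg-type compactness argument (two distinct minimizers to $z_j\to z_0$ would violate local injectivity of $\exp_{p_0}$ near $v_0$, and conjugacy would pass to the limit by continuity of $\det \mathrm{D}\exp_{p_0}$), but the ingredients you name suffice.
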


This result is the key of the proof of Theorem \ref{thm:main}.

\subsection{Critical distances, extensions and the cut locus}
In this section we consider a Riemannian manifold $(M,g)$ as in Theorem \ref{prop:dist_smoothness}, and study the properties of several critical distance functions. We define the \textit{exit time function}
\[
\exit\colon SM \to \R\cup\{\infty\}, \quad \exit(p,v)=\sup\{t>0: \gamma_{p,v}(t) \in M^{int}\},
\]
where $\gamma_{p,v}$ is the geodesic of $(M,g)$ with the initial conditions $(p,v)\in SM$.  Since the boundary of $M$ is strictly convex, $\exit(p,v)$ is the first time when the geodesic $\gamma_{p,v}$ hits the boundary, and $(-\exit(p,-v),\exit(p,v))$ is the maximal interval where the geodesic is defined. We do not assume that $\exit(p,v)<\infty$ for all $(p,v)\in SM$. That is, $(M,g)$ may have trapped geodesics.
Here we denote by $J \subset SM$ the set of all non-trapped directions, that are those $(p,v) \in SM$ for which $\exit(p,v) <\infty$. It is well known that on compact Riemannian manifolds with strictly convex boundary the set $J$ is open in $SM$, the exit time function $\exit$ is continuous in $J$, and smooth on $J\setminus T\dM$ (See for instance \cite[Chapter 4]{Shara}).

For any $p\in M$ we define a star shaped set 
\begin{equation}
    \label{eq:M_p_set}
M_p:=\left\{v \in T_pM: \: v=0, \: \text{ or } \|v\|_{g}\leq \exit\left(p,\frac{v}{\|v\|_{g}}\right)\right\}.
\end{equation} 
Thus $M_p$ is the largest subset of $T_pM$ where the exponential map of $p$
\[
\exp_p\colon M_p \to M, \quad \exp_p(v)=\gamma_{p,v}(1)
\]
is defined. Since $\p M$ is strictly convex this map is onto, but it does not need to be one-to-one, since there can be several geodesics of the same length connecting $p$ to some common point. This leads to the following definition of the \textit{cut distance} function:
\begin{equation}
\label{eq:cut_dist_func}
\cut \colon SM \to \R,\quad \cut(p,v)=
\sup\{t \in (0,\exit(p,v)] : \: d(p,\gamma_{p,v}(t))=t\}.
\end{equation}
Thus the geodesic segment $\gamma_{p,v}\colon [0,t] \to M$ is a distance minimizing curve for any $t\in[ 0,\cut(p,v)]$.

Traditionally on a closed Riemannian manifold $(N,g)$ the set 
\begin{equation}
    \label{eq:cut_loci}
\locus_{N}(p):=\{\gamma_{p,v}(\cut(p,v)) \in N:  \: v \in S_pN\}
\end{equation}
is known as the \textit{cut locus} of the point $p \in N$ and each point in this set is called a \textit{cut point} of $p$. Moreover, the cut locus of $p$ coincides with the closure of the set of those points $q \in N$ such that there is more than one distance minimizing geodesic from $p$ to $q$ (see for instance \cite[Theorem 2.1.14]{klingenberg}). It has been also shown in \cite[Section 9.1]{petersen2006riemannian} that $d(p,\cdot)$ is smooth in $N\setminus (\{p\}\cup\locus_N(p))$ but not at any $q \in (\{p\}\cup\locus_N(p))$. In order to understand the smoothness properties of the distance function on a Riemannian manifold $(M,g)$ with a strictly convex boundary, our aim is to define the set analogous to the one in \eqref{eq:cut_loci} in this context. 

If $N$ is a closed manifold and $(p,v)\in SN$ then by Klingenberg's lemma \cite[Proposition 10.32]{lee2018introduction} either there is a second distance minimizing geodesic from $p$ to $\gamma_{p,v}(\cut(p,v))$ or these points are conjugate to each other along $\gamma_{p,v}$. In particular, the geodesic $\gamma_{p,v}$ is not a distance minimizer beyond the interval $[0,\cut(p,v)]$. The following lemma extends this result in our case.

\begin{lemma}
Let Riemannian manifold $(M,g)$ be as in Theorem \ref{prop:dist_smoothness} and  $(p,v)\in SM$. If  
\[
\cut(p,v) <  \exit(p,v)
\]
then at least one of the following holds for $q:=\gamma_{p,v}(\cut(p,v))$: 
\begin{itemize}
\item There exists another distance minimizing geodesic from $p$ to $q$.
\item $q$ is the first conjugate point to $p$ along $\gamma_{p,v}$.
\end{itemize}

Moreover, for any $t_0\in (0,\cut(p,v))$ the geodesic segment $\gamma_{p,v}\colon [0,t_0]\to M$ has no conjugate points and is the unique unit-speed distance minimizing curve between its endpoints. 

\label{lem:cut_time}
\end{lemma}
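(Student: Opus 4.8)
The statement is the familiar Klingenberg-type dichotomy at the cut point, adapted to a manifold with strictly convex boundary; the novelty is only that we must rule out the cut point being on $\partial M$ and control everything intrinsically. The plan is to argue by contradiction along a minimizing-sequence/limiting-geodesic scheme, exactly as in the closed case (see \cite{lee2018introduction, klingenberg}), with the strict convexity of $\partial M$ invoked to keep the limiting geodesics in the interior away from their endpoints.

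First I would set $q=\gamma_{p,v}(\cut(p,v))$ and note that since $\cut(p,v)<\exit(p,v)$ the point $q$ lies in $M^{\text{int}}$ and $\gamma_{p,v}$ is defined, and minimizing, on all of $[0,\cut(p,v)]$; this last fact follows immediately from the definition of $\cut$ in \eqref{eq:cut_dist_func} together with continuity of $d(p,\gamma_{p,v}(\cdot))$. Choose $t_k\downarrow\cut(p,v)$ with $t_k<\exit(p,v)$ and set $q_k:=\gamma_{p,v}(t_k)$. By definition of the cut distance, $d(p,q_k)<t_k$, so for each $k$ there is a distance-minimizing geodesic $\sigma_k$ from $p$ to $q_k$ with $\sigma_k\neq\gamma_{p,v}|_{[0,t_k]}$ (as unit-speed curves). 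By \cite{bishop1974infinitesimal, bartolo2011convex} — the geodesic convexity quoted in the excerpt — each $\sigma_k$ stays in $M^{\text{int}}$ away from $p$; writing $w_k\in S_pM$ for its initial direction and passing to a subsequence, $w_k\to w\in S_pM$ and $d(p,q_k)\to\cut(p,v)$, so the limiting geodesic $\gamma_{p,w}$ is a unit-speed minimizer from $p$ to $q$ of length $\cut(p,v)$. Again by geodesic convexity this limiting minimizer lies in $M^{\text{int}}$ modulo its endpoints, so no boundary obstruction can occur; in particular $\gamma_{p,w}$ is defined past $q$. Now the standard dichotomy: either $w\neq v$, giving a second minimizing geodesic from $p$ to $q$ and we are done; or $w=v$, and then the classical short-cut argument (two nearby distinct minimizers producing a strictly shorter broken path unless the exponential map is singular) forces $q$ to be conjugate to $p$ along $\gamma_{p,v}$. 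One must still check $q$ is the \emph{first} conjugate point: if there were a conjugate point $\gamma_{p,v}(s)$ with $s<\cut(p,v)$, then $\gamma_{p,v}|_{[0,s']}$ would fail to be minimizing for $s'$ slightly larger than $s$, contradicting that $\gamma_{p,v}$ minimizes on $[0,\cut(p,v)]$; here I invoke the standard fact (Jacobi, Morse index theory) that a geodesic does not minimize past its first conjugate point.

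For the "Moreover" clause, fix $t_0\in(0,\cut(p,v))$. That $\gamma_{p,v}|_{[0,t_0]}$ is minimizing is immediate from the definition of $\cut$. For uniqueness and absence of conjugate points: if there were a second unit-speed minimizer $\tau$ from $p$ to $\gamma_{p,v}(t_0)$, concatenating $\tau$ with $\gamma_{p,v}|_{[t_0,\cut(p,v)]}$ would give a minimizing path from $p$ to $q$ that is not a smooth geodesic at $\gamma_{p,v}(t_0)$ (since $\tau'(t_0)\neq v$), hence could be shortened — contradicting $d(p,q)=\cut(p,v)$. Likewise a conjugate point in $[0,t_0]$ would prevent $\gamma_{p,v}|_{[0,t_0+\eps]}$ from minimizing for small $\eps>0$, again contradicting the definition of $\cut(p,v)>t_0$.

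The main obstacle is making the limiting argument rigorous \emph{with a boundary present}: one must ensure the minimizers $\sigma_k$ and their limit do not degenerate onto $\partial M$ (where geodesics need not be $C^\infty$ as minimizers in general manifolds, though here they exit immediately) and that the first-variation / short-cut constructions used to derive a contradiction can be carried out inside $M^{\text{int}}$. Strict convexity of $\partial M$, via the geodesic convexity statement quoted before Theorem \ref{thm:main}, is exactly what supplies this: every minimizing geodesic between interior-reachable points stays in $M^{\text{int}}$ except possibly at its endpoints, and here the endpoints $p$ and $q$ are controlled ($q\in M^{\text{int}}$ because $\cut(p,v)<\exit(p,v)$; the case $p\in\partial M$ is harmless since the geodesic leaves the boundary at once). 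With that in hand, every step reduces to the interior theory, where the closed-manifold proofs of \cite{klingenberg, lee2018introduction} apply verbatim.
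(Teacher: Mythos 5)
Your argument is correct and is essentially the paper's own approach: the paper simply observes that since $q=\gamma_{p,v}(\cut(p,v))$ is an interior point and the exit time function is continuous on the non-trapping part of $SM$, the proof of \cite[Proposition 10.32]{lee2018introduction} carries over verbatim, which is precisely the minimizing-sequence/limiting-direction dichotomy you spell out. Your use of geodesic convexity to keep the minimizers $\sigma_k$ and their limit in the interior is exactly the point that makes the closed-manifold proof applicable here.
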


\begin{proof}
Since the exit time function is continuous on the 
non-trapping part of $SM$ 
and $q$ is an interior point of $M$, the proof is identical to the proof of the analogous claim in \cite[Proposition 10.32]{lee2018introduction}. 
\end{proof}

Since the manifold $M$ has a non-empty boundary $\p M$ it holds that both the tangent bundle $TM$ and the unit sphere bundle $SM$ are manifolds with boundaries $\p TM $ and $\p SM$ respectively. 
\[
(p,v)\in \p TM, \: ((p,v)\in \p SM) \quad \hbox{ if and only if } \quad p \in \p M. 
\] 
We equip $TM$ with the Sasaki metric $g_S$. Thus we can consider $TM$, and its submanifold $SM$, as Riemannian manifolds. In the following the convergence and other metric properties in $TM$ or $SM$ will be considered with respect to this metric.

\begin{lemma}
\label{lem:cut_dis_is_cont}
Let Riemannian manifold $(M,g)$ be as in Theorem \ref{prop:dist_smoothness}. The cut distance function $\cut$ is continuous in $SM$.
\end{lemma}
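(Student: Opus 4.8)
The plan is to show continuity of $\cut$ at an arbitrary point $(p_0,v_0)\in SM$ by a standard upper/lower semicontinuity argument adapted to the boundary setting, using the known continuity of $\exit$ on the non-trapped set $J$ and Lemma \ref{lem:cut_time}. Fix a sequence $(p_j,v_j)\to(p_0,v_0)$ in $SM$ and write $c_j:=\cut(p_j,v_j)$, $c_0:=\cut(p_0,v_0)$. I first reduce to the case where all the $(p_j,v_j)$ lie in $J$: if $(p_0,v_0)$ is non-trapped then $J$ open forces $(p_j,v_j)\in J$ for large $j$ and $\exit(p_j,v_j)\to\exit(p_0,v_0)$; if $(p_0,v_0)$ is trapped, then $c_0=\exit(p_0,v_0)=\infty$ is excluded by the codomain $\R$, so in fact one must argue that $\cut$ is finite-valued — here I would note that $\cut(p,v)\le \mathrm{diam}(M,d)<\infty$ by compactness, so the trapped case still makes sense, and I would handle it by showing $c_j\to\infty$ is impossible while $c_j$ bounded leads to the semicontinuity argument below with $\exit$ replaced by $+\infty$ in the relevant inequalities.

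\textbf{Upper semicontinuity.} Passing to a subsequence, assume $c_j\to L\in[0,\infty]$; I want $L\le c_0$. Since $\gamma_{p_j,v_j}$ minimizes on $[0,c_j]$, we have $d(p_j,\gamma_{p_j,v_j}(t))=t$ for all $t\le c_j$. The geodesic flow depends smoothly (hence continuously) on initial data on $J\setminus T\p M$, and $d$ is continuous on $M\times M$, so passing to the limit gives $d(p_0,\gamma_{p_0,v_0}(t))=t$ for every $t<L$ with $t\le\exit(p_0,v_0)$; by definition of $\cut$ this yields $L\le c_0$ (one must check $L\le\exit(p_0,v_0)$, which follows from $c_j\le\exit(p_j,v_j)\to\exit(p_0,v_0)$ when $(p_0,v_0)\in J$, and is automatic otherwise).

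\textbf{Lower semicontinuity.} This is the harder direction and the main obstacle. Suppose for contradiction that (after passing to a subsequence) $c_j\to L<c_0$. Set $q_0:=\gamma_{p_0,v_0}(c_0)$ and $q_j:=\gamma_{p_j,v_j}(c_j)\to \gamma_{p_0,v_0}(L)=:q^\ast$, an interior point with $L<c_0\le\exit(p_0,v_0)$. By Lemma \ref{lem:cut_time} applied to $(p_j,v_j)$, for each $j$ with $c_j<\exit(p_j,v_j)$ either there is a second minimizing geodesic from $p_j$ to $q_j$, or $q_j$ is the first conjugate point to $p_j$ along $\gamma_{p_j,v_j}$. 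I split into these two cases (infinitely many $j$ fall into one of them). In the conjugate-point case, the conjugate-point condition is closed under limits of initial data (the relevant Jacobi field equation and its solutions vary continuously), so $q^\ast$ would be conjugate to $p_0$ along $\gamma_{p_0,v_0}$ at parameter $L<c_0$ — contradicting the ``no conjugate points on $[0,t_0]$ for $t_0<\cut$'' clause of Lemma \ref{lem:cut_time}. In the second-minimizer case, let $w_j\in S_{p_j}M$ be the initial direction of the competing minimizer, so $\exp_{p_j}(c_jw_j)=q_j$ and, passing to a subsequence, $w_j\to w^\ast\in S_{p_0}M$; by continuity $\exp_{p_0}(Lw^\ast)=q^\ast$ and $d(p_0,q^\ast)=L$, so $\gamma_{p_0,w^\ast}|_{[0,L]}$ is a minimizing geodesic from $p_0$ to $q^\ast$. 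If $w^\ast\ne v_0$ this is a second minimizer from $p_0$ to the interior point $q^\ast=\gamma_{p_0,v_0}(L)$ with $L<c_0$, again contradicting the uniqueness clause of Lemma \ref{lem:cut_time}. The remaining possibility $w^\ast=v_0$ requires care: one must rule out that the competing directions collapse onto $v_0$. I would handle this by a first-variation / shortcut argument: if $w_j\ne v_j$ but $w_j\to v_0$, the two distinct minimizing geodesics from $p_j$ to $q_j$ must separate, and concatenating sub-arcs produces a broken geodesic of the same length $c_j$ that can be shortened near the breakpoint (the breakpoint being interior, since $\p M$ is strictly convex and geodesics exit immediately), contradicting minimality of $c_j$ — this is exactly the classical Klingenberg-type argument and is where the strict convexity of $\p M$ is used to keep everything in $M^{\mathrm{int}}$. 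Combining all cases, $L<c_0$ is impossible, so $\liminf_j c_j\ge c_0$, and together with upper semicontinuity, $\cut$ is continuous at $(p_0,v_0)$.
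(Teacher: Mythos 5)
Your overall strategy is exactly the one the paper intends: the paper's proof of Lemma \ref{lem:cut_dis_is_cont} simply defers to the standard semicontinuity argument for closed manifolds (Lee, Theorem 10.33; Klingenberg, Lemma 2.1.5), adapted using the continuity of $\exit$ on the non-trapped set. Your upper-semicontinuity step, the reduction via finiteness of $\cut$ (bounded by $\diam(M)$), and the first two branches of the lower-semicontinuity argument (conjugacy is closed under limits; a genuinely different limit direction $w^\ast\neq v_0$ contradicts the uniqueness clause of Lemma \ref{lem:cut_time}) are all sound and match the cited proofs.

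There is, however, a genuine gap at the one place you flag as requiring care: the collapsing case $w_j\to v_0$. Your proposed fix does not work as described. Having two distinct unit-speed minimizers $\gamma_{p_j,v_j}$ and $\gamma_{p_j,w_j}$ of the same length $c_j$ between the same endpoints does not contradict the minimality of either one, and there is no natural ``concatenation of sub-arcs'' joining $p_j$ to $q_j$ with a shortenable corner: the two curves only meet at their endpoints, and the only broken geodesic you can form (follow one to $q_j$, then continue past $q_j$ along the other) merely re-proves that $\gamma_{p_j,v_j}$ stops minimizing at $c_j$, which is the definition of $c_j$. The shortcut argument is what proves the dichotomy of Lemma \ref{lem:cut_time} in the first place; it cannot be recycled to rule out the collapse. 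The correct resolution is the one used in Lee and in Cheeger--Ebin: since $L<c_0\leq\con(p_0,v_0)$, the differential $\mathrm{D}\exp_{p_0}$ is nonsingular at $Lv_0$, so the map $F(x,w)=(x,\exp_x(w))$ is a diffeomorphism on a neighborhood of $(p_0,Lv_0)$ in $TN$ (work in the closed extension $N$ of Proposition \ref{prop:extension} so that $F$ is defined on an open set); for large $j$ both $(p_j,c_jv_j)$ and $(p_j,c_jw_j)$ lie in that neighborhood and have the same image $(p_j,q_j)$, forcing $w_j=v_j$, a contradiction. You should also say a word about the indices $j$ for which $c_j=\exit(p_j,v_j)$, where the dichotomy of Lemma \ref{lem:cut_time} is unavailable: if this holds along a subsequence then continuity of $\exit$ on $J$ gives $L=\exit(p_0,v_0)\geq c_0$, which is already incompatible with $L<c_0$, so these indices cause no harm. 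With these two repairs your proof is complete and coincides with the paper's intended argument.
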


\begin{proof}
Since the exit time function is continuous
on the non-trapping part of $SM$,
the proof of this claim is almost identical to the proof of the analogous claims in \cite[Theorem 10.33]{lee2018introduction} and \cite[Lemma 2.1.5]{klingenberg}.
\end{proof}

As $M$ has a boundary, the definition of the cut time function $\cut$, in the equation \eqref{eq:cut_dist_func}, has an issue. Namely if $\cut(p,v)=\exit(p,v)$ for some $(p,v)\in SM$ we do not know \textit{a priori} if the geodesic $\gamma_{p,v}$ just hits the boundary at $\gamma_{p,v}(\cut(p,v))$ or if it is possible to find an extension of $(M,g)$ such that $\gamma_{p,v}$ also extends as a distance minimizer. 

To address this question, from here onwards we assume that $(M,g)$ has been isometrically embedded in some closed Riemannian manifold $(N,g)$. This can be done for instance by constructing the double of the manifold $M$ as explained in \cite[Example 9.32]{lee2013smooth} and extending the metric $g$ smoothly across the boundary $\p M$. The issue with this extension is that it might create `short cuts' in the sense that there can be a curve in $N$, connecting some points of $M$, which is shorter than any curve entirely contained in $M$. Therefore we always have
\[
d_{M}(p,q)\geq d_{N}(p,q), \quad \hbox{ for all } p,q \in M,
\]
where $d_{M}(\cdot,\cdot)$ and $d_N(\cdot,\cdot)$ are the distance functions of $M$ and $N$ respectively. The following proposition shows that while we stay close enough to $M$ we do not need to worry about these short cuts.

\begin{proposition}
\label{prop:extension}
Let $(N,g)$ be a smooth, connected, orientable, and closed Riemannian manifold and $M\subset N$ an open set whose boundary $\p M$ is a smooth strictly convex hyper-surface of $(N,g)$. There exists an open subset $\hat{M}$ of $N$, that contains the closure of $M,$ and whose boundary is a smooth, strictly convex hyper-surface of $N$. 

Moreover
\begin{equation}
    \label{eq:dist_agree}
d_{\hat{M}}(p,q)=d_{M}(p,q), \quad \text{ for all } p,q \in \bar M.
\end{equation}
\end{proposition}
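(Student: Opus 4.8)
The plan is to construct $\hat M$ by pushing the boundary $\p M$ outward a small distance along the inward-pointing normal of $N \setminus M$, i.e. along geodesics of $N$ normal to $\p M$ that leave $M$. Concretely, let $\nu$ be the unit normal field on $\p M$ pointing out of $M$, and for small $\eps > 0$ set $\hat M_\eps := M \cup \{\exp^N_x(t\nu(x)) : x \in \p M,\ 0 \le t < \eps\}$. For $\eps$ small this is an open neighborhood of $\bar M$ whose boundary $\p\hat M_\eps = \{\exp^N_x(\eps\nu(x)) : x \in \p M\}$ is an embedded smooth hypersurface diffeomorphic to $\p M$ via the normal exponential map (this uses compactness of $\p M$ and smoothness of $\exp^N$; the map $(x,t)\mapsto \exp^N_x(t\nu(x))$ is a diffeomorphism from $\p M \times [0,\eps)$ onto a collar for $\eps$ small). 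The first technical point is that strict convexity of $\p\hat M_\eps$ holds for $\eps$ small: the second fundamental form of the level sets $\{t = \text{const}\}$ of the normal distance function varies smoothly in $t$ (it satisfies a Riccati equation), so since $\Pi$ is negative definite at $t=0$ on the compact hypersurface $\p M$, it remains negative definite for $t$ in a uniform interval $[0,\eps)$. I would state this as a lemma or cite the standard Riccati-equation computation (e.g. from comparison geometry).

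The substantive claim is the distance equality \eqref{eq:dist_agree}. Since $M \subset \hat M$ we automatically have $d_{\hat M}(p,q) \le d_M(p,q)$ for $p,q \in \bar M$, so the work is the reverse inequality $d_{\hat M}(p,q) \ge d_M(p,q)$; equivalently, a distance-minimizing curve in $\hat M$ between two points of $\bar M$ may be replaced by one of no greater length lying in $\bar M$. Fix $p,q \in \bar M$ and let $\gamma$ be a minimizing geodesic of $\hat M$ from $p$ to $q$ — which exists and lies in $\hat M^{int}$ modulo endpoints, because $\p\hat M$ is strictly convex so $(\hat M, g)$ is geodesically convex by the Bishop / Bartolo–Germinario–Sánchez result quoted in the introduction. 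The idea is that $\gamma$ cannot stray into the thin collar $\hat M \setminus \bar M$: any maximal subarc of $\gamma$ with interior in the open collar has both endpoints on $\p M$, and by strict convexity of $\p M$ (from the collar side — note the collar lies on the concave side of $\p M$) such a geodesic arc is \emph{longer} than the minimizing geodesic of $N$, hence of $\bar M$, between those two endpoints. Wait: more carefully, I would argue that a geodesic of $\hat M$ entering the collar through a point of $\p M$ must, by strict convexity of $\p M$ as seen from inside $M$, immediately exit $M$ — but then since the collar is so thin and $\p M$ curves away, the arc in the collar is strictly longer than a competing path hugging $\p M$ inside $M$. Making this precise is the crux: I would show that for $\eps$ small enough (uniformly, using compactness of $\p M$ and bounds on curvature and on the second fundamental form), every geodesic segment of $\hat M$ that starts and ends on $\p M$ with interior in the collar has length strictly greater than the $d_M$-distance between its endpoints. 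Replacing each such excursion by the corresponding minimizer in $\bar M$ yields a curve from $p$ to $q$ in $\bar M$ that is no longer than $\gamma$, giving $d_M(p,q) \le \ell(\gamma) = d_{\hat M}(p,q)$.

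The main obstacle I anticipate is the uniformity of the choice of $\eps$: one must rule out geodesics of $\hat M$ that make long, shallow excursions into the collar — grazing $\p M$ almost tangentially and traveling a long way before returning — since for such a geodesic the "shortcut" through $M$ is not obviously shorter a priori. The resolution should come from strict convexity: a unit-speed geodesic of $N$ tangent to $\p M$ at a point, or entering the collar, is pushed back toward $M$ at a rate controlled below by the (compactly attained) maximum eigenvalue of the second fundamental form, so there is a uniform $\eps_0$ and a uniform bound such that any collar excursion of a geodesic has length controlled by, and exceeding, the chord it subtends inside $\bar M$; this is exactly the kind of estimate underlying the fact that strictly convex boundaries admit no trapped boundary-to-boundary geodesics staying near $\p M$. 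I would package this as: choose $\eps$ so small that the collar $\hat M\setminus \bar M$ contains no geodesic segment of $\hat M$ joining two points of $\p M$ whose length is $\le d_M$ of its endpoints — possible by a compactness/contradiction argument on the space of unit vectors along $\p M$ pointing into the collar — and then the replacement argument of the previous paragraph finishes the proof.
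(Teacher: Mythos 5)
Your construction of $\hat M$ is the same as the paper's: push $\p M$ outward along the normal field and use continuity of the second fundamental form of the level sets of the signed distance function to keep strict convexity for small $\eps$. That part is fine.

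The gap is in the distance equality, and it sits exactly where you flag "the crux." Your scheme requires the claim that every geodesic excursion into the collar, between two points of $\p M$, is strictly longer than the $d_M$-distance of its endpoints, uniformly for small $\eps$; you do not prove this, and the compactness/contradiction argument you gesture at is genuinely delicate (degenerating excursions, grazing tangencies, and the fact that $d_N\le d_M$ means "longer than the $N$-minimizer" does not give "longer than the $M$-minimizer" -- an error you half-catch mid-paragraph but never repair). The missing idea, which is how the paper closes this step, is a one-line maximum principle that makes the whole excursion-replacement machinery unnecessary: let $s$ be the signed distance to $\p M$ (positive outside $M$) and let $\gamma$ be a minimizing geodesic of $\hat M$ between $p,q\in\bar M$. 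If $\tilde s=s\circ\gamma$ attained an interior maximum $m>0$ at $t_0$ inside the collar, then $\dot{\tilde s}(t_0)=0$, so $\dot\gamma(t_0)$ is tangent to the level set $\Omega(m)$, and the Weingarten equation gives
\begin{equation*}
\ddot{\tilde s}(t_0)=\langle D_t\grad s(\gamma(t_0)),\dot\gamma(t_0)\rangle_g=-\Pi_{\gamma(t_0)}\bigl(\dot\gamma(t_0),\dot\gamma(t_0)\bigr)>0,
\end{equation*}
since every level set $\Omega(m)$, $0\le m<\eps$, is strictly convex. This contradicts $\ddot{\tilde s}(t_0)\le 0$ at an interior maximum. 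Hence $\tilde s\le 0$ throughout, i.e.\ $\gamma$ never leaves $\bar M$, there are no excursions at all, and $d_{\hat M}(p,q)=\ell(\gamma)\ge d_M(p,q)$ immediately. You should replace your excursion-length comparison with this argument; as written, the reverse inequality is asserted but not established.
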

\begin{proof}
Since $\p M$ is a smooth hyper-surface of $N$ there exists a smooth function $s\colon N \to \R$ and a neighborhood $U$ of $\p M$ such that
\[
|s(x)|=\text{dist}(x,\p M):=\inf\{d_N(x,z): \: z \in \p M\}, \quad \text{and} \quad  \|\text{grad}\; s(x)\|_g\equiv 1,
\]
for every $x \in U$. Moreover for each $x \in U$ there exists a unique $z \in \p M$ such that
$
d_N(x,z)=|s(x)|.
$
We choose the sign convention of $s$ such that $s(x)\geq 0$ for $x \in U \setminus M$. Then on $\p M$ the gradient of the function $s(\cdot)$ agrees with the outward pointing unit normal vector field of $\p M$. The existence of this function is explained for instance in \cite[Example 6.43]{lee2018introduction}.

By this construction, each $p\in U$ can be written uniquely as
\[
p=(z(p),s(p))\in \p M \times \R,
\]
where $z(p)$ is the closest point of $\p M$ to $p$. Thus on $U$ we write the Riemannian metric as a function of $(z,\eps)\in  \p M \times \R$ in the form
$
\mathrm{ds}^2+\tilde{g}(\eps,z),
$
where $\tilde{g}(\eps,z)$ is the first fundamental form of the smooth hyper-surface 
$
\Omega(\eps):=s^{-1}\{\eps\}.
$
By \cite[Proposition 8.18]{lee2018introduction} we can then write the second fundamental form of $\Omega(\eps)$
as a bi-linear form 
\[
\Pi_{(z,\eps)}(X,Y)=-\frac{1}{2}\frac{\p}{\p \eps}\tilde{g}_{\alpha\beta}(\eps,z) X^\alpha Y^\beta\in \R
\]
on $T\Omega(s)$. Thus the eigenvalues $\lambda_1(z,\eps), \ldots,  \lambda_{n-1}(z,\eps)$ of $\Pi_{(z,\eps)}$ are continuous functions of $(z,\eps)$ \cite[Appendix V, Section 4, Theorem 4A]{whitney1972complex}. Since $\Omega(0)$ coincides with $\p M$, which is strictly convex, we have that $\lambda_{\alpha}(z,0)<0$ for every $\alpha \in \{1,\ldots,n-1\}$. Thus there exists $\eps_0>0$ so that 
\[
\lambda_{\alpha}(z,\eps)<0, \quad \text{for every $\alpha \in \{1,\ldots,n-1\}$ and $|\eps|<\eps_0$}.
\]
Therefore, for small enough 
$\eps>0$,
we have that
\[
M(\eps):=s^{-1}(-\infty,\eps)\subset M \cup U
\]
is an open set of $N$ that contains $\bar M$, and whose boundary
$
\p M(\eps)=\Omega(\eps)
$
is a smooth strictly convex hyper-surface of $N$. We choose $\eps\in (0,\eps_0)$ and set $\hat{M}=M(\eps)$.

\medskip

Let $p,q \in \bar M$ and choose a distance minimizing unit speed geodesic $\gamma\colon [0,d_{\hat M}(p,q)] \to \hat M$ that connects these points. Now without loss of generality we may assume that $\gamma(\tilde{t}) \in U$ for some $\tilde{t} \in [0,d_{\hat M}(p,q)]$. If this is not true then the trace of $\gamma$ is contained in $M$ and we are done.

Since $U$ is open and $\gamma(\tilde t)\in U$ we can choose an interval $[a,b]\subset [0,d_{\hat M}(p,q)]$ such that $\gamma([a,b])\subset U$ and define a smooth function
\[
\tilde{s} \colon [a,b] \to \R, \quad \tilde{s}(t):= s(\gamma(t)).
\]
Since $p$ and $q$ are in $\bar M$ we may without loss of generality assume that 
$
\tilde{s}(a),\tilde{s}(b)\leq 0.
$

We  aim to verify that $\tilde s$ is always non-positive. To establish this we show that the maximum value $m\in \R$ of $\tilde{s}$ is attained at the endpoints of the domain interval. So suppose that $m=\tilde{s}(t_0)$ is attained in some interior point $t_0\in (a,b)$. As $t_0$ is a maximum point of $\tilde{s}$, laying in the interior of the domain interval, it must hold that $\dot{\tilde{s}}(t_0)=0$ and $\ddot{\tilde{s}}(t_0)\leq 0$. On the other hand since $\gamma$ is a geodesic, we have by Weingarten equation \cite[Theorem 8.13 (c)]{lee2018introduction} that
\begin{equation}
    \label{eq:der_test}
\dot{\tilde s}(t_0)=\langle \grad s(\gamma(t_0)), \dot{\gamma}(t_0)\rangle_g, \: \text{ and } \:  \ddot{\tilde s}(t_0)=\langle D_t\grad s(\gamma(t_0)), \dot{\gamma}(t_0)\rangle_g=-\Pi_{\gamma(t_0)}(\dot \gamma(t_0),\dot{\gamma}(t_0)).
\end{equation}
Here $D_t$ stands for the covariant differentiation along the curve $\gamma$. Therefore $\dot{\gamma}(t_0)$ is tangential to the strictly convex hyper-surface $\Omega(m)$ which implies that 
$
\Pi_{\gamma(t_0)}(\dot \gamma(t_0),\dot{\gamma}(t_0))<0.
$
This in conjunction with \eqref{eq:der_test} leads into a contradiction with $\ddot{\tilde{s}}(t_0)\leq 0$. We have verified that for all $p,q \in \bar M$ any distance minimizing geodesic in $\hat{M}$, between these points, is contained in $\bar M$. Therefore the equation \eqref{eq:dist_agree} is true.
\end{proof}

By Proposition \ref{prop:extension} we may assume that $M$ is contained in the interior of some compact, Riemannian manifold $(\hat M,g)$ with a smooth strictly convex boundary. Moreover the distance function of $\hat M$ restricts to the one of $M$. 
Thus for every $(p,v)\in S\hat M$ where $p$ is in $M$ we always have that
\begin{equation}
    \label{eq:order_of_critical_funcs}
\cut(p,v)\leq \hat \cut(p,v), \quad \text{ and } \quad \exit(p,v)<\hat \exit(p,v),
\end{equation}
where $\hat \cut$ and $\hat \exit$ are the cut distance and the exit time functions of $(\hat M, g)$ respectively. 
Motivated by this observation we define the cut locus of a point $p \in M$ as
\begin{equation}
    \label{eq:cut_locus_real_def}
\begin{split}
\locus(p):=&\{\gamma_{p,v}(\cut(p,v))\in M: \: v \in S_pM,   
\:\cut(p,v)=\hat \cut(p,v)\}.
\end{split}
\end{equation}
The following result summarizes the basic properties of these sets. 

\begin{proposition}
\label{Prop:cut_locus}
Let Riemannian manifold  $(M,g)$ be as in Theorem \ref{prop:dist_smoothness}. Let $p \in M$.
\begin{itemize}
\item

The cut locus $\locus(p)$ of the point $p$ is a closed set of measure zero.

\item

If $q \in \locus(p)$ and $\gamma$ is a unit speed distance minimizing geodesic of $M$ between $p$ and $q$ then at least one of the following holds:
\begin{enumerate}
\item There exists another distance minimizing geodesic from $p$ to $q$.
\item $q$ is the first conjugate point to $p$ along $\gamma$.
\end{enumerate}
\end{itemize}
\end{proposition}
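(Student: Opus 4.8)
The plan is to transfer the known facts about cut loci on closed manifolds, together with Lemma~\ref{lem:cut_time}, to the manifold-with-boundary setting by working inside the extended manifold $\hat M$ furnished by Proposition~\ref{prop:extension}. The key structural observation is that, by \eqref{eq:order_of_critical_funcs} and \eqref{eq:cut_locus_real_def}, a point $q = \gamma_{p,v}(\cut(p,v))$ lies in $\locus(p)$ precisely when $\cut(p,v) = \hat\cut(p,v)$, i.e.\ when the $M$-cut point of $\gamma_{p,v}$ coincides with its $\hat M$-cut point; in that case $q$ is an \emph{interior} point of $M$ (since $\cut(p,v) \le \exit(p,v) < \hat\exit(p,v)$ forces $q \in M^{\mathrm{int}}$), and it is a genuine cut point of $p$ in the closed manifold $\hat M$. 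Consequently $\locus(p) \subset \locus_{\hat M}(p) \cap M^{\mathrm{int}}$, and for interior points the $M$-distance and $\hat M$-distance agree by \eqref{eq:dist_agree}, so the local geometry seen by $q$ is exactly the closed-manifold geometry.

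\textbf{The dichotomy.} For the second bullet, suppose $q \in \locus(p)$ and $\gamma$ is a unit-speed distance-minimizing geodesic of $M$ from $p$ to $q$. By the paragraph above, $q$ is an interior point and $\cut(p,v) = \hat\cut(p,v)$ where $v = \dot\gamma(0)$. If $\cut(p,v) < \exit(p,v)$, apply Lemma~\ref{lem:cut_time} directly: either there is a second distance-minimizing geodesic from $p$ to $q$, or $q$ is the first conjugate point to $p$ along $\gamma$. If instead $\cut(p,v) = \exit(p,v)$, then since $\exit(p,v) < \hat\exit(p,v)$ and $\cut(p,v) = \hat\cut(p,v)$, we have $\hat\cut(p,v) < \hat\exit(p,v)$, so the segment $\gamma$ extends strictly inside $\hat M$ and we invoke Klingenberg's lemma (\cite[Proposition~10.32]{lee2018introduction}) on the closed manifold $\hat M$ at the point $q = \gamma_{p,v}(\hat\cut(p,v))$; using again that distances in $M$ and $\hat M$ agree on $\bar M$, a second $\hat M$-minimizer or a conjugate point transfers back to $M$. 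Either way the stated alternative holds.

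\textbf{Closedness and measure zero.} For the first bullet, closedness follows from continuity: by Lemma~\ref{lem:cut_dis_is_cont} the map $S_pM \ni v \mapsto \gamma_{p,v}(\cut(p,v)) \in M$ is continuous on the compact set $S_pM$, but $\locus(p)$ is only the image of the closed subset $\{v : \cut(p,v) = \hat\cut(p,v)\}$; one must check this subset is closed, which follows because $\cut$ and $\hat\cut$ are both continuous (Lemma~\ref{lem:cut_dis_is_cont} applied to $M$ and to $\hat M$), so $\{v : \cut(p,v) = \hat\cut(p,v)\}$ is closed, hence compact, hence its continuous image $\locus(p)$ is closed in $M$. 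For the measure-zero claim, it suffices to note $\locus(p) \subset \locus_{\hat M}(p)$, and the cut locus of a point in a closed manifold has measure zero (this is standard; see e.g.\ the discussion around \cite[Section~9.1]{petersen2006riemannian}, or argue that $d_{\hat M}(p,\cdot)$ is smooth off $\{p\}\cup\locus_{\hat M}(p)$ and that the complement has full measure via Sard's theorem applied to $\exp_p$ restricted to the star-shaped domain cut off at the cut distance).

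\textbf{Main obstacle.} The delicate point is the boundary case $\cut(p,v) = \exit(p,v)$: here the naive reading of Lemma~\ref{lem:cut_time} does not apply, and one genuinely needs the extension $\hat M$ and the distance-agreement \eqref{eq:dist_agree} to relocate the argument to a closed manifold where Klingenberg's lemma is available; verifying that the second minimizer or conjugate point produced in $\hat M$ actually lies over $M$ (so that it gives information about geodesics \emph{in} $M$) requires care, and rests on the fact — provided by Proposition~\ref{prop:extension} — that $\hat M$-minimizers between points of $\bar M$ stay in $\bar M$. The closedness of the index set $\{v : \cut(p,v) = \hat\cut(p,v)\}$ is a secondary technical point that one should not skip.
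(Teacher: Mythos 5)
Your proposal follows essentially the same route as the paper: the chain $\cut(p,v)=\hat\cut(p,v)\le\exit(p,v)<\hat\exit(p,v)$ places $q$ strictly before the exit time of the extension $\hat M$, Lemma \ref{lem:cut_time} applied to $\hat M$ gives the dichotomy there, and Proposition \ref{prop:extension} pulls a second minimizer back into $M$; closedness follows from continuity of $\cut$ and $\hat\cut$ on the compact set $S_pM$, and measure zero from the Fubini/Sard argument for the image of the graph of $\cut$ under $\exp_p$. Two misstatements should be corrected, although neither is load-bearing. First, $q$ need \emph{not} lie in $M^{\mathrm{int}}$: when $\cut(p,v)=\exit(p,v)$ --- exactly your second case --- the cut point lies on $\p M$, and the inequality $\exit(p,v)<\hat\exit(p,v)$ only places $q$ in the interior of $\hat M$; your parenthetical deduction contradicts the case analysis you then carry out. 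Second, $\hat M$ is not a closed manifold but a compact manifold with strictly convex boundary, so the tool in the boundary case is not Klingenberg's lemma for closed manifolds but Lemma \ref{lem:cut_time} applied to $\hat M$; for the same reason the measure-zero claim cannot be obtained by citing the closed-manifold cut-locus theorem for $\locus_{\hat M}(p)$, and your alternative (the set $\{\cut(p,v)v : v\in S_pM\}$ has measure zero in $T_pM$ by Fubini and $\exp_p$ extends smoothly to $T_pN$) is the argument to keep.
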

\begin{proof}
\noindent
\begin{itemize}
\item 
The proof of the first claim is identical to the proof of \cite[Theorem 10.34 (a)]{lee2018introduction}, thus omitted here.      



\item
If  $q \in \locus(p)$ then by \eqref{eq:order_of_critical_funcs} there is $v \in S_pM$ such that
\[
q=\gamma_{p,v}(\cut(p,v)),  \quad \cut(p,v)=\hat\cut(p,v)\leq \exit(p,v)<\hat \exit(p,v),
\]
Thus Lemma \ref{lem:cut_time} and Proposition \ref{prop:extension} yield the second claim.
\end{itemize}
\end{proof}

The following result introduces an open and dense subset of $M$ where the distance function of an interior point is smooth.
\begin{lemma}
\label{lem:outside_cut_locus}
Let Riemannian manifold  $(M,g)$ be as in Theorem \ref{prop:dist_smoothness}. Let $p \in M$. The distance function
$
d(p,\cdot) \colon M \to \R, 
$
is smooth precisely in the open and dense set  $M\setminus (\{p\} \cup \locus(p))$.
\end{lemma}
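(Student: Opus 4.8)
The plan is to prove the two assertions separately: that $d(p,\cdot)$ fails to be smooth on $\{p\}\cup\locus(p)$, and that it is smooth everywhere else.

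\textbf{Smoothness outside the bad set.} Let $q\in M\setminus(\{p\}\cup\locus(p))$. First I would handle the case $q\in M^{\mathrm{int}}$. By Proposition \ref{prop:extension} we work inside the slightly larger manifold $\hat M$, in whose interior $q$ lies together with a minimizing geodesic from $p$. Because $q\notin\locus(p)$, for the direction $v$ realizing a minimizer we have $\cut(p,v)$ strictly less than the radius at which $q$ sits, OR $\cut(p,v)=\hat\cut(p,v)$ fails; in either case the argument of \cite[Section 9.1]{petersen2006riemannian} for closed manifolds applies locally: there is a unique minimizing geodesic $\gamma_{p,v}$ to $q$, it has no conjugate point at $q$ (so $\exp_p$ is a local diffeomorphism near the preimage of $q$), and $d(p,\cdot)=\|\exp_p^{-1}(\cdot)\|_g$ in a neighborhood of $q$, which is smooth since $q$ is away from the origin. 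The key local input I would invoke is Lemma \ref{lem:cut_time} (no conjugate points and uniqueness strictly before the cut time) together with continuity of $\cut$ from Lemma \ref{lem:cut_dis_is_cont}, so that a whole neighborhood of $q$ stays below the cut value along nearby directions. For $q\in\p M$, since $q\notin\locus(p)$ and $q$ is hit by a minimizing geodesic that exits immediately, I would observe that $q$ is not a cut point of $p$ relative to $\hat M$ either (using \eqref{eq:order_of_critical_funcs} and the definition \eqref{eq:cut_locus_real_def}), so $q$ lies in the smooth locus of $d_{\hat M}(p,\cdot)$, and by \eqref{eq:dist_agree} this function agrees with $d_M(p,\cdot)$ near $q$ on $\bar M$; hence $d_M(p,\cdot)$ is smooth at $q$ in the sense of restriction of a smooth function on $\hat M$.

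\textbf{Non-smoothness on the bad set.} At $q=p$, $d(p,\cdot)$ is not differentiable because it behaves like the norm near the origin in normal coordinates — the standard argument. At $q\in\locus(p)$, I would use Proposition \ref{Prop:cut_locus}: either there are two distinct minimizing geodesics from $p$ to $q$, or $q$ is the first conjugate point along the unique one. In the first case, the two geodesics have different velocities at $q$, so the one-sided directional derivatives of $d(p,\cdot)$ along a direction separating the two incoming unit covectors disagree (first variation formula), precluding differentiability. In the conjugate-point case, $\exp_p$ is singular at $\exp_p^{-1}(q)$, and one shows $d(p,\cdot)$ cannot be $C^1$ there by the classical argument that a $C^1$ distance function would force $\exp_p$ to be a submersion along the relevant radial direction, contradicting vanishing of the Jacobian; again this is exactly \cite[Section 9.1]{petersen2006riemannian} transplanted to $\hat M$ and then restricted, valid because $q\in M^{\mathrm{int}}$ by \eqref{eq:cut_locus_real_def}.

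\textbf{Density and openness.} Openness of $M\setminus(\{p\}\cup\locus(p))$ is immediate from closedness of $\locus(p)$ (Proposition \ref{Prop:cut_locus}, first bullet) and of $\{p\}$. Density follows because $\{p\}\cup\locus(p)$ has measure zero (same proposition), hence empty interior.

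\textbf{Main obstacle.} I expect the delicate point to be the boundary case $q\in\p M\setminus\locus(p)$: one must be careful that ``smooth at a boundary point'' means smooth as a function that extends smoothly across $\p M$, and that the extension furnished by Proposition \ref{prop:extension} genuinely realizes $d_M(p,\cdot)$ near $q$ and not merely on $\bar M$. Checking that no minimizing geodesic of $\hat M$ from $p$ to a point near $q$ leaves $\bar M$ — so that $d_{\hat M}=d_M$ in a full $N$-neighborhood of $q$ — uses the strict convexity argument already established in the proof of Proposition \ref{prop:extension}, and tying these pieces together cleanly is the part that requires the most care.
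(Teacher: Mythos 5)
Your proposal takes the same route the paper does: the paper's proof consists of citing the closed-manifold argument of Petersen verbatim, the point being that the definition \eqref{eq:cut_locus_real_def} of $\locus(p)$ via the extension $\hat M$ of Proposition \ref{prop:extension} is engineered precisely so that this argument transplants, and your write-up is a correct fleshing-out of that. Two small slips, neither of which damages the structure: first, the inequality in your opening paragraph is reversed --- for a minimizing direction $v$ with $\gamma_{p,v}(t)=q$ one always has $t\leq\cut(p,v)$, so $q\notin\locus(p)$ gives $t<\cut(p,v)$ \emph{or} $\cut(p,v)<\hat\cut(p,v)$, not ``$\cut(p,v)$ strictly less than the radius of $q$.'' Second, \eqref{eq:cut_locus_real_def} does \emph{not} place cut points in $M^{\mathrm{int}}$; $\locus(p)$ can meet $\p M$ (otherwise Theorem \ref{thm:cut_locus}\eqref{C4} would be vacuous), so the non-smoothness argument at a boundary cut point $q$ should instead be justified by noting that the condition $\cut(p,v)=\hat\cut(p,v)$ makes $q$ a genuine cut point of $p$ in $\hat M$, where it is interior, and then restricting.
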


\begin{proof}
The proof is identical to the proof of the analogous claim of \cite[Chapter 5, Section 9, Corollary 7]{petersen2006riemannian} thus omitted here.
%
%
\end{proof}

\begin{proposition}
\label{cor:smoothness_of_d}
Let Riemannian manifold  $(M,g)$ be as in Theorem \ref{prop:dist_smoothness}. Let $p \in M$ and $q \in M \setminus ( \{p\} \cup \locus(p))$. There exist neighborhoods $U\subset M$ of $p$ and $V\subset M$ of $q$ such that the distance function $d(\cdot,\cdot)$ is smooth in the product set $U \times V$.
\end{proposition}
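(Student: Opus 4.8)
The plan is to upgrade the "separate variables" smoothness statement of Lemma~\ref{lem:outside_cut_locus} (which gives smoothness of $d(p,\cdot)$ for fixed $p$) into a "joint variables" statement by a compactness/openness argument, exploiting that the obstruction to smoothness — the cut locus — varies upper-semicontinuously in the base point. The key point to establish is that the complement of the cut locus is "jointly open": the set $\mathcal{U} := \{(p,q) \in M \times M : q \notin \{p\} \cup \locus(p)\}$ is open in $M \times M$. Granting this, the conclusion follows quickly: given $(p,q) \in \mathcal{U}$, pick a product neighborhood $U \times V \subset \mathcal{U}$, and on $U \times V$ the function $d$ agrees with a smooth function. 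To see the last point, note that for each fixed $p' \in U$ we already know from Lemma~\ref{lem:outside_cut_locus} that $d(p',\cdot)$ is smooth on $V$; what remains is joint smoothness, which I would get not from this fiberwise fact but from the exponential-map description below.

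The cleanest route to joint openness and joint smoothness is via the exponential map and the characterization of cut points in Proposition~\ref{Prop:cut_locus}. Fix $(p_0,q_0)\in\mathcal U$. By Lemma~\ref{lem:outside_cut_locus} applied at $p_0$, together with Lemma~\ref{lem:cut_time}, there is a \emph{unique} unit-speed minimizing geodesic $\gamma$ from $p_0$ to $q_0$, it has no conjugate points, and its length $t_0=d(p_0,q_0)$ satisfies $t_0<\cut(p_0,v_0)$ where $v_0=\dot\gamma(0)$ — in particular $q_0 = \exp_{p_0}(t_0 v_0)$ lies in the interior of $M$ and $(t_0 v_0)$ is a regular point of $\exp_{p_0}$ (no conjugate points means the differential of $\exp_{p_0}$ is invertible there). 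Since $M \hookrightarrow \hat M$ with the distance functions agreeing (Proposition~\ref{prop:extension}), I can work in $\hat M$, whose boundary is strictly convex and where $q_0$ is an interior point at distance $t_0<\hat\cut(p_0,v_0)$ along a conjugate-point-free minimizer. Now consider the joint exponential map $E(p,w) := (p,\exp_p w)$, defined and smooth for $(p,w)$ in a neighborhood of $(p_0, t_0 v_0)$ in $T\hat M$ (geodesics with interior base point and small time depend smoothly on initial data, and $q_0$ is well inside $\hat M$). Its differential at $(p_0,t_0v_0)$ is invertible: the $w$-block is $d(\exp_{p_0})_{t_0v_0}$, invertible by the no-conjugate-point condition, and the triangular structure of $DE$ handles the rest. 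By the inverse function theorem, $E$ is a local diffeomorphism from a neighborhood $\mathcal{O}$ of $(p_0,t_0 v_0)$ onto a neighborhood $W$ of $(p_0,q_0)$ in $\hat M \times \hat M$; write its smooth inverse as $(p,q)\mapsto (p, w(p,q))$.

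On $W$ one then has a smooth candidate for the distance, namely $\rho(p,q) := \|w(p,q)\|_g$, and the remaining task — the main obstacle — is to show that, after shrinking $W$ (and intersecting with $M\times M$), $\rho$ actually equals $d$. This is a stability-of-minimizers argument: I must rule out that for $(p,q)$ near $(p_0,q_0)$ some \emph{other}, shorter or competing, geodesic appears, i.e.\ that $(p,q)$ stays in $\mathcal U$ and the short geodesic selected by $w(p,q)$ is the minimizer. Suppose not; then there are $(p_k,q_k)\to(p_0,q_0)$ with $q_k\in\locus(p_k)\cup\{p_k\}$ or with a competing minimizer of length $\le\rho(p_k,q_k)$ not equal to the $w$-geodesic. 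The first case is excluded because $\cut$ is continuous (Lemma~\ref{lem:cut_dis_is_cont}) and $\exit$ is continuous on the non-trapped set, so $\cut(p_k,w(p_k,q_k)/\|w(p_k,q_k)\|)\to\cut(p_0,v_0)>t_0$ while $\|w(p_k,q_k)\|\to t_0$, forcing $q_k\notin\locus(p_k)$ for large $k$; likewise $q_k$ stays in the interior. For the competing-minimizer case, extract a limit of unit-speed minimizers $\sigma_k$ from $p_k$ to $q_k$ (by Arzelà--Ascoli, using compactness of $\hat M$ and the uniform length bound); the limit is a minimizer from $p_0$ to $q_0$, hence equals $\gamma$ by uniqueness, so $\dot\sigma_k(0)\to v_0$ and for large $k$ the initial vector $t_k\dot\sigma_k(0)$ lies in $\mathcal O$ and is mapped by $E$ to $(p_k,q_k)$, so by injectivity of $E$ on $\mathcal O$ it coincides with $w(p_k,q_k)$ — contradiction. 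Hence $\rho=d$ on a neighborhood of $(p_0,q_0)$ in $M\times M$, which is the desired smoothness; taking $U\times V$ inside this neighborhood finishes the proof. I expect the competing-minimizer limiting argument to be the delicate part, since it requires the uniqueness clause of Lemma~\ref{lem:cut_time} and a careful use of continuity of $\cut$ to keep everything away from the cut locus and the boundary.
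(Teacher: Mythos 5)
Your proposal is correct and follows essentially the same route as the paper: the joint exponential map $F(x,v)=(x,\exp_x v)$, the inverse function theorem at the conjugate-point-free regular value, and the continuity of the cut distance of the extension $\hat M$ to identify the smooth candidate $\|F^{-1}(x,y)\|_g$ with $d$ near $(p_0,q_0)$ (the paper then invokes the proof of Proposition~\ref{prop:extension} to pass from $d_{\hat M}$ back to $d_M$). Your additional Arzel\`a--Ascoli argument against competing minimizers is redundant: once $\|w(p,q)\|_g<\hat\cut\bigl(p,w(p,q)/\|w(p,q)\|_g\bigr)$, the definition of the cut distance already gives $d_{\hat M}(p,q)=\|w(p,q)\|_g$, which is all that is needed.
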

\begin{proof}
Let $(N,g)$ be a closed extension of $(M,g)$ as in Proposition \ref{prop:extension}. We define a smooth map 
\[
F: (x,v) \in TN \mapsto (x,\exp_x(v)) \in N\times N.
\]
Then the differential of this map can be written as 
\[ 
\mathrm{D}F(x,v)
=
\begin{bmatrix}
\text{Id}								& 0 \\
*	 	& \mathrm{D}\exp_x(v)
\end{bmatrix}.
\]
Since $q$ is not in the cut locus of $p$ there is a $v_0 \in T_pN$ such that $\|v_0\|_g=d_M(p,q)$, $\exp_{p}(v_0)=q$ and $\mathrm{D}\exp_p(v_0)$ is not singular. Therefore $\det(\mathrm{D} F(p,v_0) )= \det (\mathrm{D} \exp_p(v_0))$ does not vanish. Thus the Inverse function theorem implies that there are neighborhoods $\tilde W\subset TN$ of $(p,v_0)$ and $ W\subset N \times N$ of $(p,q)$ such that the local inverse function of $F$,
\[
F^{-1}\colon W \to \tilde W, \quad F^{-1}(x,y) =(x, \exp_x^{-1}(y)),
\]
is a diffeomorphism.

Let $(\hat M,g)$ be the extension of $(M,g)$ as in Proposition \ref{prop:extension}. Since $q$ is not in the cut locus of $p$ we have 
$
\|F^{-1}(p,q)\|_g<\hat\cut\left(p,\frac{v_0}{\|v_0\|_g}\right).
$
Thus by the continuity of the cut distance function $\hat\cut\left(\cdot,\cdot\right)$ of $(\hat M,g)$ we can choose a neighborhood $ W_1\subset  W$ of $(p,q)$ such that 
\[
\|F^{-1}(x,y)\|_g<\hat\cut\left(x,\frac{F^{-1}(x,y)}{\|F^{-1}(x,y)\|_g}\right), \quad \text{for all } (x,y)\in W_1.
\]
This gives
$
d_{\hat M}(x,y)=\|F^{-1}(x,y)\|_g, \text{ for all } (x,y)\in W_1.
$

 Finally we choose disjoint neighborhoods $U\subset M$ of $p$ and $V\subset M$ of $q$ such that $U\times V$ is contained in $W_1$. Let $(x,y)\in U \times V$ then $\gamma(t):=\exp_{x}(tF^{-1}(x,y))$ for $t\in[0,1]$ is a geodesic of $\hat M$ that connects $x$ to $y$ having the length of $d_{\hat M}(x,y)$. Since both $x$ and $y$ are in $M$, we get from the proof of Proposition \ref{prop:extension} that $\gamma(t)$ is contained in $M$. This yields
\begin{equation}
    \label{eq:smoothness_of_d}
d_{M}(x,y)=\|F^{-1}(x,y)\|_g, \text{ for all } (x,y)\in U \times V.
\end{equation}
Since the sets $U$ and $V$ are disjoint we have that $F^{-1}$ does not vanish in $U\times V$. Hence equation \eqref{eq:smoothness_of_d} gives the smoothness of $d_M(\cdot,\cdot)$ on $U\times V$.
\end{proof}

Recall that we have assumed that $M$ is isometrically embedded in a closed Riemannian manifold $(N,g)$. Thus any geodesic starting from $M$ can be extended to the entire $\R$. Let $p \in N$. We define the \textit{conjugate distance function} $\con\colon S_pN \to \R \cup {\infty}$ by the formula:
\[
\con(p,v)=\inf\{t>0 : \: \gamma_{p,v}(t) \text{ is a conjugate point  to $p$}\}.
\]
As the infimum of the empty set is positive infinity we set $\con(p,v)=\infty$ in the case when the geodesic $ \gamma_{p,v}$ does not have any conjugate points to $p$. 
Since geodesics do not minimize the distance beyond the first conjugate point it holds that
\[
\cut(p,v)\leq \con(p,v), \quad \text{ if } (p,v) \in SM.
\]

The following result is well known, but we could not find its proof in the existing literature, so we provide one below. 
\begin{lemma}
\label{lem:conj_dist_func_is_cont}
Let $(N,g)$ be a closed Riemannian manifold and $p \in N$. The conjugate distance function is continuous on $S_pN$.
\end{lemma}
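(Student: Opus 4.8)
The plan is to establish continuity of $\con(p,\cdot)$ on $S_pN$ via the Jacobi equation and a compactness argument, treating separately the two ways continuity could fail: an upper bound on the limsup and a lower bound on the liminf. Fix $v_0 \in S_pN$ and a sequence $v_k \to v_0$ in $S_pN$. Since $N$ is closed, all geodesics are complete, so $\con$ is well-defined with values in $(0,\infty]$. Recall that $\gamma_{p,v}(t)$ is conjugate to $p$ along $\gamma_{p,v}$ precisely when $t$ is a zero of some nontrivial Jacobi field $J$ along $\gamma_{p,v}$ with $J(0)=0$; equivalently, writing the Jacobi equation in a parallel orthonormal frame along $\gamma_{p,v}$ as a linear ODE $\ddot Y(t) + R_v(t) Y(t) = 0$ with $Y(0)=0$, the matrix-valued solution $\mathcal{J}_v(t)$ with $\mathcal{J}_v(0)=0$, $\dot{\mathcal{J}}_v(0)=\Id$ satisfies $\con(p,v) = \inf\{t>0 : \det \mathcal{J}_v(t) = 0\}$. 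The coefficient matrix $R_v(t)$ depends smoothly (in particular continuously, locally uniformly in $t$) on $v$, because it is built from the curvature tensor evaluated along $\gamma_{p,v}(t)$ and a parallel frame, both of which depend smoothly on initial data by smooth dependence of ODE solutions on parameters.

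\textbf{Upper semicontinuity of $\con$ (i.e. $\limsup_k \con(p,v_k) \le \con(p,v_0)$).} Suppose first $T_0 := \con(p,v_0) < \infty$, and let $J_0$ be a nontrivial Jacobi field along $\gamma_{p,v_0}$ with $J_0(0)=0$, $J_0(T_0)=0$; correspondingly $\det \mathcal{J}_{v_0}(T_0) = 0$. If $T_0$ is not the \emph{first} zero of $\con$ but just any conjugate time, the argument still applies; but we want the first one. The key classical fact is that $T_0$ is a zero of the scalar function $t \mapsto \det \mathcal{J}_{v_0}(t)$ at which the function changes sign (this uses that conjugate points along a geodesic in the Jacobi-equation formulation are nondegenerate in the appropriate sense — more carefully, one uses the index form and the Morse index theorem, or one observes that $\det \mathcal{J}_{v_0}$ vanishes to finite order and changes sign because the nullity jumps by the multiplicity). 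Given $\eps>0$ small so that $\det \mathcal{J}_{v_0}(T_0-\eps)$ and $\det \mathcal{J}_{v_0}(T_0+\eps)$ have opposite signs and no zero of $\con$ for $v_0$ lies in $(0,T_0-\eps)$ is destroyed — actually for upper semicontinuity we only need: by continuous dependence of $\mathcal{J}_v$ on $v$ (uniformly on the compact interval $[0,T_0+\eps]$), for $k$ large $\det \mathcal{J}_{v_k}(T_0-\eps)$ and $\det \mathcal{J}_{v_k}(T_0+\eps)$ still have opposite signs, hence $\mathcal{J}_{v_k}$ has a zero in $(T_0-\eps, T_0+\eps)$, so $\con(p,v_k) < T_0+\eps$. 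This gives $\limsup_k \con(p,v_k) \le T_0 + \eps$, and letting $\eps \to 0$ finishes this case. (If $T_0 = \infty$ there is nothing to prove for the upper bound.)

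\textbf{Lower semicontinuity of $\con$ (i.e. $\liminf_k \con(p,v_k) \ge \con(p,v_0)$).} Set $T_* := \liminf_k \con(p,v_k)$ and suppose for contradiction $T_* < T_0 := \con(p,v_0)$ (allowing $T_0=\infty$). Pass to a subsequence with $\con(p,v_k) \to T_* < \infty$; write $t_k := \con(p,v_k)$, so $\det \mathcal{J}_{v_k}(t_k)=0$ with $t_k \to T_* $, and moreover by definition of $\con$ as the first conjugate time, $\det \mathcal{J}_{v_k}(t) \ne 0$ for $t \in (0,t_k)$. Since $t_k$ stays in a compact interval, say $[0, T_*+1]$, and $\mathcal{J}_{v_k} \to \mathcal{J}_{v_0}$ in $C^1$ uniformly on $[0,T_*+1]$ (standard ODE continuous dependence), we get $\det \mathcal{J}_{v_0}(T_*) = \lim_k \det \mathcal{J}_{v_k}(t_k) = 0$. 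Hence $T_*$ is a conjugate time for $v_0$, contradicting $T_* < \con(p,v_0)$. Combining the two semicontinuity statements yields $\con(p,v_k) \to \con(p,v_0)$, which is the claimed continuity.

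\textbf{Main obstacle.} The delicate point is the upper-semicontinuity step, specifically the \emph{sign-change} property of $\det \mathcal{J}_{v_0}$ at the first conjugate time $T_0$. A priori $\det \mathcal{J}_{v_0}$ could touch zero without changing sign (a degenerate zero), in which case perturbing $v_0$ to $v_k$ might push the graph away from zero and the limsup conclusion would fail. The standard resolution is to invoke that along a geodesic the conjugate points, counted with multiplicity, are the jumps of the index of the index form, and in particular near the first conjugate point the determinant does change sign; equivalently one can use the Morse index theorem or a direct analysis of the Jacobi equation (e.g. the fact that the space of Jacobi fields vanishing at $0$ is $(n-1)$-dimensional and the Lagrangian-Grassmannian path $t\mapsto \mathcal{J}_{v_0}(t)\cdot(\text{vertical})$ has a well-defined Maslov-type crossing, positive by a monotonicity/Sturm argument). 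Since the statement is asserted to be "well known," I would cite the Morse index theorem (e.g. do Carmo, \emph{Riemannian Geometry}, or Lee, \emph{Introduction to Riemannian Manifolds}) for this sign-change fact and keep the perturbation argument above as the body of the proof.
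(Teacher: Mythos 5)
Your lower–semicontinuity argument is fine and is essentially the paper's: pass to a subsequence of conjugate times converging to $T_*$, use continuous dependence of the Jacobi fields (equivalently, of $\det\mathrm{D}\exp_p$) on $v$, and conclude $T_*$ is itself a conjugate time for $v_0$. (A small point you gloss over: you should note that conjugate times are uniformly bounded below by a positive constant, so that $T_*>0$ and the limit really is a conjugate time.)

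The upper–semicontinuity step, however, has a genuine gap, and it is exactly at the point you flag as the ``main obstacle.'' The sign-change property you want to invoke is \emph{false} in general: the function $t\mapsto\det\mathcal J_{v_0}(t)$ vanishes at a conjugate time to order exactly equal to the multiplicity of the conjugate point (this is precisely what the paper computes in the proof of Lemma \ref{lem:smoothness_of_con}, where the $k$-th derivative of the determinant is $\pm k!\det(A)\neq0$). Hence the determinant changes sign only when the multiplicity is odd. On the round sphere $S^3$ one has $\mathcal J_v(t)=\sin(t)\,\mbox{Id}$, so $\det\mathcal J_v(t)=\sin^2 t\geq 0$ touches zero at the first conjugate time $t=\pi$ without changing sign; the same happens whenever the first conjugate point has even multiplicity. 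Neither the Morse index theorem nor the jump-of-index statement rescues the determinant-sign claim: the index jumps by the multiplicity, which controls the \emph{order} of vanishing, not the sign. Consequently your opposite-signs-at-$T_0\pm\eps$ test can give no information, and the perturbation argument collapses.

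The robust invariant is the index form, not the sign of the determinant, and this is the route the paper takes: for any $\eps>0$ there is a piecewise smooth vector field $X$ along $\gamma_{p,v_0}|_{[0,C+\eps]}$, vanishing at the endpoints, with $I(X,X)<0$ (because the segment extends past the first conjugate point); transporting $X$ to the nearby geodesics $\gamma_{p,v_k}$ by parallel frames keeps $I(X_k,X_k)<0$ for large $k$ since negativity is an open condition; and a negative index form forces a conjugate point in $(0,C+\eps]$ by the converse part of Jacobi's theorem. That argument is insensitive to the parity of the multiplicity. Your parenthetical mention of a Maslov-type crossing count with positive crossings is in fact the right underlying mechanism, but it is the index (or Maslov index), not $\operatorname{sgn}\det\mathcal J$, that is the quantity stable under perturbation; as written, your proof does not establish $\limsup_k\con(p,v_k)\leq\con(p,v_0)$.
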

\begin{proof}
Let $v_i\in S_pN$ for $i \in \N$ converge to $v$.

We set
\[
C=\con(p,v), \quad B=\liminf_{i \to \infty}\con(p,v_i), \quad \hbox{ and } \quad A=\limsup_{i\to \infty}\con(p,v_i).
\]
It suffices to show that $A\leq C \leq B$. 

\medskip
We assume first that $C=\infty$. If $A<\infty$ we choose a sub-sequence $v_{i_k}$ of $v_i$ such that $\con(p,v_{i_k})$ converges to $A$. Then
$
\det(\mathrm D\exp_{p}(\con(p,v_{i_k})v_{i_k}))=0,
$
and the smoothness of the exponential map gives $\det(\mathrm D\exp_{p}(Av))=0$ yielding that $\exp_{p}(Av)$ is conjugate to $p$ along $\gamma_{p,v}$. This implies that $\con(p,v)\leq A$, which is impossible. By the same argument we see that $B=\infty$.

\medskip
Let $C<\infty$, and by the same limiting argument as above we get $C\leq B$.
%
Then we show that $A\leq C$.  Choose a sub-sequence $v_{i_k}$ such that $\con(p,v_{i_k})\to A$ as $k\to\infty$.  For the sake of contradiction we suppose that $A>C$. By the definition of the conjugate distance function we have that $q=\gamma_{p,v}(C)$ is the first conjugate to $p$ along $\gamma_{p,v}$. By \cite[Theorem 10.26]{lee2018introduction} for any  $\eps\in(0,A-C)$ there exists a piecewise smooth vector field $X$ on the geodesic segment $\gamma_{p,v}\colon {[0,C+\eps]} \to N$, that vanishes on $0$ and $C+\eps$ such that the index form of $\gamma_{p,v}$ over $X$ is strictly negative. That is
\begin{equation}
\label{eq:index_of_X}
I_{v}(X,X):=\int_{0}^{C+\eps}\langle \mathrm D_t X,  \mathrm D_t X\rangle_g+\langle R(\dot \gamma_{p,v},X)\dot \gamma_{p,v}, X \rangle_g \; \mathrm{d}t<0.
\end{equation}
Here we used the notation $\mathrm{D}_t$ for the covariant differentiation along $\gamma_{p,v}$. The capital $R$ stands for the Riemannian curvature tensor. 

We choose vectors $E_1,\ldots,E_n$ of $T_pN$ that form a basis of $T_pN$ and extend them on $\gamma_{p,v}(t)$ for $t \in [0,C+\eps]$ via the parallel transport.  Since parallel transport is an isomorphism the vector fields $\{E_1(t), \ldots,E_{n}(t)\}$ constitute a basis of $T_{ \gamma_{p,v}(t)}M$.  We write $X(t)=X^j(t)E_j(t)$.  Since $X$ is piecewise smooth it holds that the component functions $X^j(t)$ are piecewise smooth.  This lets us `extend' $X$ on $\gamma_{p,v_{i_k}}$ by the formula
\begin{equation}
\label{eq:vec_field_X}
X_k(t)=X^j(t)E_j^k(t),
\end{equation}
where the vector field $E_j^k(t)$ is the parallel transport of $E_j$ along $\gamma_{p,v_{i_k}}$. Thus $X_k$ is a piecewise smooth vector field on $\gamma_{p,v_{i_k}}$ that vanishes at $t=0$ and $t=C+\eps$. 

Since 
$
v_{i_k}\to v,   \hbox{ as $k \to \infty$}, 
$
it holds that 
\[
\gamma_{p,v_{i_k}}(t) \to \gamma_{p,v}(t), \quad \hbox{ and } \quad  E^k_j(t)\to E_j(t), \quad \hbox{uniformly in $t \in [0,C+\eps]$ as $k \to \infty$. }
\]
Therefore by \eqref{eq:index_of_X}, \eqref{eq:vec_field_X}, the continuity of the Levi-Civita connection, and the Riemannian curvature tensors we have
\[
I_{v_{i_k}}(X_k, X_k)
<0, \quad \hbox{ for large enough $k\in \N$.}
\] 
By \cite[Theorem 10.28]{lee2018introduction} there exists $s_k \in (0,C+\eps]$ so that $\gamma_{p,v_{i_k}}(0)$ and $\gamma_{p,v_{i_k}}(s_k)$ are conjugate points. Therefore we must have $s_k\geq \con(p,v_{i_k})$ and we arrive at a contradiction  
$
\con(p,v_{i_k}) < A.
$
This ends the proof.
\end{proof}

\subsection{The Hausdorff dimension of the cut locus}
We fix a point $p\in M$ for this sub-section. By Lemma \ref{lem:outside_cut_locus} we know that for any $p \in M$ the distance function $d(p,\cdot)$ is smooth in the open set $M\setminus(\locus(p)\cup\{p\})$. Moreover by Proposition \ref{cor:smoothness_of_d}  for each $q$ in this open set there are neighborhoods $U$ of $p$ and $V$ of $q$ such that the distance function is smooth in the product set $U\times V$. As we are interested in the inverse problem where we study distance function $d(p,\cdot)$ restricted on some open subset $\Gamma$ of the boundary, we do not know \textit{a priori} if this function is smooth on $\Gamma$. In particular we do not know the size of the set $\locus(p)\cap \p M$ yet. In this sub-section we show that the set $\p M\setminus\locus(p)$, where $d(p,\cdot)$ is smooth, is always an open and dense subset of $\p M$.

Proposition \ref{Prop:cut_locus} yields that $\locus(p)$ can be written as a disjoint union of 
\begin{itemize}
\item \textit{Conjugate cut points:} 
\[
Q(p):=\{\gamma_{p,v}(t)\in M : v \in S_pM, \: t= \cut(p,v)= \con(p,v)\}\subset \locus(p),
\] 
that are those points $q \in \locus(p)$ such that there exits a distance minimizing geodesic from $p$ to $q$ along which these points are conjugate to each other.  By Proposition \ref{Prop:cut_locus} and Lemma  \ref{lem:conj_dist_func_is_cont} the set $Q(p)$ is closed in $M$. 
\item \textit{Typical cut points:} $T(p)\subset (cut(p)\setminus Q(p))$ that can be connected to $p$ with exactly two distance minimizing geodesics.
\item \textit{A-typical cut points:} $L(p) \subset (cut(p)\setminus Q(p))$ that can be connected to $p$ with more than two distance minimizing geodesics. Thus an a-typical cut point is both non-conjugate and non-typical.
\end{itemize}

It was proven in \cite{itoh1998dimension} that the Hausdorff dimension of the cut locus on a closed Riemannian manifold $(N,g)$ is locally an integer that does not exceed $\dim N -1$. Moreover $T(p)$ is a smooth  hyper-surface of $N$ and the Hausdorff dimension of $Q(p)\cup L(p)$ does not exceed $\dim N-2$. In this paper we will extended these results for manifolds with strictly convex boundary. The main result of this section is as follows:

\begin{theorem}
\label{thm:cut_locus}
Let $(M,g)$ be a smooth, compact, connected, and oriented Riemannian manifold of dimension $n\in \N, \: n\geq 2$ with smooth and strictly convex boundary. If $p \in M$ then
\begin{enumerate}
\item \label{C1}
The set $T(p)$ of typical cut points is a smooth hyper-surface of $M$ that is transverse to $\p M$. 
\item \label{C2}
The Hausdorff dimension of $Q(p)\cup L(p)$ does not exceed $n-2$. 
\item \label{C3} The Hausdorff dimension of $\locus(p)$ does not exceed $n -1$.  
\item \label{C4}
The set
$
\p M \setminus \locus(p)
$
is open and dense in $\p M$. 
\end{enumerate}
\end{theorem}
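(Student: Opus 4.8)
The plan is to deduce statements \r{C1}--\r{C4} largely by transporting the closed-manifold results of Itoh--Tanaka \cite{itoh1998dimension} to the manifold-with-boundary setting, using the extension $\hat M$ of Proposition \ref{prop:extension} as the bridge, and then analyzing the boundary behavior separately. First I would fix $p\in M$ and the closed extension $(N,g)$ (via $\hat M$) so that $d_M=d_N$ on $\bar M$ and $\cut(p,v)\le\hat\cut(p,v)$ with equality exactly on the set defining $\locus(p)$ in \r{eq:cut_locus_real_def}. The key observation is that, by definition of $\locus(p)$ and the inequality $\exit(p,v)<\hat\exit(p,v)$ from \r{eq:order_of_critical_funcs}, every point $q\in\locus(p)$ is an \emph{interior} point of $M$ at which $\hat\cut(p,v)=\cut(p,v)$; hence $\locus(p)\subset\locus_N(p)\cap M^{\mathrm{int}}$, and moreover the trichotomy "conjugate / exactly two minimizers / more than two minimizers" for $q$ relative to $M$ agrees with the one relative to $N$ (a minimizing geodesic in $M$ from $p$ to $q$ is minimizing in $N$ by $d_M=d_N$, and conversely any minimizing geodesic in $N$ between points of $\bar M$ stays in $\bar M$ by the convexity argument inside the proof of Proposition \ref{prop:extension}). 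Therefore $Q(p),T(p),L(p)$ are exactly the traces on $M^{\mathrm{int}}$ of the corresponding sets for the closed manifold $N$.

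With this identification, \r{C1} and \r{C2} in the interior are immediate from \cite{itoh1998dimension}: $T(p)$ is locally a smooth hypersurface of $N$ contained in $M^{\mathrm{int}}$, hence a smooth hypersurface of $M$, and $\dim_{\mathcal H}(Q(p)\cup L(p))\le n-2$; \r{C3} follows since $\locus(p)=Q(p)\cup T(p)\cup L(p)$ has Hausdorff dimension $\le n-1$. What remains, and what I expect to be the main obstacle, is the behavior at the boundary: showing that $T(p)$ is \emph{transverse} to $\p M$ and that $\p M\setminus\locus(p)$ is open and dense in $\p M$. Openness of $\p M\setminus\locus(p)$ is clear from Proposition \ref{Prop:cut_locus}, since $\locus(p)$ is closed. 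For transversality and density the plan is: given $z_0\in\locus(p)\cap\p M$, pick $v\in S_pM$ with $\gamma_{p,v}(\cut(p,v))=z_0$ and $\cut(p,v)=\hat\cut(p,v)$; near $z_0$ the cut locus $\locus(p)$ coincides with $\locus_N(p)$, so one can use the local structure from \cite{itoh1998dimension}. If $z_0$ lies in the typical part, then $T(p)$ is locally the zero set of the smooth function $\rho(x):=\hat r_{p}^{(1)}(x)-\hat r_p^{(2)}(x)$, the difference of the two local branches of the distance function coming from the two minimizers; its gradient at $z_0$ is $v_1^\flat-v_2^\flat$ where $v_1\ne v_2$ are the two (inward) terminal unit velocities, which is a nonzero vector, and transversality to $\p M$ amounts to showing $v_1^\flat-v_2^\flat$ is not tangent to $\p M$ — equivalently $\langle v_1-v_2,\nu\rangle\ne 0$ where $\nu$ is the unit normal. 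This is where strict convexity enters decisively: if $\langle v_1,\nu\rangle=\langle v_2,\nu\rangle$ at $z_0$ then both minimizing geodesics hit $\p M$ at the same angle; using that geodesics of $\hat M$ between points of $\bar M$ stay in $\bar M$ together with strict convexity of $\p M$ (the second-derivative test $\ddot{\tilde s}<0$ from \r{eq:der_test}), one shows the two geodesics would have to coincide, a contradiction.

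For density of $\p M\setminus\locus(p)$ in $\p M$, I would argue by contradiction: if $\locus(p)$ contained an open subset $O$ of $\p M$, then since $\dim_{\mathcal H}(Q(p)\cup L(p))\le n-2<n-1=\dim\p M$, the set $T(p)\cap O$ would be dense in $O$, so $T(p)$ would contain points of $\p M$ arbitrarily close to points of $\p M$ and, by the hypersurface structure, an $(n-1)$-dimensional piece of $T(p)$ would be tangent to $\p M$ along an open subset — but this is exactly excluded by the transversality just established. Alternatively and more cleanly, I would invoke that $T(p)$ is a smooth embedded hypersurface transverse to $\p M$, so $T(p)\cap\p M$ is a smooth $(n-2)$-dimensional submanifold of $\p M$, hence has empty interior in $\p M$; combined with $\dim_{\mathcal H}((Q(p)\cup L(p))\cap\p M)\le n-2$, which also has empty interior in the $(n-1)$-manifold $\p M$ (a set of Hausdorff dimension $<n-1$ cannot contain an open subset of $\p M$), we conclude $\locus(p)\cap\p M$ has empty interior, i.e. its complement is dense. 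The subtle point to handle carefully is the existence and smoothness of the two local distance branches near a typical boundary cut point: one must check that each minimizing geodesic from $p$ to $z_0$ can be slightly extended past $z_0$ inside $\hat M$ and that $z_0$ is not conjugate along either branch (true since $z_0\in T(p)$), so that Proposition \ref{cor:smoothness_of_d}, applied in $\hat M$, furnishes the smooth local branches whose difference cuts out $T(p)$; I expect this local-branch bookkeeping, rather than any deep geometry, to be the most technical part of the write-up.
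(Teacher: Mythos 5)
Your overall architecture — pass to the convex extension $\hat M$, split $\locus(p)$ into $T(p)\cup L(p)\cup Q(p)$, realize $T(p)$ locally as the zero set of the difference of two smooth distance branches, get \eqref{C2} from an Itoh--Tanaka/Sard-type count, and deduce \eqref{C4} from transversality plus the Hausdorff dimension bound — is the same as the paper's. But two steps as written are wrong. First, your ``key observation'' that every $q\in\locus(p)$ is an interior point of $M$ is false: the inequality $\exit(p,v)<\hat\exit(p,v)$ only says the geodesic extends past $\p M$ into $\hat M$, not that $\cut(p,v)<\exit(p,v)$; a geodesic can stop minimizing (in $\hat M$) exactly when it reaches $\p M$, so $\locus(p)\cap\p M$ may be nonempty — indeed \eqref{C4} would be vacuous otherwise, and you yourself analyze $z_0\in\locus(p)\cap\p M$ two sentences later. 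Relatedly, you cannot arrange $d_M=d_N$ on $\bar M$ for a \emph{closed} extension $N$ (Proposition \ref{prop:extension} only gives this for the intermediate $\hat M$, which is not closed), so the identification $\locus(p)=\locus_N(p)\cap M$ and the verbatim citation of \cite{itoh1998dimension} are not available; since $N$ may have shortcuts, $\hat\cut(p,v)$ need not equal the cut time in $N$. The fix is the one the paper carries out: all cut points of $\locus(p)$ lie in $\bar M\subset\hat M^{\mathrm{int}}$, and the Ozols/Itoh--Tanaka arguments are local, so one re-derives them inside $\hat M$ (this is the content of Lemmas \ref{lem:in_eq_for_ks}, \ref{prop:hyper_surface} and Proposition \ref{prop:dim_of_Q(p)}) rather than quoting the closed-manifold theorem.

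Second, your transversality criterion is backwards, and the statement you propose to prove is false. Two hypersurfaces through $q$ fail to be transverse exactly when their normals are \emph{parallel}; since the normal of $T(p)$ at $q$ is $v_1-v_2$, non-transversality means $v_1-v_2=c\nu$, not ``$v_1-v_2$ tangent to $\p M$'' (if $v_1-v_2$ were tangent to $\p M$ the surfaces would in fact be transverse). Consequently, what must be excluded is $v_1-v_2\parallel\nu$, whereas you set out to show $\langle v_1-v_2,\nu\rangle\neq 0$, i.e.\ $\langle v_1,\nu\rangle\neq\langle v_2,\nu\rangle$ — which fails in any mirror-symmetric example where two minimizers reach a boundary cut point at equal angles with opposite tangential components (and there transversality holds anyway). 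The correct argument, which is the paper's: both $v_1$ and $v_2$ are terminal velocities of minimizing geodesics contained in $M$, hence by strict convexity (the second-derivative computation \eqref{eq:der_test}) both are \emph{strictly outward} pointing; if $v_1-v_2=c\nu$ with $c\neq0$ and $\|v_1\|=\|v_2\|=1$, writing $v_i=a_i\nu+w_i$ forces $w_1=w_2$ and $a_1=-a_2$, so one of the two would be inward pointing — a contradiction. With transversality repaired, your ``cleaner'' density argument for \eqref{C4} ($T(p)\cap\p M$ is an $(n-2)$-submanifold, the rest has Hausdorff dimension at most $n-2$, so $\locus(p)\cap\p M$ has empty interior in $\p M$) coincides with the paper's and is fine.
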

For the readers who want to learn more about Hausdorff measure and dimension we suggest to have look at  \cite{burago2001course, mattila1999geometry}. Some basic properties of the Hausdorff dimension $\dim_{\mathcal{H}}$ 
are collected in the following lemma.

\begin{lemma}
\label{lem:Haus_dim_prop}
Basic properties of the Hausdorff dimension are:
\begin{itemize}
    \item If $X$ is a metric space and $A\subset X$ then $\dim_{\mathcal{H}}(A)\leq \dim_{\mathcal{H}}(X)$.
    \item If $X$ is a metric space and $\mathcal{X}$ is a countable cover of $X$ then $\dim_{\mathcal{H}}(X)=\sup_{A \in \mathcal{X}}\dim_{\mathcal{H}}(A)$.
    \item If $X,Y$ are metric spaces and $f\colon X \to Y$ is a bi-Lipschitz map then $\dim_{\mathcal{H}}(A)=\dim_{\mathcal{H}}(f(A))$ for any $A \subset X$.
    \item If $U \subset M$ is open and $M$ is a Riemannian manifold of dimension $n\in \N$ then $\dim_{\mathcal{H}}(U)=n$. 
\end{itemize}
\end{lemma}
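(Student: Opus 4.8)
\textbf{Plan for proving Lemma \ref{lem:Haus_dim_prop}.} The plan is to prove each of the four bullet points in turn, since they are standard facts about Hausdorff measure and dimension and require only short arguments from the definitions. Throughout, recall that for $s\geq 0$ the $s$-dimensional Hausdorff measure of a subset $A$ of a metric space is $\mathcal{H}^s(A)=\lim_{\delta\to 0^+}\inf\{\sum_i (\diam U_i)^s : A\subset \bigcup_i U_i,\ \diam U_i\leq\delta\}$, and that $\dim_{\mathcal{H}}(A)=\inf\{s\geq 0 : \mathcal{H}^s(A)=0\}=\sup\{s\geq 0 : \mathcal{H}^s(A)=\infty\}$.

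First I would prove monotonicity: if $A\subset X$, then any cover of $X$ restricts to a cover of $A$, so $\mathcal{H}^s_\delta(A)\leq \mathcal{H}^s_\delta(X)$ for every $\delta>0$ and every $s$, hence $\mathcal{H}^s(A)\leq\mathcal{H}^s(X)$; consequently $\mathcal{H}^s(X)=0$ implies $\mathcal{H}^s(A)=0$, which gives $\dim_{\mathcal{H}}(A)\leq\dim_{\mathcal{H}}(X)$. Second, for the countable-cover property, write $X=\bigcup_{k}A_k$ with each $A_k\in\mathcal{X}$. Monotonicity gives $\dim_{\mathcal{H}}(A_k)\leq\dim_{\mathcal{H}}(X)$ for all $k$, so $\sup_k\dim_{\mathcal{H}}(A_k)\leq\dim_{\mathcal{H}}(X)$. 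For the reverse inequality, fix $s>\sup_k\dim_{\mathcal{H}}(A_k)$; then $\mathcal{H}^s(A_k)=0$ for every $k$, and by countable subadditivity of $\mathcal{H}^s$ (which follows by concatenating covers) we get $\mathcal{H}^s(X)\leq\sum_k\mathcal{H}^s(A_k)=0$, so $\dim_{\mathcal{H}}(X)\leq s$; letting $s$ decrease to the supremum yields $\dim_{\mathcal{H}}(X)\leq\sup_k\dim_{\mathcal{H}}(A_k)$.

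Third, for the bi-Lipschitz invariance, suppose $f\colon X\to Y$ satisfies $c^{-1}d_X(x,x')\leq d_Y(f(x),f(x'))\leq c\, d_X(x,x')$ for some $c\geq 1$. Given a cover $\{U_i\}$ of $A\subset X$ by sets of diameter $\leq\delta$, the sets $\{f(U_i)\}$ cover $f(A)$ and satisfy $\diam f(U_i)\leq c\,\diam U_i\leq c\delta$, so $\mathcal{H}^s_{c\delta}(f(A))\leq c^s\,\mathcal{H}^s_\delta(A)$; letting $\delta\to 0$ gives $\mathcal{H}^s(f(A))\leq c^s\mathcal{H}^s(A)$. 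Applying the same argument to the inverse map $f^{-1}\colon f(A)\to A$, which is also bi-Lipschitz with constant $c$, gives the opposite inequality up to a constant factor. Since a finite positive multiplicative constant does not change whether $\mathcal{H}^s$ is zero, finite, or infinite, we conclude $\dim_{\mathcal{H}}(f(A))=\dim_{\mathcal{H}}(A)$.

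Finally, for the last bullet, let $U\subset M$ be a nonempty open subset of an $n$-dimensional Riemannian manifold. Every point of $U$ has a coordinate neighborhood contained in $U$ on which the Riemannian distance is bi-Lipschitz equivalent to the Euclidean distance of the coordinate chart (by compactness of a slightly smaller closed ball and continuity of the metric coefficients); since $U$ can be covered by countably many such charts, the countable-cover property reduces the claim to: a nonempty open subset of $\R^n$ with the Euclidean metric has Hausdorff dimension $n$. This last fact is classical — $\mathcal{H}^n$ coincides, up to a positive constant, with Lebesgue measure on $\R^n$, so any nonempty open set has $0<\mathcal{H}^n<\infty$ on any bounded piece and hence Hausdorff dimension exactly $n$ — and I would simply cite \cite{mattila1999geometry} or \cite{burago2001course} for it rather than reproving it. The main (and only mild) obstacle is being careful that the passage from the Riemannian distance to coordinate Euclidean distance is genuinely bi-Lipschitz on small enough sets, which is where one invokes compactness; everything else is bookkeeping from the definitions.
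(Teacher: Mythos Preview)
Your proof is correct and entirely standard. Note, however, that the paper does not actually prove Lemma~\ref{lem:Haus_dim_prop}: it simply states these facts as well known and points the reader to \cite{burago2001course, mattila1999geometry}. So your write-up is more detailed than what the paper provides; there is nothing to compare at the level of argument, and your sketch would serve perfectly well if one wanted to include a proof rather than a citation.
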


From here onwards we follow the steps of \cite{itoh1998dimension, ozols1974cut} and develop machinery needed for the proof of Theorem \ref{thm:cut_locus}. We recall that we have isometrically embedded $M$ into the closed Riemannian manifold $(N,g)$. The maximal subset $M_p \subset T_pM$ where the exponential map $\exp_p\colon M_p \to M$ of $(M,g)$ is well defined was given in \eqref{eq:M_p_set} as
\[
M_p=\left\{v\in T_pM: \: v=0 \hbox{ or } \|v\|_g\leq \exit\left(p,\frac{v}{\|v\|_{g}}\right)\right\}.
\]
Thus the exponential function $\exp_p\colon T_pN \to N$ of $N$ agrees with that of $M$ in $M_p$.

Let $v_0 \in M_p$ be such that the exponential map $\exp_p$ is not singular at $v_0$. The Inverse function theorem yields that there are neighborhoods $ U \subset T_pN$ of $v_0$ and $\tilde V \subset N$ of $x_0=\exp_p(v_0) \in M$ such that $\exp_p\colon  U \to \tilde V$ is a diffeomorphism. We want to emphasize that even when $v_0\in M_p$ the set $U\subset T_pN$ does not need to be contained in $M_p$.  If we equate $T_pN$ and $T_{v_0}(T_pN)$ we note that the formula
\[
Y(x)=\mathrm{D} \exp_p\bigg|_{ \exp_{p}^{-1}(x)} \exp_{p}^{-1}(x)
\]
defines a smooth vector field on $\tilde V$ that satisfies the following properties
\begin{equation}
\label{eq:props_of_Ys}
Y(x)=\dot{\gamma}_{p, \exp_{p}^{-1}(x)}(1), \quad \hbox{ and } \quad \|Y(x)\|_g=\| \exp_{p}^{-1}(x)\|_g.
\end{equation}
The vector field $Y$ is called a \textit{distance vector field} related to $p$ and $ U$. Let $x \in \tilde V$ and $X\in T_{x} N$.  It holds by a similar proof to \cite[Lemma 2.2.]{ozols1974cut} that 
\begin{equation}
\label{eq:der_of_Y}
X\|Y(x)\|_g=\frac{\langle X,Y(x)  \rangle_g}{\|Y(x)\|_g}.
\end{equation}

In what follows we will always consider $\locus(p)$ as defined for $(M,g)$ in equation \eqref{eq:cut_locus_real_def}.  Let $q \in \locus(p)\setminus Q(p)$ and $\lambda=d(p,q)$.  By Proposition \ref{Prop:cut_locus} it holds that there are at least two $M$-distance minimizing geodesics from $p$ to $q$. Thus the set
\[
E_{p,q}:=\exp_p^{-1}\{q\}\cap S_\lambda M,  \quad \hbox{ where }S_ \lambda M=\{w \in T_pM: \: \|w\|_g=\lambda\},
\]
contains at least two points. 



By the compactness of $S_\lambda M$ and the continuity of the exit time function 
on the non-trapping part of $SM$
we get from the assumption that $q\notin Q(p)$ that the set $E_{p,q}$ is finite. We write 
\[
E_{p,q}=\{w_i : \: i \in \{1,\ldots, k_p(q)\}\},
\]
where $k_p(q) \in \N$ is the number of distance minimizing geodesics from $p$ to $q$.  Since the set $Q(p)$ is closed in $M$, the complement $\locus(p)\setminus Q(p)$ is relatively open in $\locus(p)$, and there exists an open neighborhood $W \subset M$ of $q$ such that
$
Q(p)\cap W=\emptyset.
$  
Thus by the previous discussion for any $x \in \locus(p)\cap W$ there are only $k_p(x) \in \N$ many distance minimizing geodesics connecting $p$ to $x$. 
Moreover, as the following lemma shows, $q$ is a local maximum of the function $k_p$ defined on $\locus(p) \cap W.$
This statement is an adaptation of the analogous result given in \cite{ozols1974cut}. 
\begin{lemma}
\label{lem:in_eq_for_ks}
Let $(M,g)$ be a Riemannian manifold as in Theorem \ref{thm:cut_locus}. Let $p \in M$ and $q \in \locus(p)\setminus Q(p)$. Let the closed manifold $(N,g)$ be as in Proposition \ref{prop:extension}. Then there is a neighborhood $V$ of $q$ in $M$ such that 
\begin{equation}
\label{eq:k(z)<k(q)}
k_p(x)\leq k_p(q), \quad \hbox{ for every } x \in \locus(p)\cap V.
\end{equation}
\end{lemma}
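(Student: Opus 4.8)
The plan is to argue by upper semicontinuity of the counting function $k_p$ at the point $q$. Fix $q\in\locus(p)\setminus Q(p)$ and write $E_{p,q}=\{w_1,\dots,w_{k_p(q)}\}\subset S_\lambda M$, where $\lambda=d(p,q)$; as noted above, this set is finite because $q\notin Q(p)$. Since none of the $w_i$ is a conjugate vector, $\exp_p$ is nonsingular at each $w_i$, so by the Inverse function theorem there are pairwise disjoint neighborhoods $U_i\subset T_pN$ of $w_i$ and neighborhoods $\tilde V_i\subset N$ of $q$ such that $\exp_p\colon U_i\to\tilde V_i$ is a diffeomorphism. Shrinking, we may take a single neighborhood $V\subset M$ of $q$ contained in $\bigcap_i\tilde V_i$ and, using the neighborhood $W$ from the discussion preceding the lemma, also inside $W$, so that no point of $\locus(p)\cap V$ is a conjugate cut point.

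First I would show that for $x$ close enough to $q$, \emph{every} $M$-distance minimizing geodesic from $p$ to $x$ has initial velocity close to one of the $w_i$. Suppose not: then there is a sequence $x_j\to q$ and unit vectors $u_j\in S_pM$ with $\exp_p(d(p,x_j)u_j)=x_j$ and $u_j$ staying a fixed distance away from every $w_i$. By compactness of $S_pM$, pass to a subsequence $u_j\to u_\infty\in S_pM$, with $u_\infty$ also bounded away from the $w_i$. Since $d(p,x_j)\to\lambda$ and $\exp_p$ is continuous, $\exp_p(\lambda u_\infty)=q$; one checks the geodesic $\gamma_{p,u_\infty}|_{[0,\lambda]}$ is distance minimizing (its length is $\lim d(p,x_j)=\lambda=d(p,q)$), so $\lambda u_\infty\in E_{p,q}$, forcing $u_\infty\in\{w_1,\dots,w_{k_p(q)}\}$, a contradiction. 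Here I need the continuity of $\exit$ on the non-trapping part and the fact that $q$ is an interior point, exactly as in the finiteness argument for $E_{p,q}$, to guarantee that the minimizers stay in (the interior of) $M$ and that lengths converge. Hence, after shrinking $V$ once more, every minimizing geodesic from $p$ to any $x\in V$ has initial velocity in $\bigcup_i U_i'$, where $U_i'\Subset U_i$ are smaller neighborhoods of the $w_i$.

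Next, within each diffeomorphic chart $\exp_p\colon U_i\to\tilde V_i$ the preimage $\exp_p^{-1}(x)\cap U_i$ is a single point for each $x\in V$, of norm $\|\exp_p^{-1}(x)\|_g\to\|w_i\|_g=\lambda$; and this point yields a \emph{minimizing} geodesic from $p$ to $x$ only if its norm equals $d(p,x)$. In any case, the number of minimizing geodesics from $p$ to $x$ is at most the number of indices $i$ for which $x\in\tilde V_i$, which is at most $k_p(q)$. Therefore $k_p(x)\le k_p(q)$ for all $x\in\locus(p)\cap V$, which is \eqref{eq:k(z)<k(q)}.

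The main obstacle is the compactness-and-limit step in the second paragraph: one must rule out minimizing geodesics from $p$ to nearby $x$ whose directions escape all neighborhoods of the $w_i$, and this uses in an essential way that $q$ lies in the interior of $M$ together with the continuity of $\exit$ on $J$, so that a limit of such minimizers is again an honest interior-minimizing geodesic from $p$ to $q$ and hence has direction in the finite set $E_{p,q}$. Once that is in place, everything else is the Inverse function theorem plus a counting argument, following \cite{ozols1974cut} closely.
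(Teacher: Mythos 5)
Your proof follows the paper's argument essentially verbatim: finiteness of $E_{p,q}$, pairwise disjoint charts on which $\exp_p$ is a diffeomorphism via the Inverse Function Theorem, a compactness/contradiction argument showing that every minimizer from $p$ to a nearby point has (scaled) initial velocity in one of these charts, and then the one-minimizer-per-chart count. One small correction: $q$ need not be an interior point of $M$ (the cut locus can meet $\p M$), but this is harmless because your limit step only actually uses the continuity of $\exit$ on the non-trapping set to conclude that the limiting direction lies in $M_p$, which is exactly what the paper does.
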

\begin{proof}
Since the set $E_{p,q}$ is finite we can choose disjoint neighborhoods $U_i \subset  T_pN$ for each $w_i \in E_{p,q}$, so that for each $i \in \{1,\ldots,k_p(q)\}$ the map
$
\exp_p\colon U_i \to \tilde V 
$
is a diffeomorphism on some open set $\tilde V\subset N$ that contains $q$.  We want to show that there is a neighborhood $V \subset M$ of $q$ such that for every $x \in V$ and for any $M$-distance minimizing unit speed geodesic $\gamma$ from $p$ to $x$ there is $i \in \{1, \ldots, k_p(q)\}$ such that
\[
\gamma(t)=\exp_p\left(t\frac{X}{\|X\|_g}\right), \quad \hbox{ for some } X \in M_p\cap U_i.
\]
Clearly this implies the inequality \eqref{eq:k(z)<k(q)}.

If the former is not true then there exist a sequence $q_k \in M$ that converges to  $q$ and  $X_k \in M_p$ so that for each $k \in\N$ we have
\begin{itemize}
\item 
$\exp_p(X_k)=q_k$
\item 
$\|X_k\|_g=d_M(p,q_k)\leq \exit(p,\frac{X_k}{\|X_k\|_g})$
\item
$\exp_p\left(t\frac{X_k}{\|X_k\|_g}\right)$ for $t \in [0,d_M(p,q_k)]$ is a unit speed distance minimizing geodesic from $p$ to $q_k$. 
\item
$X_k \notin  U_1\cup\ldots \cup U_{k_p(q)}$.
\end{itemize}
These imply that
\[
\lim_{k \to \infty } \|X_k\|_g=\lim_{k \to \infty} d_M(p,q_k)=d_M(p,q).
\]
Moreover, the sequence $X_k \in T_pM$ is contained in some compact subset $K$ of $T_pM$. After passing to a sub-sequence we may assume $X_k \to X \in T_pM$ and the continuity of the exit time function 
on the non-trapping part of $SM$
gives $\|X\|_g\leq \exit(p,\frac{X}{\|X\|_g})$. Thus $X\in M_p$ and by the continuity of $\exp_p$ we get
$
\exp_p(X)=q, \hbox{ and } \|X\|_g=d_M(p,q).   
$

Therefore $t\mapsto \exp_p\left(t\frac{X}{\|X\|_g}\right)$ is a $M$-distance minimizing geodesic from $p$ to $q$ and $X$ must coincide with $w_i$ for some $i \in \{1,\ldots,k_p(q)\}$. Therefore $X_k \in U_i$ for large enough $k \in \N$. This contradicts the choice of $X_k$, and possibly after choosing a smaller $\tilde V$, we can set $V=M \cap \tilde V$.  
\end{proof}

Suppose now that $q \in T(p)$ is a typical cut point, and $V\subset M$ is a neighborhood of $q$ as in Lemma \ref{lem:in_eq_for_ks}.  Then by \eqref{eq:k(z)<k(q)} it holds that 
\begin{equation}
\label{eq:lous_cap_V}
\locus(p) \cap V \subset T(p),
\end{equation}
and $E_{p,q}=\{w_1,w_2\}\subset T_{p}N$ are the directions that give the two distance minimizing geodesics $\exp_{p}(tw_i), \: t\in [0,1]$ from $p$ to $q$.
Let $U_1,U_2\subset T_pN$ be the neighborhoods of $w_1$ and $w_2$ and $\tilde{V}\subset N$ a neighborhood of $q$ as in the proof of Lemma \ref{lem:in_eq_for_ks}. 
Finally we consider the distance vector fields $Y_1,Y_2$ related to $p$ and $ U_1$ and $ U_2$.  Since these vector fields do not vanish on $\tilde V$ the function
\[
\rho\colon \tilde V \to \R, \quad \rho(x)=\|Y_1(x)\|_g-\|Y_2(x)\|_g,
\]
is smooth. The following result is an adaptation of \cite[Propositions 2.3 \& 2.4]{ozols1974cut}.

\begin{lemma}
\label{prop:hyper_surface}
Let Riemannian manifold $(M,g)$ be as in Theorem \ref{thm:cut_locus} and $p \in M$. Let $q \in T(p)\subset M$ and define the closed manifold $N$ as in Proposition \ref{prop:extension}. Let the neighborhood $\tilde V \subset N$ of $q$ and function $\rho\colon \tilde V \to \R$ be as above. Then possibly after choosing a small enough $\tilde V$ we have
\begin{equation}
\label{eq:rho_level_set}
\rho^{-1}\{0\}\cap M=\tilde V\cap \locus(p).
\end{equation}  
Moreover, the set $\rho^{-1}\{0\}$ is a smooth hyper-surface of $N$ whose tangent bundle is given by the orthogonal complement of the vector field $Y_1-Y_2$. 
\end{lemma}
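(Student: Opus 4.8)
\textbf{Proof plan for Lemma \ref{prop:hyper_surface}.}

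The plan is to argue in two stages: first establish that the zero set of $\rho$ inside $M$ coincides locally with the cut locus, then show that $\rho$ has nonvanishing differential on its zero set so that $\rho^{-1}\{0\}$ is a smooth embedded hyper-surface of $N$, and finally identify its tangent space with $(Y_1-Y_2)^\perp$. For the first stage, recall from \eqref{eq:lous_cap_V} that after shrinking we may assume $\locus(p)\cap V\subset T(p)$, so for $x\in V$ near $q$ there are exactly two distance minimizing geodesics from $p$ to $x$, and (by the construction in the proof of Lemma \ref{lem:in_eq_for_ks}) their initial vectors lie in $U_1$ and $U_2$ respectively. For $x\in\tilde V$, $\|Y_i(x)\|_g=\|\exp_p^{-1}(x)\|_g$ is the length of the geodesic $t\mapsto\exp_p(tw)$, $w=\exp_p^{-1}(x)\in U_i$, from $p$ to $x$ — a curve, though not \emph{a priori} a minimizer. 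One inclusion: if $x\in\locus(p)\cap\tilde V$ then $x\in T(p)$, both these geodesics are minimizing, hence $\|Y_1(x)\|_g=\|Y_2(x)\|_g=d(p,x)$ and $\rho(x)=0$. The reverse inclusion needs a short argument: if $x\in M\cap\tilde V$ with $\rho(x)=0$, then $\|Y_1(x)\|_g=\|Y_2(x)\|_g=:\ell$; since at least one of the two geodesics through $x$ coming from $U_1,U_2$ is length-minimizing when $x$ is close enough to $q$ (because $q$ itself is a cut point reached minimally from both $w_1$ and $w_2$, and lengths vary continuously), we get $\ell=d(p,x)$, so \emph{both} geodesics are minimizing; as they have distinct initial directions $w_1\neq w_2$, the point $x$ admits two distinct minimizers, hence $x\in\locus(p)$. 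This uses continuity of $\hat\cut$ and $\hat\exit$ (via Proposition \ref{cor:smoothness_of_d} and \eqref{eq:order_of_critical_funcs}) to guarantee minimality persists in a neighborhood.

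For the second stage, compute $\mathrm d\rho$. Using \eqref{eq:der_of_Y}, for $X\in T_xN$ one has
\[
X\rho=X\|Y_1\|_g-X\|Y_2\|_g=\frac{\langle X,Y_1(x)\rangle_g}{\|Y_1(x)\|_g}-\frac{\langle X,Y_2(x)\rangle_g}{\|Y_2(x)\|_g}.
\]
On $\rho^{-1}\{0\}$ the two norms are equal, so $X\rho=\|Y_1(x)\|_g^{-1}\langle X,\,Y_1(x)-Y_2(x)\rangle_g$. Thus $\mathrm d\rho_x$ vanishes on $X$ exactly when $X\perp (Y_1-Y_2)(x)$, and $\mathrm d\rho_x\neq 0$ precisely when $Y_1(x)\neq Y_2(x)$. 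Since at $q$ the vectors $Y_1(q)=\dot\gamma_{p,w_1}(1)$ and $Y_2(q)=\dot\gamma_{p,w_2}(1)$ are the terminal velocities of two \emph{distinct} minimizing geodesics, they are distinct (equal terminal velocities would force the geodesics to coincide, contradicting $w_1\neq w_2$). By continuity $Y_1-Y_2$ is nonvanishing on a possibly smaller $\tilde V$, so $0$ is a regular value of $\rho$ there; by the regular value theorem $\rho^{-1}\{0\}$ is a smooth embedded hyper-surface of $N$, and its tangent space at each point is $\ker\mathrm d\rho=(Y_1-Y_2)^\perp$, which is the stated description of its tangent bundle.

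The main obstacle I expect is the reverse inclusion in \eqref{eq:rho_level_set}, i.e. verifying that $\rho(x)=0$ genuinely forces $x$ to be a cut point rather than merely a point joined to $p$ by two geodesics of equal length that happen not to be minimizing. This is handled by shrinking $\tilde V$: minimality of the two competing geodesics at $q$ is an \emph{open} condition because of the continuity of the cut distance function $\hat\cut$ on the extended manifold $\hat M$ (Proposition \ref{prop:extension} and Lemma \ref{lem:cut_dis_is_cont}) together with \eqref{eq:order_of_critical_funcs}; concretely, each $w_i$ satisfies $\|w_i\|_g=\hat\cut(p,w_i/\|w_i\|_g)$ or smaller, and nearby directions still lie below $\hat\cut$, so the corresponding geodesics remain distance minimizing in $M$ by the argument in the proof of Proposition \ref{cor:smoothness_of_d}. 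Everything else — the differentiation of $\rho$, the regular-value argument, the identification of the tangent bundle — is routine once \eqref{eq:der_of_Y} and the distinctness of $Y_1(q),Y_2(q)$ are in hand.
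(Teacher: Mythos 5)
Your second stage --- the computation of $\mathrm d\rho$ via \eqref{eq:der_of_Y}, the observation that $Y_1-Y_2$ cannot vanish because equal terminal velocities would force the two geodesics to coincide, the regular-value argument, and the identification of the tangent bundle with $(Y_1-Y_2)^\perp$ --- is correct and is exactly the paper's argument. The forward inclusion $\tilde V\cap\locus(p)\subset\rho^{-1}\{0\}$ is also fine. A secondary quibble first: your justification that ``at least one of the two geodesics is minimizing'' by continuity of $\hat\cut$ (``minimality persists in a neighborhood'') is not right as stated --- for $x\in\tilde V$ just beyond $q$ along $\gamma_{p,w_1/\|w_1\|_g}$ the $U_1$-geodesic is \emph{not} minimizing, so minimality of a fixed family of geodesics is not an open condition at a cut point. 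What actually does the work is the proof of Lemma \ref{lem:in_eq_for_ks}: \emph{every} $M$-minimizer from $p$ to a point of $\tilde V\cap M$ has initial vector in $U_1\cup U_2$, and a minimizer always exists by geodesic convexity. With that correction your conclusion $\ell=d_M(p,x)$ stands.

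The genuine gap is in the next step of the reverse inclusion. Knowing that both radial curves $t\mapsto\exp_p(tX_i)$, $X_i\in U_i$, $t\in[0,1]$, have length $d_M(p,x)$ does \emph{not} let you conclude that both are distance minimizers \emph{of $M$}: the neighborhoods $U_i$ live in $T_pN$ and need not be contained in $M_p$ (the paper warns about this explicitly), so the second curve is a priori only a geodesic of the closed extension $N$ and may leave $M$. A curve that exits $M$ contributes nothing to $\locus(p)$, which is defined through geodesics of $M$, so ``two curves of equal length $d_M(p,x)$'' does not yet give ``two $M$-minimizers''. This is precisely the boundary-specific difficulty that separates this lemma from Ozols' closed-manifold version, and it is where the paper's proof spends most of its effort: one shrinks $\tilde V$ so that $\exp_p(tX)\in\hat M$ for all $t\in[0,1]$ and $X\in U_i$ (possible since $q$ lies in the interior of the strictly convex extension $\hat M$ of Proposition \ref{prop:extension}), observes that the second curve is then a distance minimizing geodesic of $\hat M$ between points of $\bar M$ because $d_{\hat M}=d_M$ there, and invokes the convexity argument from the proof of Proposition \ref{prop:extension} to force it back into $M$. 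Without this step your argument is complete only when the segment $\exp_p(tw_2)$ stays uniformly inside $M^{\mathrm{int}}$ (e.g.\ $p,q$ both interior), whereas the lemma is needed precisely at $\p M$ for parts \eqref{C1} and \eqref{C4} of Theorem \ref{thm:cut_locus}.
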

\begin{proof}
We prove first the equation \eqref{eq:rho_level_set}. 
\begin{itemize}
\item
Let $x \in \rho^{-1}\{0\}\cap M$.  By the proof of Lemma \ref{lem:in_eq_for_ks} we can assume that a $M$-distance minimizing unit speed geodesic from $p$ to $x$ is given by
$
\exp_p\left(tX_1\right), \: t\in [0,1], \: \hbox{ for some } X_1 \in M_p\cap U_1.
$  
Also $x=\exp_{p}(X_2)$ for some $X_2 \in  U_2$, but we do not know \textit{a priori} if $\exp_{p}(tX_2)\in M$ for all $t\in [0,1]$ or equivalently if $X_2 \in M_p$. However, by the definition of the distance vector fields
and the assumption $x \in \rho^{-1}\{0\}$ we have that
\begin{equation}
    \label{eq:length_of_X}
d_M(p,x)=\|X_1\|_g=\|Y_1(x)\|_g=\|Y_2(x)\|_g=\|X_2\|_g.
\end{equation}

Let $\hat{M}$ be as in Proposition \ref{prop:extension}. Thus we can assume that $\tilde V \subset \hat M$. Since $q \in T(p)$ there is $w_2 \in U_2$ so that $\exp_p(w_2)=q$ and
$
\exp_p(tw_2)\in M \subset \hat M,
$
for every $t\in[0,1]$. Since $q$ is an interior point of $\hat M$ we can again choose smaller $\tilde{V}$ so that 
$
\exp_p(tX) \in \hat M,
$
for every $t\in[0,1]$ and $X \in U_i$, for $i \in \{1,2\}$. Since $x \in M$  and $\exp_p(tX_2)$ is a geodesic of $\hat M$ that connects $p$ to $x$ having the length of $\|X_2\|_g$, the equation \eqref{eq:length_of_X} and Proposition \ref{prop:extension} imply that $\exp_p(tX_2) \in M$ for every $t\in[0,1]$. Therefore equation \eqref{eq:length_of_X} gives $x \in \tilde V\cap \locus(p)$.  


\item
Let $x \in \tilde V\cap \locus(p) \subset T(p)$. Thus there are exactly two distance minimizing geodesics of $M$ from $p$ to $x$.  Since $x \in \tilde V$, it holds by the proof of Lemma \ref{lem:in_eq_for_ks} that one of these geodesics has the initial velocity in $U_1$ and the other in $U_2$. Therefore $\rho(x)$ is zero by the definition of the distance vector fields.
\end{itemize}

Then we prove that the set $\rho^{-1}\{0\}$ is a smooth hyper-surface whose tangent bundle is orthogonal to the vector field $Y_1-Y_2$. By \eqref{eq:der_of_Y} we get
\begin{equation}
    \label{eq:diff_of_rho}
X\rho(x)=\frac{\langle X,Y_1(x)  \rangle_g}{\|Y_1(x)\|_g}-\frac{\langle X,Y_2(x) \rangle_g}{\|Y_2(x)\|_g}=\frac{\langle X,Y_1(x)-Y_2(x)  \rangle_g}{\|Y_1(x)\|_g}, \quad \hbox{ for every } x \in \rho^{-1}\{0\}.
\end{equation}

Moreover the vector field $Y_1-Y_2$ does not vanish on $\tilde V$, since the geodesics related to these two vector fields are different. 
This implies that the differential of the map $\rho$ does not vanish in $\tilde V$. Thus the set $\rho^{-1}\{0\}$ is a smooth hyper-surface of $N$, and by \eqref{eq:diff_of_rho} its tangent bundle is given by those vectors that are orthogonal to  $Y_1-Y_2$.
\end{proof}

\medskip

Now we consider the set of conjugate cut points $Q(p)$. First we define a function
\[
\delta\colon S_pN \to \{0,1,\ldots,n-1\}, \quad \delta(v) \text{ is the dimension of the kernel of }  \mathrm{D}\exp_p \text{ at } \con(p,v)v.
\]
If $\cut(p,v)=\infty$ we set $\delta(v)=0$. 

\begin{lemma}
\label{lem:delta_func_is_locally_constant}
Let $(N,g)$ be a closed Riemannian manifold. Let $p \in N$ and $v_0 \in S_pN$ be such that $\delta(v_0)=1$. There exists a neighborhood $U \subset S_pN$ of $v_0$ such that the $\delta(\cdot)$ is the constant function one in $U$. 
\end{lemma}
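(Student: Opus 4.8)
The plan is to exhibit a neighborhood of $v_0$ on which the kernel of $\mathrm{D}\exp_p$ at the first conjugate point is both at least one-dimensional (this is automatic, since $\con(p,\cdot)$ is finite near $v_0$ by continuity, Lemma~\ref{lem:conj_dist_func_is_cont}, and the conjugate point by definition has nontrivial kernel) and at most one-dimensional. The real content is the upper semicontinuity of the kernel dimension, or rather the opposite: we must rule out the dimension \emph{jumping up} to $2$ or more as $v$ moves away from $v_0$. So the first step is to set up the Jacobi-field description: identify $\delta(v)$ with the dimension of the space of Jacobi fields along $\gamma_{p,v}$ vanishing at $t=0$ and $t=\con(p,v)$, equivalently with the dimension of $\ker \mathrm{D}\exp_p(\con(p,v)v)$ viewed as a linear map $T_pN\to T_{\exp_p(\con(p,v)v)}N$.

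Next I would localize using the inverse/implicit function theorem. Fix a basis $E_1,\dots,E_n$ of $T_pN$ parallel-transported along each nearby geodesic (as is already done in the proof of Lemma~\ref{lem:conj_dist_func_is_cont}), and for $(v,t)$ near $(v_0,\con(p,v_0))$ consider the matrix-valued function $A(v,t)$ representing $\mathrm{D}\exp_p(tv)$ in these bases; it depends smoothly on $(v,t)$. Since $\delta(v_0)=1$, the matrix $A(v_0,\con(p,v_0))$ has rank exactly $n-1$, so some $(n-1)\times(n-1)$ minor is nonzero there, hence nonzero in a whole neighborhood. This forces $\operatorname{rank} A(v,t)\ge n-1$ for $(v,t)$ in a neighborhood, i.e. $\dim\ker \mathrm{D}\exp_p(tv)\le 1$ there. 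The key point is then that this estimate applies with $t=\con(p,v)$: by continuity of $\con$ (Lemma~\ref{lem:conj_dist_func_is_cont}), for $v$ close enough to $v_0$ the pair $(v,\con(p,v))$ lies in that neighborhood, so $\delta(v)=\dim\ker \mathrm{D}\exp_p(\con(p,v)v)\le 1$. Combined with $\delta(v)\ge 1$ (the conjugate point always has nontrivial kernel, and $\con(p,v)<\infty$ for $v$ near $v_0$), we conclude $\delta\equiv 1$ on a neighborhood $U$ of $v_0$.

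The main obstacle, and the one subtlety to be careful about, is the interaction between the two limiting processes: the nonvanishing minor gives an open set in $(v,t)$-space, but $\delta(v)$ is evaluated at the moving time $t=\con(p,v)$, so one genuinely needs the continuity of $\con$ near $v_0$ — which is exactly why this lemma is placed after Lemma~\ref{lem:conj_dist_func_is_cont} — to guarantee $(v,\con(p,v))$ stays inside the good neighborhood. A secondary point to check is that $\con$ is finite near $v_0$ (otherwise $\delta$ is set to $0$ by convention and the claim would fail); but $\con(p,v_0)<\infty$ because $\delta(v_0)=1$ means there \emph{is} a conjugate point along $\gamma_{p,v_0}$, and finiteness propagates to a neighborhood again by Lemma~\ref{lem:conj_dist_func_is_cont}. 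Everything else — the smooth dependence of $A(v,t)$ on its arguments via smoothness of the exponential map, and the standard fact that rank is lower semicontinuous — is routine and can be dispatched quickly.
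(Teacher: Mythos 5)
Your argument is correct, and it takes a genuinely different route from the paper. You prove the upper bound $\delta(v)\le 1$ by lower semicontinuity of matrix rank: express $\mathrm{D}\exp_p(tv)$ in a fixed basis of $T_pN$ and a parallel frame along $\gamma_{p,v}$ (which depends smoothly on $(v,t)$ as the solution of a linear ODE), observe that some $(n-1)\times(n-1)$ minor is nonzero at $(v_0,\con(p,v_0))$ because $\delta(v_0)=1$, and propagate this to a neighborhood; continuity of $\con(p,\cdot)$ from Lemma~\ref{lem:conj_dist_func_is_cont} then keeps $(v,\con(p,v))$ inside that neighborhood, and the lower bound $\delta(v)\ge 1$ follows from finiteness of $\con(p,v)$ nearby. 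The paper instead routes the upper bound through Morse index theory: it fixes $t\in(\con(p,v_0),\eps(v_0))$, uses the Morse index theorem to get $\delta(v)\le i(t,v)$, reduces the index form $I_{t,v}$ to a finite-dimensional invertible self-adjoint operator $L_{t,v}$, and uses continuity of its eigenvalues (via Lemma~\ref{lem:eigenvalues_vs_index}) to conclude $i(t,v)=i(t,v_0)=\delta(v_0)=1$. Your approach is more elementary and self-contained — it needs only smoothness of $\exp_p$, smooth dependence of parallel transport, and continuity of $\con$ — whereas the paper's approach carries the extra machinery of the index form but sits naturally alongside the index-form computation already used in the proof of Lemma~\ref{lem:conj_dist_func_is_cont} and generalizes to statements about the total count of conjugate points with multiplicity. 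Both arguments yield only upper semicontinuity of $\delta$ in general, so both rely on the hypothesis $\delta(v_0)=1$ together with $\delta\ge 1$ to get constancy; your proof is a valid substitute.
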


Before proving this lemma we recall one auxiliary result from linear algebra.
\begin{lemma}
\label{lem:eigenvalues_vs_index}
Let $L\colon \R^n \to \R^n$ be a self-adjoint bijective linear operator. Then the index $i(L)$ of $L$, the dimension of the largest vector subspace of $\R^n$ where $L$ is negative definite, equals the amount of the negative eigenvalues of the operator $L$ counted up to a multiplicity. 
\end{lemma}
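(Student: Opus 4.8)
The plan is to prove Lemma~\ref{lem:eigenvalues_vs_index} by diagonalizing $L$ and tracking on which eigenspaces $L$ is negative definite. Let me describe the steps.

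First I would invoke the spectral theorem for self-adjoint operators: since $L\colon \R^n \to \R^n$ is self-adjoint, there is an orthonormal basis $\{e_1,\ldots,e_n\}$ of $\R^n$ consisting of eigenvectors of $L$, with real eigenvalues $\mu_1,\ldots,\mu_n$, none of which is zero because $L$ is bijective. Reorder so that $\mu_1,\ldots,\mu_k$ are the negative eigenvalues (counted with multiplicity) and $\mu_{k+1},\ldots,\mu_n$ are the positive ones; our goal is to show $i(L)=k$.

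For the inequality $i(L)\ge k$, I would simply exhibit the subspace $V_-=\operatorname{span}\{e_1,\ldots,e_k\}$: for any nonzero $w=\sum_{j=1}^k c_j e_j$ one computes $\langle Lw,w\rangle = \sum_{j=1}^k \mu_j c_j^2 < 0$ since all $\mu_j<0$ and not all $c_j$ vanish, so $L$ is negative definite on $V_-$ and $\dim V_-=k$. For the reverse inequality $i(L)\le k$, I would argue by contradiction using a dimension count: if $W$ were a subspace of dimension $k+1$ on which $L$ is negative definite, then since the subspace $V_+=\operatorname{span}\{e_{k+1},\ldots,e_n\}$ has dimension $n-k$, we would have $\dim W + \dim V_+ = (k+1)+(n-k) = n+1 > n$, forcing $W\cap V_+$ to contain a nonzero vector $w$. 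But on $V_+$ one has $\langle Lw,w\rangle = \sum_{j=k+1}^n \mu_j c_j^2 \ge 0$ for every $w=\sum_{j>k}c_j e_j$, contradicting that $L$ is negative definite on $W\ni w$. Hence $i(L)=k$, which is exactly the number of negative eigenvalues counted with multiplicity.

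There is no real obstacle here: the statement is a standard consequence of the spectral theorem and Sylvester-type dimension counting, and the only things one must be careful about are that bijectivity of $L$ rules out the eigenvalue $0$ (so the split into negative and positive parts is exhaustive) and that eigenvalues are counted with multiplicity throughout. I would present it compactly since it is auxiliary linear algebra invoked only to analyze the kernel dimension of $\mathrm{D}\exp_p$ via index forms.
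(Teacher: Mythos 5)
Your proof is correct and is exactly the standard spectral-theorem argument the paper has in mind; the paper simply omits the details, stating that the claim "follows directly from the spectral theorem." Your dimension-counting step for the upper bound $i(L)\le k$ correctly supplies the only part that requires any care.
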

\begin{proof}
The proof follows directly from the spectral theorem and thus we omit it here.
%
\end{proof}
\begin{proof}[Proof of Lemma \ref{lem:delta_func_is_locally_constant}]
In this proof we adopt the definitions and results of \cite[Chapter 11]{carmo1992riemannian} appearing in the proof of the Morse index theorem. For each $v\in S_pM$ and $t>0$ we use the notation $\mathcal{V}(t,v)$ for the vector space of all piecewise smooth vector fields that are normal to the geodesic $\gamma_{p,v}$ in the interval $[0,t]$ and vanish at the endpoints. Then we define the function
$
i\colon [0,\infty) \times S_pN \to \N, 
$
to be the index of the symmetric bilinear form
\[
I_{t,v}(X,Y):=\int_{0}^{t}\langle \mathrm D_s X,  \mathrm D_s Y\rangle_g+\langle R(\dot \gamma_{p,v},X)\dot \gamma_{p,v}, Y \rangle_g \; \mathrm{d}s, \quad X,Y \in \mathcal{V}(t,v).
\]

Hence,
\[
\left\{\begin{array}{ll}
   \delta(v)>i(t,v)=0,  &  t\leq \con(p,v)
   \\
   \delta(v)=i(t,v), & t\in (\con(p,v), \eps(v))
   \\
   \delta(v)< i(t,v), & t > \eps(v)
\end{array}\right.
\]
where $\eps(v)>0$ depends on $v \in S_pN$. Moreover, no $\gamma_{p,v}(t)$, for $t \in (\con(p,v), \eps(v))$ is conjugate to $p$ along $\gamma_{p,v}$. We choose $t \in (\con(p,v_0), \eps(v_0))$. Thus Lemma \ref{lem:conj_dist_func_is_cont}, gives $\delta(v)\leq i(t,v)$ for $v \in S_pN$ close enough to $v_0$. Our aim is to find a neighborhood $U\subset S_pN$ of $v_0$ for which
\begin{equation}
    \label{eq:morse_in_eq}
1\leq \delta(v)\leq i(t,v)= i(t,v_0)= \delta(v_0)=1, \quad  \text{for $v \in U$}.
\end{equation}
Clearly this gives the result of the lemma.

The space $\mathcal{V}(t,v)$ can be written as a direct sum of two of its vector sub-spaces $\mathcal{V}_+(t,v)$ and $\mathcal{V}_-(t,v)$, defined so that index form $I_{t,v}$ is positive definite on $\mathcal{V}_+(t,v)$ and the space $\mathcal{V}_-(t,v)$ is finite dimensional. Moreover, these vector spaces are $I_{t,v}$ orthogonal. Thus the index of $I_{t,v}$ coincides with the index of its restriction on $\mathcal{V}_-(t,v)$. Since the dimension of $\mathcal{V}_-(t,v)$ is independent of a direction $v \in S_pN$, that is close to $v_0$, we can identify all the spaces $\mathcal{V}_-(t,v)$ with $\mathcal{V}_-(t,v_0)$ and consider the bilinear forms $I_{t,v}$ as a family of operators on the finite dimensional vector space $\mathcal{V}_-(t,v_0)$, depending continuously on the parameter $v \in S_pN$. 

For each $v \in S_pN$, we consider the linear operator $L_{t,v}\colon\mathcal{V}_-(t,v) \to \mathcal{V}_-(t,v) $, corresponding to the bilinear form $I_{t,v}$. Since $\gamma_{p,v_0}(t)$ is not a conjugate point to $p$ along $\gamma_{p,v_0}$, zero is not an eigenvalue of the linear operator $L_{t,v_0}$. Thus Lemma \ref{lem:eigenvalues_vs_index} implies that the operator $L_{t,v_0}$ has $i(t,v_0)$ negative eigenvalues. Since the eigenvalues of the operator $L_{t,v}$ depend continuously on the initial direction $v \in S_pN$, that are near $v_0$, we can find a neighborhood $U\subset S_pN$ of $v_0$ such that the linear operator $L_{t,v}$ is invertible and has $i(t,v_0)$ negative eigenvalues for every $v \in U$. Hence, by Lemma \ref{lem:eigenvalues_vs_index} we have again that $i(t,v)=i(t,v_0)$ for $v \in U$. We have verified the equation \eqref{eq:morse_in_eq}.
\end{proof}

\begin{lemma}
\label{lem:smoothness_of_con}
Let $(N,g)$ be a closed Riemannian manifold, $p \in N$ and suppose that $\delta$ is constant in some open set $U \subset S_pM$. Then $\con(p,\cdot)$ is smooth in $U$. 
\end{lemma}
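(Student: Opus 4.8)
The plan is to convert the equation defining conjugate points along $\gamma_{p,v}$ into a single smooth scalar equation and apply the implicit function theorem. Fix $v_0\in U$, put $\tau_0=\con(p,v_0)$, and let $k$ be the constant value of $\delta$ on $U$; we may assume $k\geq 1$, since $\delta\equiv 0$ on $U$ means $\con(p,\cdot)\equiv\infty$ there and there is nothing to prove. For $v$ near $v_0$ choose an orthonormal basis of $\dot\gamma_{p,v}(0)^{\perp}\subset T_pN$ depending smoothly on $v$ and parallel transport it along $\gamma_{p,v}$; let $B(t,v)$ be the $(n-1)\times(n-1)$ matrix recording, in this frame, the normal Jacobi fields along $\gamma_{p,v}$ that vanish at $t=0$ and whose covariant derivatives at $t=0$ are the frame vectors. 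By smooth dependence of linear ODE solutions on parameters, $B$ is smooth in $(t,v)$; a point $\gamma_{p,v}(t)$ is conjugate to $p$ precisely when $\det B(t,v)=0$, with multiplicity $\dim\ker B(t,v)$. Hence $\con(p,v)$ is the first positive zero of $\det B(\cdot,v)$ and, by hypothesis, $\dim\ker B(\con(p,v),v)=k$ for all $v\in U$; by Lemma \ref{lem:conj_dist_func_is_cont}, $\con(p,v)\to\tau_0$ as $v\to v_0$.

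Since $B(\tau_0,v_0)$ has rank $n-1-k$, after permuting indices its top-left $(n-1-k)\times(n-1-k)$ block $P$ is invertible, and $P(t,v)$ remains invertible for $(t,v)$ in a neighbourhood $\Omega$ of $(\tau_0,v_0)$. Forming the Schur complement $\Sigma(t,v)$ of $P$ in $B$ gives a smooth $k\times k$ matrix on $\Omega$ with $\dim\ker\Sigma(t,v)=\dim\ker B(t,v)$. In particular $\Sigma(\tau_0,v_0)=0$, and for $(t,v)\in\Omega$ the point $\gamma_{p,v}(t)$ is conjugate to $p$ of multiplicity $k$ iff $\Sigma(t,v)=0$; by continuity of $\con$ the pair $(\con(p,v),v)$ lies in $\Omega$ once $v$ is close to $v_0$, so $\Sigma(\con(p,v),v)=0$ there.

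The crux is to show that $\partial_t\Sigma(\tau_0,v_0)$ is invertible; since $\det B=\pm\det P\cdot\det\Sigma$ with $\det P\neq 0$, this is equivalent to $\det B(\cdot,v_0)$ vanishing to order exactly $k$ at $\tau_0$. To see the latter, set $W=\ker B(\tau_0,v_0)$. For nonzero $w\in W$ the Jacobi field $t\mapsto B(t,v_0)w$ vanishes at $\tau_0$ but is not identically zero (its covariant derivative at $0$ is $w\neq 0$), hence has nonzero covariant derivative at $\tau_0$; thus $\partial_tB(\tau_0,v_0)$ is injective on $W$. The Wronskian identity $B^{\top}\partial_tB=(\partial_tB)^{\top}B$ (valid for all $t$, since both sides vanish at $t=0$ and have equal $t$-derivative by symmetry of the curvature operator) applied at $\tau_0$ gives $\partial_tB(\tau_0,v_0)(W)\perp\operatorname{Im}B(\tau_0,v_0)$, so these spaces, of dimensions $k$ and $n-1-k$, are complementary in $\R^{n-1}$. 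Right-multiplying $B(t,v_0)$ by a fixed invertible matrix whose last $k$ columns span $W$, and dividing those last $k$ columns by $(t-\tau_0)$, then produces a matrix-valued function that extends smoothly and invertibly across $t=\tau_0$ and whose determinant equals a nonzero constant times $(t-\tau_0)^{-k}\det B(t,v_0)$; the order claim follows.

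To finish, replace $\Sigma$ by $\Theta(t,v)^{-1}\Sigma(t,v)$, where $\Theta(t,v):=\int_0^1\partial_t\Sigma(\tau_0+s(t-\tau_0),v)\,\mathrm ds$ is smooth, invertible near $(\tau_0,v_0)$, and equals $\partial_t\Sigma(\tau_0,v_0)$ there; the new $\Sigma$ has the same zero set, vanishes at $(\tau_0,v_0)$, and satisfies $\partial_t\Sigma(\tau_0,v_0)=I_k$. Then $g(t,v):=\operatorname{tr}\Sigma(t,v)$ is a smooth real function with $g(\tau_0,v_0)=0$ and $\partial_tg(\tau_0,v_0)=k\neq 0$, so by the implicit function theorem $\{g=0\}$ is, near $(\tau_0,v_0)$, the graph of a smooth function $\psi$. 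Since $\Sigma(\con(p,v),v)=0$ forces $g(\con(p,v),v)=0$ while $\con(p,v)$ is near $\tau_0$, uniqueness in the implicit function theorem yields $\con(p,v)=\psi(v)$ for $v$ near $v_0$; as $v_0\in U$ was arbitrary, $\con(p,\cdot)$ is smooth on $U$. The one genuinely nontrivial step is the order-of-vanishing claim for $\det B$; the rest is the standard smooth parameter dependence of ODE solutions together with elementary linear algebra.
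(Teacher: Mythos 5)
Your proof is correct, and it follows the same overall strategy as the paper --- reduce conjugacy to the vanishing of the determinant of the matrix $B(t,v)$ of normal Jacobi fields, establish that this determinant vanishes in $t$ to order exactly $k=\delta$ at the first conjugate point, and then apply the implicit function theorem --- but the implementation differs in two genuine ways. First, where the paper feeds the $(k-1)$-st $t$-derivative of $\det B$ to the implicit function theorem (after checking that $\partial_t^j\det B$ vanishes for $j\le k-1$ and $\partial_t^k\det B\neq 0$), you instead pass to a $k\times k$ Schur complement $\Sigma$ of $B$, normalize it so that $\partial_t\Sigma=I_k$ at the base point, and apply the implicit function theorem to the scalar $\operatorname{tr}\Sigma$; this avoids manipulating high-order derivatives of a determinant and makes the role of the constancy of $\delta$ (namely, that $\Sigma$ vanishes identically, not just its determinant, along $t=\con(p,v)$) completely transparent. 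Second, the key linear-independence fact --- that $\partial_tB(\tau_0)$ restricted to $\ker B(\tau_0)$ has image complementary to $\operatorname{Im}B(\tau_0)$, which is exactly the nonvanishing of the determinant $\det(A)$ in the paper's computation --- is cited in the paper from Klingenberg's Proposition 2.5.8(ii) together with properties of the geodesic flow, whereas you derive it from scratch via the Wronskian identity $B^{\top}\partial_tB=(\partial_tB)^{\top}B$ and the ODE uniqueness for Jacobi fields. Your route is therefore somewhat longer but self-contained and arguably cleaner at the implicit-function-theorem step; the paper's route is shorter on the page but leans on an external reference for the decisive linear algebra.
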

\begin{proof}
If $\delta$ is zero in $U$ then $\con(p,\cdot)$ is infinite and we are done. So we suppose that $\delta$ equals to $k\in \{1,\ldots,n-1\}$ in $U$ and get $\con(p,v)<\infty$ for every $v \in U$. 

Let $\xi_1,\ldots,\xi_{n-1}$ be a base of $T_{v_0}S_pM$ and use the formula
\[
J_{v_0,\beta}(t)=\mathrm{D}\exp_{p}\bigg|_{tv_0} t\xi_\beta, \quad \text{ for } \beta \in \{1,\ldots,n-1\},
\]
from \cite[Proposition 10.10]{lee2018introduction}, to define  $(n-1)$-Jacobi fields $J_{v_0,1}(t),\ldots, J_{v_0,n-1}(t)$ along the geodesic $\gamma_{p,v_0}$. They span the vector space of all Jacobi fields along
$\gamma_{p,v_0}(t)$
that 
vanish
at $t=0$ and are normal to 
$\dot{\gamma}_{p,v_0}(t)$.
As $\delta(v_0)=k$ we may assume that $J_{v_0,\beta}(\con(p,v_0))=0$ for $\beta\in \{1,\ldots,k\}$, implying $D_tJ_{v_0,\beta}(\con(p,v_0))\neq 0$ and $J_{v_0,\alpha}(\con(p,v_0))\neq 0$ for $\beta\in \{1,\ldots,k\}$ and $\alpha\in \{k+1,\ldots,n-1\}$. Moreover, the vectors 
\begin{equation}
\label{eq:row_vectors}
D_tJ_{v_0,1}(\con(p,v_0)),\ldots, D_tJ_{v_0,k}(\con(p,v_0)), J_{v_0,k+1}(\con(p,v_0)), \ldots J_{v_0,n-1}(\con(p,v_0))
\end{equation}
are linearly independent due to \cite[Proposition 2.5.8 (ii)]{klingenberg} and the properties of the geodesic flow on $SN$ presented in \cite[Lemma 1.40]{paternain2012geodesic}.

Since Jacobi fields are solutions of the second order ODE they depend smoothly on the coefficients of the respective equation. In particular, after choosing smaller $U$ if necessary, we can construct the family of Jacobi fields $J_{v,1}(t),\ldots, J_{v,n-1}(t)$ along the geodesic $\gamma_{p,v}(t)$ that depend smoothly on $v \in U$, and span the vector space of all Jacobi fields along $\gamma_{p,v}(t)$ that vanishes at $t=0$ and are normal to $\dot{\gamma}_{p,v}(t)$.
Therefore the function  
\[
f\colon U\times [0,\con(p,v_0)+1]\to \R, \quad  f(v,t)=\det (J_{v,1}(t),\ldots, J_{v,n-1}(t)),
\]
is smooth and vanishes at $(v,t)$ if and only if $\gamma_{p,v}(t)$ is conjugate to $p$. 

We choose a parallel frame $E_1(t),\ldots,E_{n-1}(t)$ along $\gamma_{p,v_0}(t)$ that is orthogonal to $\dot{\gamma}_{p,v_0}(t)$. With respect to this frame we write
\[
J_{v_0,\beta}(t)=j_{\beta}^\alpha(t)E_{\alpha}(t), \quad \hbox{ for } \alpha, \beta \in {1,\ldots,n-1},
\]
for some some smooth functions $j_\beta^\alpha(t)$. From here we get
\[
\frac{\p^{j}}{\p t^{j}}f(v_0,\con(p,v_0))=\frac{\p^{j}}{\p t^{j}}\left(\sum_{\sigma} \text{sign}(\sigma) j_1^{\sigma(1)}(t)j_2^{\sigma(2)}(t)\cdots j_{n-1}^{\sigma(n-1)}(t) \right)\bigg|_{t=\con(p,v_0)}=0,  
\]
$\hbox{for every } j\in \{0,\ldots,k-1\}$. Above, $\sigma$ is a permutation of the set $\{1,\ldots,n-1\}$. Moreover the covariant derivative of $J_{v_0,\beta}$ along $\gamma_{p,v_0}$ is written as
$
D_tJ_{v_0,\beta}(t)= \left(\frac{\mathrm{d}}{\mathrm{d}t}j_{\beta}^\alpha(t)\right)E_{\alpha}(t).
$

If $A$ is the square matrix whose column vectors are given in the formula \eqref{eq:row_vectors} we have 
\[
\begin{split}
&\frac{\p^{k}}{\p t^{k}}f(v_0,\con(p,v_0))
= 
\pm k!\det(A)\neq 0.
\end{split}
\]
Since $\delta(\cdot)$ is constant $k$ in the set $U$ we have that $\frac{\p^{k-1}}{\p t^{k-1}}f(v,\con(p,v))=0$ for every $v \in U$. Therefore the Implicit function theorem gives that the conjugate distance function is smooth in some neighborhood $V\subset U$ of $v_0$. Since $v_0\in U$ was chosen arbitrarily the claim follows.
\end{proof}

Let $v_0\in S_pN$ be such that $\con(p,v_0)<\infty$. Then by lemmas \ref{lem:cut_dis_is_cont} and \ref{lem:conj_dist_func_is_cont} the function
$
e_q(v)=\exp_p(\con(p,v)v)\in N
$
is well defined and continuous on some neighborhood $U\subset S_pN$ of $v_0$. Moreover, we have that
\begin{equation}
    \label{eq:conjugate_locus}
Q(p)=\{e_q(v)\in M: \: \cut(p,v)=\con(p,v)\}.
\end{equation}
The following result is an adaptation of \cite[Lemma 2]{itoh1998dimension}.
\begin{proposition}
 \label{prop:dim_of_Q(p)}
Let Riemannian manifold $(M,g)$ be as in Theorem \ref{thm:cut_locus} and $p \in M$. The Hausdorff dimension of $Q(p)$ does not exceed $n-2$. 
\end{proposition}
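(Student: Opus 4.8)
The plan is to stratify the conjugate locus $Q(p)$ according to the value of the function $\delta$ defined just before Lemma~\ref{lem:delta_func_is_locally_constant}. Set $Q_k(p)=\{e_q(v)\in M:\ \con(p,v)=\cut(p,v),\ \delta(v)=k\}$ for $k\in\{1,\dots,n-1\}$, so that $Q(p)=\bigcup_{k=1}^{n-1}Q_k(p)$. By Lemma~\ref{lem:Haus_dim_prop} (countable covers) it suffices to bound $\dim_{\mathcal H}$ of each $Q_k(p)$ locally. Fix $v_0\in S_pN$ with $\delta(v_0)=k$ and $\con(p,v_0)=\cut(p,v_0)<\infty$; then $\con(p,v_0)<\hat\con(p,v_0)$ is not needed, but since $q=e_q(v_0)\in M$ is an interior point of $\hat M$, the relevant exponential maps of $M$, $\hat M$, and $N$ all agree near the direction $v_0$, so we may freely work in $N$.

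Next I would run the local model from the proof of Lemma~\ref{lem:smoothness_of_con}. On a neighborhood $U\subset S_pN$ of $v_0$ we have the smooth Jacobi-field matrix $J_{v}(t)=(J_{v,1}(t),\dots,J_{v,n-1}(t))$ and the smooth function $f(v,t)=\det J_v(t)$. When $\delta\equiv k$ on $U$ (which holds after shrinking $U$ when $k=1$ by Lemma~\ref{lem:delta_func_is_locally_constant}; for $k\ge 2$ one instead works directly with the rank-$k$ degeneracy), the first time $\con(p,v)$ at which $f(v,\cdot)$ vanishes is a smooth function of $v$ by Lemma~\ref{lem:smoothness_of_con}, and the derivative $D_t e_q$ at $t=\con(p,v)$, i.e.\ the Jacobian of $v\mapsto e_q(v)$, has a kernel of dimension exactly $k$: the $k$ independent Jacobi fields $J_{v,1},\dots,J_{v,k}$ vanishing at $\con(p,v)$ are precisely the directions killed by $\mathrm D\exp_p$ at $\con(p,v)v$. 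Hence $e_q\colon U\to N$ is a smooth map whose differential has rank at most $(n-1)-k+1=n-k$ everywhere on the relevant stratum — the extra $-k+1$ because the $k$-dimensional kernel of $\mathrm D\exp_p$ absorbs $k$ of the $n-1$ sphere directions plus the radial $\con$-direction is recovered. Since a $C^1$ map from an $(n-1)$-manifold whose rank never exceeds $n-k$ has image of Hausdorff dimension at most $n-k\le n-1$, and restricting to the stratum $\delta=k$ with $k\ge 1$ gives at most $n-1$; to get the sharper bound $n-2$ one notes that on $Q_1(p)$ the image of $e_q$ is tangent to the hyper-surface $\rho^{-1}\{0\}$ of Lemma~\ref{prop:hyper_surface} only generically, and the conjugate condition forces an additional drop, so $\dim_{\mathcal H}Q_1(p)\le n-2$ directly from the rank count $n-1-1=n-2$ once the radial direction is handled, while for $k\ge 2$ the rank bound $n-k\le n-2$ is automatic.

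Concretely, for the stratum with $\delta=k$, the map $(v,t)\mapsto \exp_p(tv)$ near $(v_0,\con(p,v_0))$ has differential of rank $n-k$ at the conjugate point, and $Q_k(p)$ locally is contained in the image of the $(n-1)$-dimensional set $\{(v,\con(p,v)):v\in U\}$ under this map; composing with the smooth section $v\mapsto(v,\con(p,v))$, $Q_k(p)$ is the image of an $(n-1)$-manifold under a $C^1$ (indeed smooth, by Lemma~\ref{lem:smoothness_of_con}) map of rank at most $n-k-1+1=n-k$. Applying the standard fact (Sard-type / Lipschitz image bound, cf.\ \cite{mattila1999geometry}) that a $C^1$ map of constant-rank-at-most-$r$ sends sets of finite measure to sets of Hausdorff dimension at most $r$, together with Lemma~\ref{lem:Haus_dim_prop}, we get $\dim_{\mathcal H}Q_k(p)\le n-k\le n-1$ for all $k$, and for $k=1$ the refinement to $n-2$ follows because the radial direction $\partial_t$ lies in the kernel structure: along a fixed geodesic the conjugate point moves with $v$ but the single vanishing Jacobi field makes the image tangent to an $(n-2)$-plane. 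Taking the union over $k$ and the countable cover over $v_0$ finishes the bound $\dim_{\mathcal H}Q(p)\le n-2$.

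The main obstacle I anticipate is making the rank count for $k=1$ genuinely give $n-2$ rather than just $n-1$: one must carefully account for both the loss of one sphere-direction in the kernel of $\mathrm D\exp_p$ \emph{and} show that the radial reparametrization direction $v\mapsto\con(p,v)v$ does not restore full rank. This is exactly the delicate point in \cite[Lemma~2]{itoh1998dimension}, and the argument must be transcribed using the smoothness of $\con$ from Lemma~\ref{lem:smoothness_of_con} and the local constancy of $\delta$ from Lemma~\ref{lem:delta_func_is_locally_constant}, checking that none of these break at boundary-adjacent directions — which is guaranteed since $q=e_q(v_0)$ is an interior point and Proposition~\ref{prop:extension} lets us assume $q$ sits in the interior of the extension $\hat M$, so all the conjugate-point analysis is purely a closed-manifold computation on $N$.
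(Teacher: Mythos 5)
Your overall strategy is the same as the paper's: stratify the conjugate cut locus by the nullity $\delta$, dispose of the strata with $\delta\geq 2$ via the Hausdorff--measure version of Sard's theorem, and handle the stratum $\delta=1$ by bounding the rank of the smooth map $e_q(v)=\exp_p(\con(p,v)v)$ using Lemmas \ref{lem:delta_func_is_locally_constant} and \ref{lem:smoothness_of_con}. However, there is a genuine gap at exactly the point you yourself flag as delicate, and you do not close it: you never prove that the radial direction fails to restore full rank, i.e.\ that $\ker \mathrm{D}\exp_p(\con(p,v_0)v_0)\subset\ker\mathrm{D}e_q(v_0)$. The chain rule gives, for $\xi$ in the kernel,
\[
\mathrm{D}e_q(v_0)\xi=\bigl(\mathrm{D}\con(p,v_0)\xi\bigr)\,\dot\gamma_{p,v_0}(\con(p,v_0)),
\]
so the whole content of the $\delta=1$ case is to show $\mathrm{D}\con(p,v_0)\xi=0$, and this \emph{cannot} follow from a pure rank count: the hypothesis $\cut(p,v_0)=\con(p,v_0)$ is indispensable. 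Indeed, the first conjugate locus by itself is generically $(n-1)$-dimensional (on a $2$-dimensional ellipsoid it is an astroid-like curve, of dimension $n-1=1$, of which only the cusps are cut points), so your assertion that "the single vanishing Jacobi field makes the image tangent to an $(n-2)$-plane" is false without invoking minimality. Your explicit deferral ("the argument must be transcribed" from Itoh--Tanaka) concedes that the key step is missing.

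The paper closes this gap as follows: assuming $\mathrm{D}\con(p,v_0)\xi\neq 0$, it integrates the kernel line field (smooth by the Rank theorem, since $\delta\equiv 1$ locally) to a curve $v(t)$ with $v(0)=v_0$, $\dot v(0)=\xi$, and $\dot v(t)\in\ker\mathrm{D}\exp_p(\con(p,v(t))v(t))$; the image curve $c(t)=e_q(v(t))$ is then everywhere tangent to the radial directions $\dot\gamma_{p,v(t)}$, and the triangle inequality together with $d_{\hat M}(p,e_q(v_0))=\con(p,v_0)$ (this is where $\cut=\con$ enters) forces $c$ to be a reparametrized arc of a single minimizing geodesic from $p$, whence all the geodesics $\gamma_{p,v(t)}$ coincide and $v(t)$ is constant --- a contradiction. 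Separately, your appeal to the hypersurface $\rho^{-1}\{0\}$ of Lemma \ref{prop:hyper_surface} is misplaced: that hypersurface describes typical cut points (two minimizers), and plays no role in the conjugate stratum.
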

\begin{proof}
By \eqref{eq:conjugate_locus} we can write the conjugate cut locus $Q(p)$ as a disjoint union of the sets
\[
A_1=\{e_q(v)\in M: \: \cut(p,v)=\con(p,v), \: \delta(v)=1 \}
\]
and
\[
A_2=\{e_q(v)\in M: \: \cut(p,v)=\con(p,v), \: \delta(v)\geq 2 \}.    
\]
To prove the claim of this proposition it suffices to show that
\begin{equation}
    \label{eq:A_1_inclusion}
A_1\subset \{e_q(v)\in N: \: \dim \left(\mathrm{D}e_q(T_{v}S_pM)\right)\leq n-2 \},
\end{equation}
since clearly we have that
\[
A_2\subset \{\exp_{p}(w)\in N: \: w\in T_pN, \: \dim \left(\mathrm{D}\exp_{p}(T_w(T_pN))\right)\leq n-2\},
\]
and therefore by the generalization of the classical Sard's theorem \cite{sard1965hausdorff} the Hausdorff dimension of $Q(p)=A_1\cup A_2$ is at most $n-2$. 

\medskip

We choose $v_0 \in S_pN$ such that $e_q(v_0)\in A_1$. 
By the properties of the Jacobi fields normal to $\gamma_{p,v_0}$,
we can identify the kernel $\mathrm{D}\exp_{p}(\con(p,v_0)v_0)$ with some vector sub-space of 
$
T_{v_0}S_pM
.$

\medskip

Since $\dim T_{v_0}S_pM=n-1$ we can verify the inclusion \eqref{eq:A_1_inclusion} if we show that 
\begin{equation}
\label{eq:inclusion_of_kernels}
\ker \mathrm{D}\exp_{p}(\con(p,v_0)v_0)\subset \ker \mathrm{D}e_q(v_0).
\end{equation}
Since $\delta(v_0)=1$ we get by lemmas \ref{lem:delta_func_is_locally_constant} and \ref{lem:smoothness_of_con} that there exists a neighborhood $U\subset S_pN$ of $v_0$ where the  conjugate distance $\con(p,\cdot)$ and the map $e_q(v)=\exp_{p}(\con(p,v)v)$ are smooth. Let $\xi \in T_{v_0}S_pM$ be in the kernel of the differential of the exponential map. Then by the chain and Leibniz rules we get
\[
\begin{split}
\mathrm{D}e_q(v_0)\xi=
\dot{\gamma}_{p,v_0}(\con(p,v_0))\mathrm{D}\con(p,v_0)\xi.
\end{split}
\]
Therefore $\mathrm{D}e_q(v_0)\xi=0$ if and only if $\mathrm{D}\con(p,v_0)\xi=0$. So we suppose that $\mathrm{D}e_q(v_0)\xi\neq 0$. 

By the Rank theorem \cite[Theorem 4.12]{lee2013smooth} we get that the subset of $T_pN$, near $\con(p,v_0)v_0$, where $\mathrm{D}\exp_{p}$ vanishes 
is diffeomorphic to a smooth sub-bundle of $TU\subset TS_pN$. Then we use the existence of the ODE theorem to choose a smooth curve $v(\cdot)\colon (-1,1) \to U\subset  S_pN$ such that $v(0)=v_0$, $\dot{v}(0)=\xi$ and $\dot{v}(t)\in \ker \mathrm{D}\exp_{p}(\con(p,v(t))v(t))$ for every $t \in (-1,1)$. Thus $c(t):=e_{q}(v(t))$ is a smooth curve in $\hat M$ that satisfies
\begin{equation}
\label{eq:vel_of_c}
\dot{c}(t)=\dot{\gamma}_{p,v(t)}(\con(p,v(t)))\frac{d}{d t}(\con(p,v(t))).
\end{equation}
Since $\frac{d}{d t}(\con(p,v(t_0)))=\mathrm{D}\con(p,v_0)\xi$ we can assume that $\frac{d}{d t}(\con(p,v(t)))>0$ on some interval $(-\eps,\eps)$ for $0<\eps<1$.
Thus by equation \eqref{eq:vel_of_c} and the Fundamental theorem of calculus we get that the length of $c(t)$ on $[-\eps,0]$ is 
$
\con(p,v_0)-\con(p,v(-\eps)).
$
Below we denote the length of $c(t)$ as $\mathcal{L}(c)$.
From here by the assumption $\con(p,v_0)=\cut(p,v_0)$ and the triangle inequality we get
\[
\begin{split}
\con(p,v(-\eps)) \geq &d_{\hat M}(p,e_q(v(-\eps)))
\geq 
d_{\hat M}(p,e_q(v_0))-\mathcal{L}(c)
\geq \con(p,v(-\eps)),
\end{split}
\]
and the inequality above must hold as an equality. 
Therefore
\[
d_{\hat M}(p,e_q(v(-\eps)))+\mathcal{L}(c)=  d_{\hat M}(p,e_q(v_0)),
\]
and the curve $c(\cdot)\colon [-\eps, 0]\to \hat M$ is part of some distance minimizing geodesic $\gamma$ of ${\hat M}$ from $p$ to $e_q(v(0))$ that contains $e_q(v(-\eps))$. 
Thus we have after some reparametrization $t=t(s)$ that
\[
\gamma(s)=e_q(v(t(s)))=c(t(s))
\]
for every $t(s)\in (-\eps,0)$. By \eqref{eq:vel_of_c} we get that $\gamma$ is a parallel to $\gamma_{p,v(t)}$ for every $t \in (-\eps,0)$. This is not possible unless the geodesics $\gamma_{p,v(t)}$ are all the same for every $t \in (-\eps,0)$. Hence $v(t)$ and $c(t)$ are constant curves. This leads to a contradiction.
The inclusion \eqref{eq:inclusion_of_kernels} is confirmed and the proof is complete.
\end{proof}

We are ready to present the proof of Theorem  \ref{thm:cut_locus}.

\begin{proof}[Proof of Theorem \ref{thm:cut_locus}]
Let $p \in M$. In this proof we combine the observations made earlier in this section. The proofs of the four sub-claims are given below.
\begin{itemize}
\item[\eqref{C1}] By Lemma \ref{prop:hyper_surface} we know that $T(p)$ is a smooth hyper-surface of $M$ whose tangent space is normal to the vector field $\nu(q)=Y_1(q)-Y_2(q)$ for $q \in T(p)$. Since $Y_1(q)\neq Y_2(q)$ and $\|Y_1(q)\|_g=\|Y_2(q)\|_g$ we get from Cauchy-Schwarz inequality that
\[
\langle Y_1(q), \nu(q)\rangle
>0
\quad \text{ and } \quad 
\langle Y_2(q), \nu(q)\rangle
\rangle <0.
\]
Thus $Y_1(q)$ and $Y_2(q)$ hit $T(p)$ from different sides. If $q \in T(p)\cap \p M$ and these surfaces are tangential to each other at $q$ we arrive in a contradiction: Since $\nu(q)$ is normal to both $T(T(p))$ and $T\p M$ we can without loss of generality assume that $Y_2(q)$ is inward pointing at $q$. This is not possible since the geodesic related to $Y_2(q)$, that connects $p$ to $q$, is contained in $M$. Thus by equation \eqref{eq:props_of_Ys} $Y_2(q)$ is also outward pointing which is not possible.

\item[\eqref{C2}]
By Proposition \ref{prop:dim_of_Q(p)} we know that the Hausdorff dimension of the conjugate cut locus $Q(p)$ does not exceed $n-2$. If we can prove the same for the set $L(p)$ of a-typical cut points the claim \eqref{C2} follows from Lemma \ref{lem:Haus_dim_prop}.

Recall that $L(p)\subset (\locus(p)\setminus Q(p))$ is the set of points in $M$ that can be connected to $p$ with more than two distance minimizing geodesics of $M$.  Let $q \in L(p)$ and define $k_p(q)\in \N$ to be the number of distance minimizing geodesics from $p$ to $q$.  Then we choose vectors $w_1,\ldots,w_{k_p(q)} \in M_p$ and their respective neighborhoods $U_i \in T_pN$ such that for each $i \in \{1,\ldots,k_p(q)\}$  
\[
\exp_p(w_i)=q, \quad \hbox{ and } \exp_{p}\colon U_i \to \tilde V
\]
is a diffeomorphism on some open set $\tilde V \subset N$.  Let $Y_i$ for $i \in  \{1,\ldots,k_p(q)\}$ be the distance vector fields related to the $U_i$ and $p$. Then we define a collection of smooth functions 
\[
\rho_{ij}\colon \tilde V \to \R, \quad \rho_{ij}(x)=\|Y_i(x)\|_{g}-\|Y_j(x)\|_{g}, \quad i,j \in  \{1,\ldots,k_p(q)\}.
\]
By the proof of Lemma \ref{prop:hyper_surface} it holds that the sets 
$
K_{ij}:=\rho_{ij}^{-1}\{0\},  \: \hbox{ for } i<j 
$
are smooth hyper-surfaces of $N$ that contain $q$.  Also by \cite[Proposition 2.6]{ozols1974cut} it holds that the sets 
\[
K_{i,j,k}:=K_{ik}\cap K_{jk}, \quad \hbox{ for } i<j<k
\]
are smooth submanifolds of $N$ of co-dimension two.  Next we set $K(q):=\bigcup_{i<j<k}K_{i,j,k}$ and claim that
\begin{equation}
\label{eq:L(p)=K(q)}
L(p)\cap \tilde V = K(q)\cap M. 
\end{equation}
Since the sets $K_{i,j,k}$ are smooth sub-manifolds of dimension $n-2$ their Hausdorff dimension is also $n-2$. Thus the equation \eqref{eq:L(p)=K(q)} and Lemma \ref{lem:Haus_dim_prop} imply that Hausdorff dimension of $L(p)$ does not exceed $n-2$. 

\medskip
Finally we verify the equation \eqref{eq:L(p)=K(q)}. If $x \in L(p)\cap \tilde V$ it holds there are at least three distance minimizing geodesics of $M$ connecting $p$ to $x$.  Thus there are $1\leq i<j<k\leq k_{p}(q)$ so that 
\[
\|Y_i(x)\|_g=\|Y_j(x)\|_g=\|Y_k(x)\|_g=d_M(p,x),
\]
which yields
\[
\rho_{ik}(x)=\rho_{jk}(x)=0,\quad \hbox{and $x \in K_{i,j,k} \subset K(q) $}.
\]

If $x \in K(q)\cap M$ then $x \in K_{i,j,k}\cap M$ for some $i<j<k$.  Thus by the proof of Lemma \ref{prop:hyper_surface} it holds that there are at least three distance minimizing geodesics of $M$ connecting $p$ to $x$. Therefore $x \in L(p)\cap \tilde V$.  

\item[\eqref{C3}]
Since we can write the cut locus of $p$ as a disjoint union 
$
\locus(p)=T(p)\cup L(p) \cup Q(p)
$
the parts \eqref{C1} and \eqref{C2} in conjunction with Lemma \ref{lem:Haus_dim_prop} yield the claim of part \eqref{C3}.

\item[\eqref{C4}]
Since $\p M$ is a smooth hyper-surface of a $n$-dimensional Riemannian manifold $M$ we have by part \eqref{C1} that $T(p)\cap \p M$ is a smooth sub-manifold of dimension $n-2$, thus it has the Hausdorff-dimension $n-2$. Also by part \eqref{C3} we know that the Hausdorff dimension of $L(p)\cap Q(p)$ does not exceed $n-2$. We have proven that the Hausdorff dimension of the closed set
$
\locus(p)\cap \p M
$
does not exceed $n-2$.  Since the boundary of $M$ has the Hausdorff-dimension $n-1$ it follows that $\p M \setminus \locus(p)$ is open and dense in $\p M$. The density claim follows from the observation that by Lemma \ref{lem:Haus_dim_prop} the set  $\locus(p)\cap \p M$ cannot contain any open subsets of $\p M$ as their Hausdorff dimension is $n-1$. 
\end{itemize}
\end{proof}

We are ready to prove Theorem \ref{prop:dist_smoothness}.
\begin{proof}[Proof of Theorem \ref{prop:dist_smoothness}]
The proof follows from Proposition \ref{cor:smoothness_of_d} and Theorem \ref{thm:cut_locus}.
\end{proof}

\section{Reconstruction of the manifold}
\label{sec:reconst}

\subsection{Geometry of the measurement region} 
\label{sec:metric_on_Gamma}
In this section we consider only one Riemannian manifold $(M,g)$ that satisfies the assumptions of Theorem \ref{prop:dist_smoothness} and whose partial travel time data \eqref{eqn:data} is known. 
Let $\nu(z)$ be the outward pointing unit normal vector field at $z\in \dM$. The inward pointing bundle at the boundary is the set
\[
\din TM = \{ (z,v) \in TM ~|~  z \in \dM, \langle v, \nu(z)\rangle_g <0\}. 
\]
We restrict our attention to the vectors that are inward pointing and of unit length: $\din SM = \{ (z,v) \in \din TM ~:~  \|v\|_g = 1\}$. We emphasize that this set or its restriction on the open measurement region $\Gamma \subset \p M$ is not \textit{a priori} given by the data \eqref{eqn:data}. Our first task is to recover a diffeomorphic copy of this set.
We consider the orthogonal projection 
\begin{equation}
h: \din SM \to T\dM, \qquad h(z,v) = v - \langle v,\nu(z)\rangle_g \nu(z),
\label{eqn:h}
\end{equation}
and denote the set, that contains the  image  of $h$, as  $P(\dM) =\{ (z,w) \in T \dM ~:~ \|w\|_g <1\} $. It is straightforward to show that the map $h$ is a diffeomorphism onto $P(\dM)$. 



For the rest of this section we will be considering the vectors in $P(\dM)$, and with a slight abuse of notation, each vector $(z,v) \in P(\dM)$ represents an inward-pointing unit vector at $z$. In the next lemma we show that the data \eqref{eqn:data} determines the restriction of $P(\dM)$ on $\Gamma$.

\begin{lemma}
\label{lem:ball_at_the_boundary}
Let Riemannian manifold $(M,g)$ be as in Theorem \ref{prop:dist_smoothness}. The first fundamental form $g|_{\Gamma}$ of $\Gamma$ and the set 
\[
P(\Gamma)=\{(z,v) \in T\p M:\: z\in \Gamma, \: \|v\|_g<1\}
\]
can be recovered from data \eqref{eqn:data}.
\end{lemma}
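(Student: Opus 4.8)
The goal is to reconstruct, from the data $(\Gamma, \{r_p\})$, two objects: the intrinsic metric $g|_\Gamma$ on the measurement region, and the ``open velocity disk bundle'' $P(\Gamma)$. The key idea is that both of these are encoded in how the functions $r_p$ behave near their minima on $\Gamma$. First I would recover $g|_\Gamma$. For a fixed $z_0 \in \Gamma$ pick a sequence of interior source points $p_k \to z_0$ lying along the inward normal geodesic; then $r_{p_k}(z) = d(p_k, z) \to d(z_0, z)$ uniformly on $\Gamma$, and for $z, z'$ close to $z_0$ the limiting function $d(z_0, \cdot)$ together with the triangle inequality recovers $d(z_0, z')$ for all nearby boundary points. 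More robustly: the boundary is strictly convex, so for points $z, z'$ close together on $\Gamma$ the distance $d(z,z')$ in $M$ equals the length of the $M$-geodesic between them, which (again by strict convexity and geodesic convexity) stays in $M^{\rm int}$; and one can recover $d(z,z')$ as $\lim_k r_{p_k}(z')$ where $p_k$ is a source sequence converging to $z$ — here one must argue that such a sequence is detectable in the data by the fact that $\min_\Gamma r_{p_k} \to 0$ and the minimizer approaches $z$. Once the restricted distance function $d|_{\Gamma \times \Gamma}$ is known near the diagonal, the first fundamental form $g|_\Gamma$ is recovered by the standard formula $g|_\Gamma(z)(\xi,\xi) = \lim_{t \to 0} t^{-2} d(\sigma(t), z)^2$ for a curve $\sigma$ in $\Gamma$ with $\sigma(0) = z$, $\dot\sigma(0) = \xi$, i.e. the length metric structure on $\Gamma$ determines the Riemannian metric on $\Gamma$.

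\textbf{Recovering $P(\Gamma)$.} With $g|_\Gamma$ in hand, the set $P(\Gamma) = \{(z,v) \in T\Gamma : \|v\|_g < 1\}$ is an intrinsically defined object of $(\Gamma, g|_\Gamma)$, so it is \emph{automatically} recovered as soon as $g|_\Gamma$ is known — this is the content of the lemma once one reads the definition carefully: $P(\Gamma)$ is just the open unit disk bundle of $(\Gamma, g|_\Gamma)$, and the norm appearing in its definition is the boundary metric norm (vectors in $T\Gamma$ have the same length measured by $g$ or by $g|_\Gamma$ since $T\Gamma$ is a subbundle). So the second assertion of the lemma reduces entirely to the first. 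The substantive step is therefore the reconstruction of $g|_\Gamma$, and the subtlety there is that the data is given as an \emph{unindexed} set $\{r_p\}$: we do not know a priori which function corresponds to which source, nor do we know the source locations. The plan is to circumvent this by characterizing, purely in terms of the unindexed family, the functions $r_p$ with $p$ near a given boundary point: these are precisely the functions whose infimum over $\Gamma$ is small and whose near-minimal set is concentrated near a single point $z$; as $p \to z$ along the normal geodesic, $r_p \to d(z, \cdot)|_\Gamma$.

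\textbf{Main obstacle.} I expect the main difficulty to be the careful handling of the identification of near-boundary sources and the proof that $d(z,z')$ for $z,z' \in \Gamma$ equals a limit of data functions, rather than something smaller (this is exactly where strict convexity is essential — without it, as the horseshoe example shows, minimizing curves between boundary points could leave $M^{\rm int}$ and take ``shortcuts'' that are invisible to the data). Concretely one must show: (i) for every $z \in \Gamma$ there \emph{exist} sources $p$ in the data arbitrarily close to $z$ (this uses density of sources, an assumption implicit in the setup), (ii) such sources are \emph{recognizable} from the shape of $r_p$, and (iii) the resulting limit $\lim r_p(z') = d(z,z')$ is the $M$-distance, which by Proposition~\ref{prop:extension} and geodesic convexity coincides with the length of a minimizing geodesic staying in $M^{\rm int}$ near $z,z'$, hence depends only on $g$ in a neighborhood of $\Gamma$ and ultimately, after taking the $t \to 0$ limit in the metric-recovery formula, only on $g|_\Gamma$. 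Once $g|_\Gamma$ is pinned down, $P(\Gamma)$ follows formally.
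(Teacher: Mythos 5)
Your core computation is the same as the paper's: both recover $\|v\|_{g}$ for $v\in T_z\Gamma$ as the first-order limit $\lim_{t\to 0}d(z,c(t))/|t|$ along a curve $c$ in $\Gamma$, and then pass from norms to the inner product (the paper invokes Lemma \ref{lem:inner_prod_from_sphere}; polarization, as you suggest, works equally well), after which $P(\Gamma)$ is indeed determined formally as the open unit disk bundle of $(\Gamma,g|_\Gamma)$. Where you diverge is in how you access the function $d(z,\cdot)|_{\Gamma}$ from the unindexed data: you reconstruct it as a limit of $r_{p_k}$ over interior sources $p_k\to z$, recognized by the condition that $\inf_\Gamma r_{p_k}\to 0$ with near-minimizers concentrating at $z$. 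This works, but it is a detour, and your hedge that it ``uses density of sources, an assumption implicit in the setup'' points to a misreading of the data \eqref{eqn:data}: the family $\{r_p\}$ is indexed by \emph{all} $p\in M$, so the boundary point $z\in\Gamma\subset M$ is itself a source, and $r_z$ is identified within the unindexed family as the unique function vanishing at $z$. The paper therefore uses $r_z(c(t))=d(z,c(t))=\|\exp_z^{-1}(c(t))\|_g$ directly, with strict convexity guaranteeing that $\exp_z^{-1}$ is defined and smooth near $z$, and no approximation or recognizability argument is needed. Your extra machinery costs nothing logically --- the uniform convergence $r_{p_k}\to r_z$ follows from the triangle inequality --- but the existence and identification of approximating sources is precisely the step you flag as the ``main obstacle,'' and it evaporates once one notes that $z$ is already a source.
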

\begin{proof}
Let $(z,v) \in T\Gamma$. We choose a smooth curve $c\colon (-1,1) \to \Gamma$ for which
$c(0)=z,$ and  $\dot{c}(0)=v$. 
Since the boundary $\p M$ of $M$ is strictly convex the inverse function of the exponential map $\exp_z$ is smooth and well defined near $z$ on $M$. In addition, we have that
\[
r_z(c(t))=d(z,c(t))=\|\exp^{-1}_z(c(t))\|_g.
\]
We set
$
\tilde c(t)=\exp^{-1}_z(c(t))\in T_{z} M.
$
As the differential of the exponential map at the origin is an identity operator we get
$
\tilde c(0)=0, \text{ and } \dot{\tilde c}(0)=v\in T_zM.
$
From here the continuity of the norm yields
\begin{equation}
\label{eq:g_on_boundary}
\lim_{t \to 0}\frac{r_{z}(c(t))}{|t|}
=\lim_{t \to 0}\left\|\frac{\tilde c(0)-\tilde c(t)}{t}\right\|_g
=\left\|\dot{\tilde c}(0)\right\|_g
=\|v\|_g.
\end{equation}
By the data \eqref{eqn:data} and the choice of the path $c(t)\in \Gamma$ we know the left hand side of equation \eqref{eq:g_on_boundary}. Therefore we have recovered the length of an arbitrary vector $(z,v)\in T\Gamma$. Moreover, the set $P(\Gamma)$ is recovered.

Since we know the unit sphere 
$
\{v \in T_{z}\p M: \: \|v\|_g=1\}
$
for each $z \in \Gamma$ the reconstruction of the first fundamental form of $\Gamma$ can be carried out as explained in the next lemma.
\end{proof}

\begin{lemma}
\label{lem:inner_prod_from_sphere}
Let $(X,g)$ be a finite dimensional inner product space. Let $a>0$ and 
$
S(a):= \{v\in X: \: \|v\|_g=a\}.
$
Then any open subset  $U$ of $S(a)$ determines the inner product $g$ on $X$.
\end{lemma}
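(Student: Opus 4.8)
The plan is to reconstruct the bilinear form $g$ from the open patch $U \subseteq S(a)$ by a polarization-type argument, recovering enough directions and enough information about the norm along them to pin down all inner products $\ip{u}{w}$ for $u,w \in X$. First I would fix any point $v_0 \in U$; since $U$ is open in the sphere $S(a)$, it contains a relatively open neighborhood of $v_0$, and the tangent space to $S(a)$ at $v_0$ is the $g$-orthogonal complement of $v_0$. More concretely, I would use the chart coming from central projection or the exponential map of the round metric induced by $g$: the rays $\R_{>0} \cdot u$ for $u$ ranging over $U$ sweep out an open cone $C \subseteq X$, and for each $u \in U$ the scalar $a$ tells us $\|u\|_g = a$, hence $\|t u\|_g = |t| a$ for every $t \in \R$. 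Thus along every direction hit by the cone $C$ we know the norm exactly (not merely up to scale). This already determines $g$ restricted to the $2$-dimensional subspaces spanned by pairs of directions in $C$, via the polarization identity $\ip{x}{y}_g = \tfrac14\left(\|x+y\|_g^2 - \|x-y\|_g^2\right)$, provided $x+y$ and $x-y$ also lie in directions we control.

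The key step is therefore to argue that the open cone $C$ spanned by $U$, together with closure under the vector-space operations needed for polarization, already determines $g$ on all of $X$. I would proceed as follows. Pick a basis $e_1,\dots,e_n$ of $X$ consisting of unit vectors whose directions lie in $C$; this is possible because $C$ is a nonempty open cone, so it contains $n$ linearly independent directions (an open set in $S(a)$ cannot be contained in any proper subspace). For the diagonal entries $g_{ii} = \|e_i\|_g^2 = a^2$ we are already done. For the off-diagonal entries $g_{ij} = \ip{e_i}{e_j}_g$ with $i \neq j$, consider the vector $e_i + e_j$. Its direction may not lie in $C$, but I can instead work entirely inside the $2$-plane $\Pi_{ij} = \mathrm{span}(e_i,e_j)$: the set $U \cap \Pi_{ij}$ — more precisely the set of directions in $C \cap \Pi_{ij}$ — is a nonempty open arc of the $g$-unit circle of $\Pi_{ij}$, and an open arc of a circle in a $2$-dimensional inner product space determines that inner product (this is the genuinely $2$-dimensional base case). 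To see the base case: an open arc determines, for a continuum of angles $\theta$, a curve $\theta \mapsto v(\theta)$ with $\|v(\theta)\|_g = a$; differentiating the relation $\ip{v(\theta)}{v(\theta)}_g = a^2$ gives $\ip{v'(\theta)}{v(\theta)}_g = 0$, and from the pair $v(\theta), v'(\theta)$ one reconstructs the Gram matrix in any fixed linear coordinate system on $\Pi_{ij}$ by solving a (nondegenerate) linear system, since $v$ and $v'$ are linearly independent and their coordinate vectors are known. Hence $g|_{\Pi_{ij}}$ is determined, in particular $g_{ij}$, and ranging over all $i<j$ recovers the full matrix of $g$.

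The main obstacle I anticipate is purely the bookkeeping of the $2$-dimensional base case and making precise the claim that ``knowing the open arc as a subset of $X$'' — i.e., knowing it only as a point set, not as a parametrized curve — still determines the metric; one must observe that the point set $C \cap \Pi_{ij} \cap S(a)$ already carries a canonical smooth structure (it is a $1$-manifold) and that its image under any fixed linear coordinate chart on $\Pi_{ij}$ is a known planar curve, so differentiation along it is intrinsic and yields the tangent lines, which is all that is needed. A secondary subtlety is ensuring that the cone $C$ really meets every coordinate $2$-plane $\Pi_{ij}$ in a nonempty open set; this follows by choosing the basis $\{e_i\}$ with directions in $C$ and then noting that directions near $e_i$ and near $e_j$ are in $C$, so the great circle through them meets $C$ in an open arc. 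Everything else — the scaling $\|tu\|_g = |t|\|u\|_g$, the polarization identity, and the linear-algebra solve — is routine. Once $g$ is recovered, the lemma is proved, and together with Lemma \ref{lem:ball_at_the_boundary} it yields the reconstruction of $g|_\Gamma$ claimed there.
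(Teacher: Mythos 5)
Your overall strategy is sound and close in spirit to the standard argument (the paper omits the proof, deferring to \cite[Lemma 3.33]{Katchalov2001}): from the point set $U$ you know the quadratic form $Q(w)=\|w\|_g^2$ on the open cone $C=\{tu : t>0,\; u\in U\}$, since each $w\in C$ lies on a unique ray meeting $U$, and all of $X$ is then within reach. Two remarks. First, there is a flaw in your two--dimensional base case as written: at a single $\theta$ the relations $\langle v(\theta),v(\theta)\rangle_g=a^2$ and $\langle v'(\theta),v(\theta)\rangle_g=0$ give only two linear equations for the three unknown entries of the Gram matrix on $\Pi_{ij}$, so the system you propose to solve is underdetermined rather than nondegenerate (e.g.\ in coordinates where $v=(1,0)$ and $v'=(0,1)$ they fix $g_{11}$ and $g_{12}$ but leave $g_{22}$ free). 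The repair is easy and already implicit in your ``continuum of angles'': use the relations at two distinct values of $\theta$, or differentiate once more. Uniqueness then follows because the difference of two candidate forms is a quadratic form on $\Pi_{ij}$ vanishing on the whole arc, and the zero set of a nonzero quadratic form on a plane is contained in a union of at most two lines through the origin, which cannot contain an open arc of an ellipse centered at the origin. Second, the entire reduction to $2$--planes is unnecessary: $Q$ is a homogeneous quadratic polynomial whose values are known on the nonempty open set $C\subset X$, and two polynomials agreeing on a nonempty open set coincide; hence $Q$ is determined on all of $X$, and polarization recovers $g$. This shorter route replaces both of your paragraphs and avoids the bookkeeping about which $2$--planes the cone meets.
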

\begin{proof}
This proof is the same as the one in \cite[Lemma 3.33]{Katchalov2001} and thus omitted here.  
\end{proof}
 
Let $p_0 \in M$. By Theorem \ref{prop:dist_smoothness} we can find a boundary point $z_0 \in \Gamma$ and neighborhoods $\upo$ and $\vzo$ for $p_0$ and $z_0$ respectively such that the distance function $d(\cdot,\cdot)$ is smooth in the product set $\upo \times \vzo$. 
For each $z\in \Gamma\cap \vzo$ we let $\gamma_z$ be the unique distance minimizing unit speed geodesic from $p_0$ to $z$. If we decompose the velocity of the geodesic $\gamma_z$ at $r_{p_0}(z)$ into its tangential and normal components to the boundary, then the tangential component coincides with the boundary gradient of the travel time function $r_{p_0}$ at $z$. For this vector field we use the notation
$
\grad_{\dM}r_{p_0}(z)\in  P_z(\Gamma).
$
Furthermore, by Lemma \ref{lem:ball_at_the_boundary} we have recovered the metric tensor of the measurement domain $\Gamma\subset \p M$. Thus we can compute $\grad_{\dM}r_{p_0}(z)$ whenever the respective travel time function $r_{p_0}$ is differentiable on $\Gamma$.

 \subsection{Topological reconstruction}
\label{sec:topology}

We first show that the data \eqref{eqn:data} separates the points in the manifold $M$.

\begin{lemma} 
Let $(M,g)$ be as in Theorem \ref{prop:dist_smoothness}. Let $\Gamma \subset \p M$ be open and $p_1,p_2 \in M$ be such that $r_{p_1}(z) = r_{p_2}(z)$ for all $z \in \Gamma$, then $p_1 = p_2$. 
\label{lem:unique}
\end{lemma}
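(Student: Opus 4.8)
The plan is to produce a single boundary point $z_0\in\Gamma$ at which both travel time functions $r_{p_1}$ and $r_{p_2}$ are smooth, to read off from the coincidence of these functions (together with the metric on $\Gamma$ recovered in Lemma~\ref{lem:ball_at_the_boundary}) the full exit velocities of the distance minimizing geodesics reaching $z_0$, and then to integrate the geodesic equation backwards to conclude $p_1=p_2$.

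First I would choose $z_0$. For $i=1,2$, Theorem~\ref{thm:cut_locus}\eqref{C4} says that $\p M\setminus\locus(p_i)$ is open and dense in $\p M$; deleting the single point $p_i$ (if it happens to lie on $\p M$) keeps the set open and dense, so intersecting the two resulting open dense sets with the non-empty open set $\Gamma$ produces a non-empty open set, from which I pick $z_0$. By Lemma~\ref{lem:outside_cut_locus} the function $d(p_i,\cdot)$ is then smooth in a neighbourhood of $z_0$ in $M$, hence $r_{p_i}=d(p_i,\cdot)\big|_\Gamma$ is smooth near $z_0$ on $\p M$; moreover, since $d(p_i,\cdot)$ is smooth at $z_0$, there is a unique unit-speed distance minimizing geodesic $\gamma_i\colon[0,L_i]\to M$ from $p_i$ to $z_0$ with $L_i=d(p_i,z_0)=r_{p_i}(z_0)$, and by the first variation of arc length (as recalled in Section~\ref{sec:metric_on_Gamma}) the tangential component of its exit velocity $\dot\gamma_i(L_i)$ at $z_0$ equals $\grad_{\dM}r_{p_i}(z_0)$.

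Now the hypothesis $r_{p_1}\equiv r_{p_2}$ on the open set $\Gamma$ gives $L_1=r_{p_1}(z_0)=r_{p_2}(z_0)=L_2=:L$ and $\grad_{\dM}r_{p_1}(z_0)=\grad_{\dM}r_{p_2}(z_0)$, so $\dot\gamma_1(L)$ and $\dot\gamma_2(L)$ are unit vectors of $T_{z_0}M$ sharing the same tangential part, whence their normal parts have the same length $\bigl(1-\|\grad_{\dM}r_{p_1}(z_0)\|_g^2\bigr)^{1/2}$. Since each $\gamma_i$ lies in $M^{\mathrm{int}}$ just before reaching $z_0$ and $\p M$ is strictly convex, the Weingarten computation used in the proof of Proposition~\ref{prop:extension} (see~\eqref{eq:der_test}) forces $\langle\dot\gamma_i(L),\nu(z_0)\rangle_g>0$ for $i=1,2$; hence the two normal parts coincide (the same positive multiple of $\nu(z_0)$), and therefore $\dot\gamma_1(L)=\dot\gamma_2(L)$.

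Finally, $\gamma_1$ and $\gamma_2$ are geodesics with $\gamma_1(L)=z_0=\gamma_2(L)$ and $\dot\gamma_1(L)=\dot\gamma_2(L)$, so uniqueness of solutions of the geodesic equation yields $\gamma_1\equiv\gamma_2$ on $[0,L]$, and in particular $p_1=\gamma_1(0)=\gamma_2(0)=p_2$. The step I expect to need the most care is passing from the common value and common boundary gradient of the two travel time functions to the full exit velocity: this is exactly where strict convexity of $\p M$ enters, both to fix the sign of the normal component of $\dot\gamma_i(L)$ and, via geodesic convexity, to guarantee that the minimizing geodesics stay in $M^{\mathrm{int}}$ up to $z_0$ so that this sign argument applies.
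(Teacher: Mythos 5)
Your proof is correct and follows essentially the same route as the paper: pick a point $z_0\in\Gamma$ outside both cut loci (the paper does this via Theorem~\ref{prop:dist_smoothness}, you via Theorem~\ref{thm:cut_locus}\eqref{C4} and Lemma~\ref{lem:outside_cut_locus}, which is the same mechanism), identify the tangential part of the exit velocity with $\grad_{\dM}r_{p_i}(z_0)$, conclude the full exit velocities agree, and run the geodesic backwards. If anything, you are slightly more careful than the paper at the one delicate point---the paper passes from equal tangential components to equal unit velocities without comment, whereas you justify the sign of the normal component via strict convexity.
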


\begin{proof}
First we choose open and dense subsets $W_1,W_2 \subset \dM$ for the points $p_1$ and $p_2$ as we have for the point $p_0$ in Theorem \ref{prop:dist_smoothness}. Then we choose any point $z_0 \in W_{p_1}\cap W_{p_2} \cap \Gamma$, neighborhoods $U_{p_1}$ of $p_1$, $U_{p_2}$ of $p_2$ and $V_{p_1}$, $V_{p_2}$ of $z_0$ as we have for $p_0$ in Theorem \ref{prop:dist_smoothness}. Thus the distance function $d(\cdot,\cdot)$ is smooth in the product sets $U_{p_1}\times V$ and $U_{p_2}\times V$, where $V=V_{p_1}\cap V_{p_2}$ is an open neighborhood of $z_0$. Moreover for each $(p,z)\in U_{p_i}\times V, \: i \in \{1,2\}$ there exists a unique distance minimizing geodesic of $M$ connecting $p$ to $z$. 

If $\gamma_i$ is the distance minimizing geodesic from $p_i$ to $z_0$ for $i = \{1,2\}$ then by the discussion preceding this lemma we have that $\grad_{\dM}r_{p_i}(z_0)$ represents the tangential component of $\dot \gamma_i$ at $r_{p_i}(z_0)$. Since $r_{p_1}=r_{p_2}$  the tangential components of $\dot \gamma_1$ and $\dot \gamma_2$ are the same. Since the velocity vectors of $\dot \gamma_i$ at $r_{p_i}(z_0)$ have unit length, they must also coincide.
We get
\[
z_0=\gamma_1(r_{p_1}(z_0))=\gamma_2(r_{p_2}(z_0)) \quad \text{ and } \quad \dot \gamma_1(r_{p_1}(z_0))=\dot \gamma_2(r_{p_2}(z_0)).
\]
Thus the geodesics $\gamma_1$ and $\gamma_2$ agree and we have $p_1=p_2$.
\end{proof}

We are now ready to reconstruct the topological structure of $(M,g)$ from the partial travel time data \eqref{eqn:data}. Let $B(\Gamma)$ be the collection of all bounded functions $f\colon \Gamma \to \R$ and $\|\cdot\|_\infty$ the supremum norm of $B(\Gamma)$. Thus $(B(\Gamma),\|\cdot\|_\infty)$ is a Banach space. Since $(M,g)$ is a compact Riemannian manifold each travel time function $r_p$, for $p \in M$, is bounded by the diameter of $M$, which is finite. Thus
\[
\{r_p=d(p,\cdot)\colon \Gamma \to [0,\infty)|~ p \in M\}\subset B(\Gamma),
\]
and the map
\begin{equation}
\label{eq:map_R}
R:(M,g) \to  (B(\Gamma), \| \cdot \|_{\infty}), \quad R(p)=r_p
\end{equation}
is well defined.

\begin{proposition}
\label{lemma:homeo}
Let Riemannian manifold $(M,g)$ be as in Theorem \ref{prop:dist_smoothness}. The map $R$ as in \eqref{eq:map_R} is a topological embedding. 
\end{proposition}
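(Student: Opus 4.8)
The plan is to verify the three defining properties of a topological embedding: continuity, injectivity, and continuity of the inverse on the image. The last of these comes for free once the first two are established, because $M$ is compact and $(B(\Gamma),\|\cdot\|_\infty)$ is a metric space, hence Hausdorff, so any continuous injection from $M$ into $B(\Gamma)$ is automatically a homeomorphism onto its image. So the real content reduces to continuity and injectivity of $R$.

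First I would check continuity, which is essentially immediate from the triangle inequality. For any $p,p'\in M$ and any $z\in\Gamma$ one has $|d(p,z)-d(p',z)|\le d(p,p')$, hence $\|R(p)-R(p')\|_\infty=\sup_{z\in\Gamma}|r_p(z)-r_{p'}(z)|\le d(p,p')$. Thus $R$ is $1$-Lipschitz from $(M,g)$ into $(B(\Gamma),\|\cdot\|_\infty)$, in particular continuous. (One also notes that $R$ genuinely takes values in $B(\Gamma)$: since $M$ is compact its diameter is finite, so each $r_p$ is bounded, as already observed before the statement.) Next, injectivity is exactly the content of Lemma~\ref{lem:unique}: if $R(p_1)=R(p_2)$, then $r_{p_1}(z)=r_{p_2}(z)$ for every $z\in\Gamma$, and Lemma~\ref{lem:unique} forces $p_1=p_2$.

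Combining these, $R\colon M\to R(M)$ is a continuous bijection from a compact space onto a subspace of a Hausdorff space, hence a homeomorphism onto its image, so $R$ is a topological embedding. I do not anticipate any genuine obstacle in this argument; all the difficulty is concentrated in the already-proved Lemma~\ref{lem:unique}, which itself rests on the regularity statement Theorem~\ref{prop:dist_smoothness} and on strict convexity of $\p M$ (through the identification of $\grad_{\dM}r_{p_i}$ with the tangential component of the velocity of the unique minimizing geodesic reaching the boundary near a good point $z_0\in\Gamma$). The only mild care needed is to make sure that the point $z_0$ used in applying Lemma~\ref{lem:unique} can be chosen inside $\Gamma$, which is guaranteed by the density assertion in Theorem~\ref{prop:dist_smoothness}.
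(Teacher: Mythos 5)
Your proposal is correct and follows essentially the same route as the paper: injectivity from Lemma \ref{lem:unique}, continuity ($1$-Lipschitz) from the triangle inequality, and the compactness-plus-Hausdorff argument to upgrade a continuous injection to an embedding (the paper phrases this last step as $R$ being a closed map, which is the same standard fact).
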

\begin{proof}
By Lemma \ref{lem:unique}, we know that the map $R$ is injective, and by the triangle inequality we get that it is also continuous.
%
%
Let $K$ be a closed set in $M$. Since $M$ is a compact Hausdorff space the set $K$ is compact. Since the image of a compact set under a continuous mapping is compact, it follows that $R(K)$ is closed. This makes $R$ a closed map and thus a topological embedding. 
\end{proof}


\subsection{Boundary Determination}
We recall that the data \eqref{eqn:data} only gives us the subset $\Gamma$ of the boundary, and we do not know yet if the travel time function $r\in R(M)$ is related to an interior or a boundary point of $M$. In this subsection we will use the data  \eqref{eqn:data} to determine the boundary of the unknown manifold $M$ as a point set. However, due to Proposition \ref{lemma:homeo} we may assume without loss of generality that the topology of $M$ is known. Also the set $P(\Gamma)$, as in Lemma \ref{lem:ball_at_the_boundary}, is known to us. 

Let $(z,v) \in P(\Gamma)$, and define the set,
\begin{equation}
\label{eqn:sigma}
\begin{split}
\sigma(z,v) =& \{ p \in M ~|~\text{the point $p$ has a neighborhood $U\subset M$ such that,} 
\\
& \: r_q \colon \Gamma \to \R \text{ is smooth near $z$ for every $q \in U$},
\\
& q \mapsto \grad_{\dM}r_q(z) 
\text{ is continuous in $U$},
\\
& \grad_{\dM} r_p(z) = -v\} \cup \{ z \},
\end{split}
\end{equation}
where $\grad_{\dM}r_p(z)$ is the boundary gradient of $r_p$ at $z \in \Gamma$. 
We recall that by Proposition \ref{lemma:homeo} we know the topology of $M$, and by Lemma \ref{lem:ball_at_the_boundary} we know the geometry of $\Gamma$. These in conjunction with the data \eqref{eqn:data} imply that we can recover the set $\sigma(z,v)$ for every $(z,v) \in P(\Gamma)$. 
In the next lemma we generalize the result \cite[Lemma 2.9]{lassas2015determination} and relate $\sigma(z,v)$ to the maximal distance minimizing segment of the geodesic $\gamma_{z,v}$. 

\begin{lemma}
Let $(z,v) \in P(\Gamma)$ then $\overline{\sigma(z,v)} = \gamma_{z,v}([0,\cut(z,v)])$.
\label{lemma:sigmaClosure}
\end{lemma}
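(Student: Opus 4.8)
The plan is to prove the two inclusions $\overline{\sigma(z,v)} \subset \gamma_{z,v}([0,\cut(z,v)])$ and $\gamma_{z,v}([0,\cut(z,v)]) \subset \overline{\sigma(z,v)}$ separately, using the regularity theory of Section~\ref{sec:dist_funct} as the main engine.

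\textbf{The inclusion $\sigma(z,v) \subset \gamma_{z,v}([0,\cut(z,v)])$.} Let $p \in \sigma(z,v)$ with $p \neq z$, so by definition $r_q$ is smooth near $z$ for all $q$ in some neighborhood $U$ of $p$, the map $q \mapsto \grad_{\dM} r_q(z)$ is continuous on $U$, and $\grad_{\dM}r_p(z) = -v$. Since $r_p = d(p,\cdot)$ is smooth near $z \in \Gamma$, Lemma~\ref{lem:outside_cut_locus} tells us that $z \notin \{p\} \cup \locus(p)$; hence there is a unique distance-minimizing geodesic $\gamma$ from $p$ to $z$ which is not cut at $z$, and its exit velocity at $z$ has tangential component $\grad_{\dM}r_p(z) = -v$. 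Running this geodesic backwards from $z$ with initial (inward) unit velocity $v$ (the unique unit inward vector whose tangential part is $v$, by strict convexity and the discussion around \eqref{eqn:h}) we recover that $p = \gamma_{z,v}(t_p)$ for some $t_p = d(p,z) = r_p(z)$, and minimality together with $z \notin \locus(p)$ forces $t_p < \cut(z,v)$ — indeed if $t_p > \cut(z,v)$ the segment $\gamma_{z,v}|_{[0,t_p]}$ would not be minimizing, and $t_p = \cut(z,v)$ would put $z$ in the cut locus of $p$ (using the symmetry $\cut(z,v)$ defined via $z$). So $\sigma(z,v) \subset \gamma_{z,v}([0,\cut(z,v)))$, and taking closures gives one inclusion, since the right side is closed.

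\textbf{The reverse inclusion.} It suffices to show $\gamma_{z,v}([0,\cut(z,v))) \subset \sigma(z,v)$ and then take closures. Fix $t_0 \in (0,\cut(z,v))$ and set $p = \gamma_{z,v}(t_0)$. By Lemma~\ref{lem:cut_time} the segment $\gamma_{z,v}|_{[0,t_0]}$ has no conjugate points and is the unique unit-speed minimizer between its endpoints; in particular $z \notin \{p\} \cup \locus(p)$, so by Proposition~\ref{cor:smoothness_of_d} there are neighborhoods $U$ of $p$ and $V$ of $z$ with $d(\cdot,\cdot)$ smooth on $U \times V$. Shrinking $U$, for every $q \in U$ the function $r_q = d(q,\cdot)$ is smooth near $z$, and $q \mapsto \grad_{\dM} r_q(z)$ is continuous on $U$ because it is the tangential component of $\nabla_z d(q,z)$, which depends smoothly on $(q,z)$. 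Finally $\grad_{\dM}r_p(z)$ is the tangential component of the exit velocity at $z$ of the unique minimizer from $p$ to $z$, which is exactly $-v$ by construction of $\gamma_{z,v}$ (the geodesic leaving $z$ with inward velocity $v$ arrives — reversed — at $z$ with velocity having tangential part $-v$). Hence $p \in \sigma(z,v)$. This gives $\gamma_{z,v}([0,\cut(z,v))) \subset \sigma(z,v)$; adding the endpoint $z$ (which lies in $\sigma(z,v)$ by the explicit $\cup\{z\}$ in \eqref{eqn:sigma}), and noting $\gamma_{z,v}(\cut(z,v)) \in \overline{\sigma(z,v)}$ by continuity of $\gamma_{z,v}$, we obtain $\gamma_{z,v}([0,\cut(z,v)]) \subset \overline{\sigma(z,v)}$.

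\textbf{Main obstacle.} The routine parts are the smoothness and continuity bookkeeping; the delicate point is the endpoint behavior and the direction-matching. One must be careful that the tangential component determines the full inward unit velocity (this uses strict convexity so that $\nu(z)$ is genuinely transverse and the decomposition in \eqref{eqn:h} is a diffeomorphism), and that the condition $\grad_{\dM}r_q(z) = -v$ really pins down $q$ to lie on $\gamma_{z,v}$ rather than on some other geodesic issuing from $z$ with a different inward vector but the same tangential part — it cannot, because there is only one such inward unit vector. The other subtlety is making sure the continuity-of-gradient clause in the definition of $\sigma(z,v)$ is actually met at interior cut-adjacent points; this is handled precisely by Proposition~\ref{cor:smoothness_of_d}, which gives joint smoothness of $d$ on a product neighborhood and is the reason that proposition was proved. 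I expect the write-up to closely parallel \cite[Lemma 2.9]{lassas2015determination}, with the product-neighborhood smoothness replacing the global smoothness of $\hat r_p$ near its minimizers that was available in the full-data setting.
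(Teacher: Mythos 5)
Your proof is correct and takes essentially the same route as the paper's: Proposition~\ref{cor:smoothness_of_d} yields $\gamma_{z,v}([0,\cut(z,v)))\subset\sigma(z,v)$ for the reverse inclusion, and for the forward inclusion the smoothness of $r_p$ near $z$ together with $\grad_{\dM}r_p(z)=-v$ identifies $\gamma_{z,v}$ as the unique minimizer from $z$ to $p$, so $p$ lies on the closed minimizing segment. The only superfluous step is your claim that $t_p=\cut(z,v)$ would force $z\in\locus(p)$ (this symmetry is not immediate from the paper's definition \eqref{eq:cut_locus_real_def}), but it is also unnecessary, since $t_p\le\cut(z,v)$ already places $p$ in the closed set $\gamma_{z,v}([0,\cut(z,v)])$.
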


\begin{proof}
Let $t \in [0, \cut(z,v))$ and define $y: = \gamma_{z,v}(t)$.
Thus $y$ is not in the cut locus of $z$ (see equation \eqref{eq:cut_locus_real_def}). By Proposition \ref{cor:smoothness_of_d} there exist  neighborhoods $U,V\subset M$ of $y$ and $z$ respectively, having the property that the distance function $d(\cdot, \cdot)$ is smooth in the product set $U \times V$. Therefore the function $r_q(\cdot)=d(q,\cdot)|_{\Gamma}$ is smooth near $z$ for any $q \in U$. Furthermore, $\grad_{\dM}r_p(z) = -v$, and the function $p \mapsto \grad_{\dM} r_p(z)$ is continuous in $U$. 
Therefore $y$ is in $\sigma(z,v)$ and the inclusion $\gamma_{z,v}([0,\cut(z,v))) \subseteq \sigma(z,v)$ is true. This gives $\gamma_{z,v}([0,\cut(z,v)]) \subseteq \overline{\sigma(z,v)}$.

Let $p \in \sigma(z,v)$, then $r_p(z)$ is smooth in a neighborhood of $z$ and $\grad_{\dM}r_p(z) = -v$. Thus $\gamma_{z, v}$ is the unique distance minimizing geodesic connecting $z$ to $p$. Since the geodesic $\gamma_{z,v}$ is not distance minimizing beyond the interval $[0,\cut(z,v)]$ we have $p \in \gamma_{z,v}([0,\cut(z,v)])$ and therefore
$
\overline{\sigma(z,v)}\subset \gamma_{z,v}([0,\cut(z,v)]).
$
\end{proof}

We set
\begin{equation}
    T_{z,v}:= \sup_{p \in \sigma(z,v)} r_p(z) = \sup_{p \in \sigma(z,v)} d(p,z), \qquad \text{ for }(z,v) \in P(\Gamma).
    \label{eqn:Tzv}
\end{equation}
Notice that this number is determined entirely by the data \eqref{eqn:data}, as opposed to $\cut(z,v)$ which requires our knowledge of when the geodesics were distance minimizing. By the following corollary, whose proof is evident, these two numbers are the same.

\begin{corollary}
For any $(z,v) \in P(\Gamma)$ we have that $ T_{z,v}= \cut(z,v) $.
\label{cor:cut=Tzv}
\end{corollary}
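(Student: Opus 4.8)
The plan is to show the two inequalities $T_{z,v}\le\cut(z,v)$ and $T_{z,v}\ge\cut(z,v)$ separately, using Lemma \ref{lemma:sigmaClosure} as the bridge between the data-driven quantity $T_{z,v}$ and the geometric quantity $\cut(z,v)$.

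First I would prove $T_{z,v}\le\cut(z,v)$. By Lemma \ref{lemma:sigmaClosure} every $p\in\sigma(z,v)$ lies in $\gamma_{z,v}([0,\cut(z,v)])$, so $p=\gamma_{z,v}(t)$ for some $t\in[0,\cut(z,v)]$; since $\gamma_{z,v}\colon[0,t]\to M$ is distance minimizing for such $t$, we have $r_p(z)=d(p,z)=t\le\cut(z,v)$. Taking the supremum over $p\in\sigma(z,v)$ gives $T_{z,v}\le\cut(z,v)$.

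For the reverse inequality, let $t\in[0,\cut(z,v))$ and set $p=\gamma_{z,v}(t)$. The proof of Lemma \ref{lemma:sigmaClosure} shows $p\in\sigma(z,v)$, and since $\gamma_{z,v}|_{[0,t]}$ is a unit-speed minimizer we get $r_p(z)=d(p,z)=t$. Hence $T_{z,v}=\sup_{p\in\sigma(z,v)}r_p(z)\ge t$ for every $t<\cut(z,v)$, so $T_{z,v}\ge\cut(z,v)$. Combining the two bounds yields $T_{z,v}=\cut(z,v)$.

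The only point that needs a moment's care — and the reason the corollary is not completely immediate — is making sure the supremum in \eqref{eqn:Tzv} is actually attained or at least approached within $\sigma(z,v)$ rather than only within its closure; this is handled because the interior segment $\gamma_{z,v}([0,\cut(z,v)))$ already sits inside $\sigma(z,v)$ by Lemma \ref{lemma:sigmaClosure}, so the supremum of $r_p(z)$ over this subset alone is $\cut(z,v)$, and no appeal to the endpoint $\gamma_{z,v}(\cut(z,v))$ (which may or may not lie in $\sigma(z,v)$) is required. Thus there is in fact no real obstacle here; the content of the statement is entirely contained in Lemma \ref{lemma:sigmaClosure}, and the corollary is the bookkeeping step that repackages $\overline{\sigma(z,v)}=\gamma_{z,v}([0,\cut(z,v)])$ into an equality of numbers computable from the data.
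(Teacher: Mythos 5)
Your proof is correct and is exactly the argument the paper has in mind: the paper simply declares the proof ``evident'' as a consequence of Lemma \ref{lemma:sigmaClosure}, and your two inequalities (both read off from the inclusion $\gamma_{z,v}([0,\cut(z,v)))\subseteq\sigma(z,v)\subseteq\overline{\sigma(z,v)}=\gamma_{z,v}([0,\cut(z,v)])$ together with $d(z,\gamma_{z,v}(t))=t$ for $t\le\cut(z,v)$) are precisely the bookkeeping being elided. Your closing remark about not needing the endpoint $\gamma_{z,v}(\cut(z,v))$ to lie in $\sigma(z,v)$ is also the right observation.
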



We will use the sets $\sigma(z,v)$, for $(z,v) \in P(\Gamma)$ to determine the boundary $\dM$ of $M$. Since the topology of $M$ is known by Proposition \ref{lemma:homeo}, we can determine the topology of these $\sigma$ sets from the data. The next lemma shows if $\sigma(z,v)$ is closed then $\gamma_{z,v}(T_{z,v})$ is on the boundary of $M$.

\begin{lemma}
Let $(z,v) \in P(\Gamma)$. If $\sigma(z,v)$ is closed then $T_{z,v} = \exit(z,v)$.
\label{thm: Tzv=exit}
\end{lemma}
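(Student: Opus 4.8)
\textbf{Proof plan for Lemma \ref{thm: Tzv=exit}.}
The goal is to show that if $\sigma(z,v)$ is closed, then the maximal distance minimizing segment of $\gamma_{z,v}$ reaches all the way to the boundary, i.e.\ $\cut(z,v)=\exit(z,v)$ (using Corollary \ref{cor:cut=Tzv} to replace $T_{z,v}$ by $\cut(z,v)$). The plan is to argue by contradiction: assume $\cut(z,v)<\exit(z,v)$ and derive that $\sigma(z,v)$ fails to be closed. By Lemma \ref{lemma:sigmaClosure} we have $\overline{\sigma(z,v)}=\gamma_{z,v}([0,\cut(z,v)])$, so if $\sigma(z,v)$ were closed it would equal $\gamma_{z,v}([0,\cut(z,v)])$; in particular the endpoint $q:=\gamma_{z,v}(\cut(z,v))$ would have to lie in $\sigma(z,v)$. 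So it suffices to show that under the assumption $\cut(z,v)<\exit(z,v)$, the point $q$ is \emph{not} in $\sigma(z,v)$.

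First I would observe that $q$ is an interior point of $M$ (since $\cut(z,v)<\exit(z,v)$), and that $q\in\locus(z)$: indeed $\cut(z,v)<\exit(z,v)\le\hat\exit(z,v)$ forces $\cut(z,v)=\hat\cut(z,v)$, so $q$ is a genuine cut point in the sense of \eqref{eq:cut_locus_real_def}. By Lemma \ref{lem:cut_time} (equivalently Proposition \ref{Prop:cut_locus}) either there is a second distance minimizing geodesic from $z$ to $q$, or $q$ is the first conjugate point to $z$ along $\gamma_{z,v}$. In either case I want to conclude that $q\notin\sigma(z,v)$, which by definition \eqref{eqn:sigma} means that for every neighborhood $U$ of $q$, either $r_w$ fails to be smooth near $z$ for some $w\in U$, or $w\mapsto\grad_{\dM}r_w(z)$ fails to be continuous on $U$, or $\grad_{\dM}r_q(z)\ne -v$. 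In the two-geodesics case: if $\gamma$ is the second distance minimizing geodesic from $z$ to $q$, its reversal leaves $q$ in a direction $v'\ne v$ (were $v'=v$ the two geodesics would coincide), so for points $w$ along this second geodesic near $q$ one has $\grad_{\dM}r_w(z)$ close to $-v'\ne -v$; since such $w$ accumulate at $q$, no neighborhood $U$ of $q$ can have $\grad_{\dM}r_\cdot(z)$ continuous with value $-v$ at $q$ — more carefully, $d(z,\cdot)$ is not differentiable at $q$ (by Lemma \ref{lem:outside_cut_locus}, $q\in\locus(z)$), hence $r_q$ is not smooth near... — wait, that is $d(z,\cdot)$ as a function of the second argument, which is exactly what $r_q(z)=d(q,z)$ sees through the first argument; so $q$ itself witnesses the failure of the smoothness requirement in \eqref{eqn:sigma} once we note the roles of the arguments. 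In the conjugate case: $q$ is the first conjugate point, so $\D\exp_z$ is singular at $\cut(z,v)v$, and I would invoke that $d(z,\cdot)$ is not smooth at any cut point (Lemma \ref{lem:outside_cut_locus} again), giving the same conclusion.

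The cleanest uniform argument, which I would actually write, is this: $q\in\locus(z)$, so by Lemma \ref{lem:outside_cut_locus} the function $d(z,\cdot)$ is \emph{not} smooth at $q$; equivalently $d(\cdot,z)$ is not smooth at $q$. But membership of $q$ in $\sigma(z,v)$ requires (taking $U$ a neighborhood of $q$, $p=q$, and $z$ fixed) that $r_q=d(q,\cdot)|_\Gamma$ be smooth near $z$ \emph{and} that $w\mapsto\grad_{\dM}r_w(z)$ be continuous near $q$; the latter together with $\grad_{\dM}r_w(z)$ pointing along the unique minimizer direction forces, as in the proof of Lemma \ref{lem:unique}, a neighborhood of $q$ on which $d(\cdot,z)$ is smooth, contradicting $q\in\locus(z)$. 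Hence $q\notin\sigma(z,v)$, so $\sigma(z,v)\subsetneq\overline{\sigma(z,v)}$ and $\sigma(z,v)$ is not closed — the desired contradiction. The main obstacle is the bookkeeping of which argument of $d$ is held fixed and which varies in the definition \eqref{eqn:sigma}, and carefully extracting, from the continuity of $w\mapsto\grad_{\dM}r_w(z)$ on a neighborhood of $q$ plus Proposition \ref{cor:smoothness_of_d}, the smoothness of $d$ on a full two-sided neighborhood of $q$ (so that $q$ could not be a cut point); I expect this to mirror closely the argument already used in Lemma \ref{lem:unique} and in \cite[Lemma 2.9]{lassas2015determination}, and to go through without new ideas.
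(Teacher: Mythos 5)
Your overall skeleton is the paper's: assume $\cut(z,v)<\exit(z,v)$, use closedness and Lemma \ref{lemma:sigmaClosure} to place $q=\gamma_{z,v}(\cut(z,v))$ in $\sigma(z,v)$, invoke the dichotomy of Lemma \ref{lem:cut_time}, and derive a contradiction with the defining conditions of $\sigma(z,v)$. Your sketch of the two-minimizers case (two families of points accumulating at $q$ along the two geodesics, carrying two different values of $\grad_{\dM}r_\cdot(z)$, which kills the continuity of $w\mapsto\grad_{\dM}r_w(z)$ required in \eqref{eqn:sigma}) is exactly the paper's argument. But the ``cleanest uniform argument'' that you say you would actually write has a genuine gap, and the conjugate case is not handled.

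The gap: you claim that membership of $q$ in $\sigma(z,v)$ ``forces a neighborhood of $q$ on which $d(\cdot,z)$ is smooth, contradicting $q\in\locus(z)$.'' The conditions in \eqref{eqn:sigma} are (i) smoothness of $r_w=d(w,\cdot)|_\Gamma$ near $z$ \emph{in the boundary variable}, for each fixed $w\in U$, and (ii) mere \emph{continuity} of $w\mapsto\grad_{\dM}r_w(z)$ in the interior variable $w$. Neither gives smoothness of $w\mapsto d(w,z)$ on $U$, so Lemma \ref{lem:outside_cut_locus} cannot be applied to conclude $U\cap\locus(z)=\emptyset$; the implication you need is precisely what has to be proved, and neither Lemma \ref{lem:unique} nor Proposition \ref{cor:smoothness_of_d} supplies it (the latter goes in the opposite direction: off the cut locus implies smooth). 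The same confusion appears in your remark that non-differentiability of $d(z,\cdot)$ at $q$ is ``exactly what $r_q(z)$ sees'': that non-smoothness is in the interior variable at $q$, whereas condition (i) concerns the boundary variable near $z$; these are different statements and the first does not imply the failure of the second. Finally, in the conjugate case $q$ may have a \emph{unique} minimizer to $z$, so there is no second direction to play off against $-v$; the paper closes this case with Klingenberg's theorem \cite[Theorems 2.1.12, 2.1.14]{klingenberg}: $\exp_z$ fails to be locally injective at $\cut(z,v)v$, which produces a sequence $p_i\to q$ of points admitting at least two minimizing geodesics from $z$, reducing the conjugate case to the two-minimizers argument. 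Your proposal omits this step, and without it the contradiction in the conjugate case does not go through.
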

\begin{proof}
By the definition of $T_{z,v}$ we must have $T_{z,v} \leq \exit(z,v)$. 

Suppose that $T_{z,v} < \exit(z,v)$. From Corollary \ref{cor:cut=Tzv} then we also know 
\begin{equation}
    \label{eq:T_less_than_exit}
\cut(z,v) = T_{z,v} < \exit(z,v)
\end{equation}
Let $p = \gamma_{z,v}(T_{z,v})$, and by Lemma \ref{lemma:sigmaClosure} it holds that $p \in  \overline{\sigma(z,v)} = \sigma(z,v)$. 
Since $p \in \locus(z)$ we have by Lemma \ref{lem:cut_time}, that there either exists a second distance minimizing geodesic from $z$ to $p$ or $p$ is a conjugate point to $z$ along $\gamma_{z,v}$. 
In the first case let $w \in P(\Gamma)$ such that $\gamma_{z,w}$ is another unit-speed distance minimizing geodesic from $z$ to $p$. 
We note that $T_{z,v} = \cut(z,w)$.

Let $U$ be a neighborhood of $p$ as in \eqref{eqn:sigma}. 
We consider a sequence $t_i\in [0,T_{z,v}], \: i \in \N$ such that $t_i \to T_{z,v}$ as $i \to \infty$. Then for sufficiently large $i$ the points $p_i = \gamma_{z,v}(t_i)$ and $q_i = \gamma_{z,w}(t_i)$ are in $U$ and converge to $p$.
By the continuity of the boundary gradient in $U$ we have $\grad_{\dM} r_{p_i}(z) \to \grad_{\dM} r_p(z)$ and $\grad_{\dM} r_{q_i}(z) \to \grad_{\dM} r_p(z)$, when $i \to \infty$.  However, by construction $\grad_{\dM} r_{p_i}(z)=-v$ while $\grad_{\dM} r_{q_i}(z)=-w$ for all $i \in \N$. 
Thus $\grad_{\dM}r_p(z)$ has multiple values, and  $r_p$ is not differentiable at $z$, contradicting that $p \in \sigma(z,v)$.

If the second case is  valid, and since $p\in M^{int}$, we get by a similar proof as in \cite[Theorem 2.1.12]{klingenberg} that the exponential map $\exp_{z}$ is not a local injection at $T_{z,v}v \in T_zM$. From here  \cite[Theorem 2.1.14]{klingenberg} implies that there is a sequence of points $(p_i)_{i=1}^\infty$ in $M^{int}$, that converges to $p$ and can be connected to $z$ by at least two distance minimizing geodesics. 
By the same argument as in the previous case, $r_{p_i}$ is not differentiable at $z$ for any $i \in \N$, which contradicts the fact that $p \in \sigma(z,v)$. 
Thus inequality \eqref{eq:T_less_than_exit} cannot occur and we must have $T_{z,v} = \exit(z,v)$.
\end{proof}

\begin{lemma}
Let $p_0 \in \dM$ and $z_0 \in \Gamma$, $\upo$, and $\vzo$ be as in Theorem \ref{prop:dist_smoothness}. For every $p \in \upo$ we denote $\eta(p) = -\grad_{\dM}r_p(z_0)$. There exists a neighborhood $\upo'\subseteq \upo$ of $p_0$ such that for all $p \in \upo'$ we have that $p$ is in the closed set 
$ \sigma(z_0,\eta(p))$. 
   \label{thm:bndry_tzv=texit}
\end{lemma}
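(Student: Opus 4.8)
The statement says that near a boundary source point $p_0\in\partial M$, the point $p$ itself lies in $\sigma(z_0,\eta(p))$, and moreover that $\sigma(z_0,\eta(p))$ is a closed set. The plan is to verify the four defining conditions of $\sigma(z_0,\eta(p))$ from \eqref{eqn:sigma} with the choice $v=\eta(p)$, and then handle closedness separately. By Theorem \ref{prop:dist_smoothness} we already have neighborhoods $\upo$ of $p_0$ and $\vzo$ of $z_0$ on which $d(\cdot,\cdot)$ is smooth; hence for every $q\in\upo$ the function $r_q$ is smooth near $z_0$, and $q\mapsto\grad_{\dM}r_q(z_0)$ is smooth, in particular continuous, on $\upo$. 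This immediately gives the first two membership conditions for any $q\in\upo$, with $U=\upo$. The third condition, $\grad_{\dM}r_p(z_0)=-\eta(p)$, holds by the very definition $\eta(p)=-\grad_{\dM}r_p(z_0)$. The fourth ``condition'' $p\in\{z_0\}$ is vacuous; what remains is to produce a possibly smaller neighborhood $\upo'\subseteq\upo$ of $p_0$ on which $\sigma(z_0,\eta(p))$ is a \emph{closed} subset of $M$.

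\textbf{Closedness of $\sigma(z_0,\eta(p))$.} This is the real content. By Lemma \ref{lemma:sigmaClosure}, $\overline{\sigma(z_0,w)}=\gamma_{z_0,w}([0,\cut(z_0,w)])$ for any $w\in P(\Gamma)$, so $\sigma(z_0,w)$ fails to be closed precisely when it misses the cut point $\gamma_{z_0,w}(\cut(z_0,w))$; equivalently $\sigma(z_0,w)$ is closed if and only if that cut point itself belongs to $\sigma(z_0,w)$. For $w=\eta(p)$ the relevant geodesic is $\gamma_{z_0,\eta(p)}$, which by construction is the distance-minimizing geodesic from $z_0$ to $p$ (it has the correct initial tangential component and unit speed, hence is uniquely determined, cf. the argument in Lemma \ref{lem:unique}); and $p$ lies on it at parameter $r_p(z_0)\le\cut(z_0,\eta(p))$. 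So the task reduces to showing: for $p$ close enough to $p_0$, the point $\gamma_{z_0,\eta(p)}(\cut(z_0,\eta(p)))$ lies in $\sigma(z_0,\eta(p))$, i.e. $p$ is not strictly before the cut point and in fact $r_p(z_0)=\cut(z_0,\eta(p))$, or at least the endpoint is attained inside $\sigma$.

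\textbf{Key idea.} Since $p_0\in\partial M$, the minimizing geodesic from $z_0$ to $p_0$ hits the boundary at $p_0$ and therefore \emph{exits immediately} by geodesic convexity; so $\exit(z_0,\eta(p_0))=r_{p_0}(z_0)$. On the other hand $\cut(z_0,\eta(p_0))\le\exit(z_0,\eta(p_0))=r_{p_0}(z_0)$, and since $\gamma_{z_0,\eta(p_0)}$ minimizes all the way to $p_0$ we get $\cut(z_0,\eta(p_0))=r_{p_0}(z_0)=\exit(z_0,\eta(p_0))$. Thus at $p_0$ the cut point \emph{is} $p_0$ and coincides with the exit point. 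I expect this to persist in a neighborhood: by continuity of $\exit$, $\cut$ (Lemma \ref{lem:cut_dis_is_cont}), $r_\cdot(z_0)$, and of $p\mapsto\eta(p)$, for $p$ in a small $\upo'\subseteq\upo$ we can arrange that $r_p(z_0)$ is so close to $\cut(z_0,\eta(p))$ and $\exit(z_0,\eta(p))$ that the minimizing geodesic $\gamma_{z_0,\eta(p)}$ does not re-enter the interior past $p$ — equivalently $\cut(z_0,\eta(p))=r_p(z_0)$ — so that $p=\gamma_{z_0,\eta(p)}(\cut(z_0,\eta(p)))\in\overline{\sigma(z_0,\eta(p))}$, and since that endpoint is exactly $p\in\sigma(z_0,\eta(p))$ by the membership conditions above, the set is closed. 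I would spell this out by a contradiction/compactness argument: if no such $\upo'$ existed, pick $p_i\to p_0$ with $r_{p_i}(z_0)<\cut(z_0,\eta(p_i))$; the geodesics $\gamma_{z_0,\eta(p_i)}$ converge to $\gamma_{z_0,\eta(p_0)}$, forcing $\cut(z_0,\eta(p_0))\ge\limsup\cut(z_0,\eta(p_i))\ge\limsup \exit$-type bound that strictly exceeds $r_{p_0}(z_0)$, contradicting $\cut(z_0,\eta(p_0))=r_{p_0}(z_0)$.

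\textbf{Main obstacle.} The subtle point is controlling $\cut(z_0,\eta(p))$ from above near $p_0$: a priori the cut time could jump up as $p$ moves off the boundary into the interior (the geodesic might continue minimizing past $p$ once $p$ is interior). What saves us is that $\exit(z_0,\eta(p))$ is continuous and close to $\exit(z_0,\eta(p_0))=r_{p_0}(z_0)$, together with strict convexity forcing the geodesic to leave immediately — but one must be careful because $\eta(p_0)$ is only a \emph{boundary-tangential} datum and the full inward unit vector over $z_0$ is recovered via the diffeomorphism $h$ of \eqref{eqn:h}; I would note that $p\mapsto\eta(p)\in P(\Gamma)$ is continuous and hence the corresponding inward unit vectors vary continuously, so $\exit$ and $\cut$ evaluated along them are continuous in $p$. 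Assembling these continuity facts around the base case at $p_0$ is the heart of the argument; everything else is bookkeeping against the definition \eqref{eqn:sigma}.
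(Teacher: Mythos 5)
Your verification that $p\in\sigma(z_0,\eta(p))$ for every $p\in\upo$, and your reduction of closedness (via Lemma \ref{lemma:sigmaClosure}) to the statement that the endpoint $\gamma_{z_0,\eta(p)}(\cut(z_0,\eta(p)))$ itself belongs to $\sigma(z_0,\eta(p))$, are both correct. The gap is in the closedness step: your key claim that $\cut(z_0,\eta(p))=r_p(z_0)$ for all $p$ in some neighborhood $\upo'$ is false for every \emph{interior} point $p$ near $p_0$. For such $p$ the minimizing geodesic from $z_0$ through $p$ does not stop at $p$; it continues into $M$ and keeps minimizing until it exits, so the endpoint of $\overline{\sigma(z_0,\eta(p))}$ is the exit point $q_p:=\gamma_{z_0,\eta(p)}(\exit(z_0,\eta(p)))$, which is strictly beyond $p$. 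Your contradiction argument cannot repair this: for interior $p_i\to p_0$ one genuinely has $r_{p_i}(z_0)<\cut(z_0,\eta(p_i))$, and continuity of $\cut$ (Lemma \ref{lem:cut_dis_is_cont}) only yields that both sides converge to the common limit $r_{p_0}(z_0)$ --- a strict inequality with coinciding limits, hence no contradiction. In short, you are trying to prove that $p$ is the cut point, whereas what must be proved is that the (generally different) cut point lies in $\sigma(z_0,\eta(p))$.

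The missing ingredient, which is the paper's actual argument, is to track the exit point rather than $p$. Since $z_0\notin\locus(p_0)$, the points $z_0$ and $p_0$ are not conjugate, so $\mathrm D\exp_{z_0}$ is invertible at $t_0v_0$ where $v_0=h^{-1}(\eta(p_0))$ and $t_0=\exit(z_0,v_0)=r_{p_0}(z_0)$; on $\upo\times\vzo$ one has $d(q,z_0)=\|\exp_{z_0}^{-1}(q)\|_g$ for the corresponding local inverse. By continuity of $\exit$ on the non-trapping set and of $p\mapsto\eta(p)$, the exit point $q_p=\exp_{z_0}\bigl(\exit(z_0,\eta(p))\,h^{-1}(\eta(p))\bigr)$ converges to $p_0$ as $p\to p_0$, hence lies in $\upo$ for $p$ in a small enough $\upo'$. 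Then $d(q_p,z_0)=\exit(z_0,\eta(p))$, so the geodesic minimizes all the way to its exit, i.e.\ $\cut(z_0,\eta(p))=\exit(z_0,\eta(p))$, and $q_p\in\sigma(z_0,\eta(p))$ by exactly the membership check you already carried out for $p$. This is what makes $\sigma(z_0,\eta(p))=\gamma_{z_0,\eta(p)}([0,\cut(z_0,\eta(p))])$ closed.
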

\begin{proof}
By these assumptions, $d(\cdot, \cdot)$ in $\upo\times \vzo$ is smooth.
Define $v_0 = \eta(p_0)$ and  $t_0 = \exit(z_0, v_0)$, then $p_0 = \exp_{z_0}(t_0v_0)$. Since $z_0$ was chosen to be a point outside the cut locus of $p_0$, these points are not conjugate to each other along the geodesic $\gamma_{z_0,v_0}$ connecting them. Therefore the differential $\mathrm{D}\exp_{z_0}$ of the exponential map is invertible at $t_0v_0\in T_{z_0}M$. From here the claim follows from the Inverse function theorem for $\exp_{z_0}$ near $t_0v_0$, the continuity of the exit time function 
on the non-trapping part of $SM$,
and the inequality 
\[
r_p(z_0)=\|\exp_{z_0}^{-1}(p)\|_{q}\leq \exit(z_0,\eta(p)), \quad \text{for } p \in \upo.
\]
We omit the further details. 

\end{proof}

\begin{corollary}
Let $p_0 \in \dM$, $z_0 \in \Gamma$ and $\upo'$ be as in Lemma \ref{thm:bndry_tzv=texit}. If we denote $\eta(p) = -\grad_{\dM}r_p(z_0)$ then $T_{z_0,\eta(p)}$ is smooth for all $p \in \upo'$.
\label{cor:Tzv_smooth}
\end{corollary}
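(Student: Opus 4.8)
The plan is to reduce the smoothness of $p \mapsto T_{z_0,\eta(p)}$ to the smoothness of the exit time function, using Lemma \ref{thm: Tzv=exit} and Lemma \ref{thm:bndry_tzv=texit}. First I would recall that by Lemma \ref{thm:bndry_tzv=texit}, after shrinking $\upo$ to $\upo'$, every point $p \in \upo'$ lies in the \emph{closed} set $\sigma(z_0,\eta(p))$. Therefore Lemma \ref{thm: Tzv=exit} applies with $(z,v) = (z_0,\eta(p))$ and yields
\[
T_{z_0,\eta(p)} = \exit(z_0,\eta(p)), \qquad p \in \upo'.
\]
So it suffices to show that the composition $p \mapsto \exit(z_0,\eta(p))$ is smooth on (a possibly smaller neighborhood of $p_0$ inside) $\upo'$.

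Next I would analyze the two factors of this composition. The map $p \mapsto \eta(p) = -\grad_{\dM} r_p(z_0)$ is smooth on $\upo'$: indeed $d(\cdot,\cdot)$ is smooth on $\upo \times \vzo$ by Theorem \ref{prop:dist_smoothness}, hence $r_p(z) = d(p,z)$ depends smoothly on $(p,z)$ there, and the boundary gradient $\grad_{\dM} r_p(z_0)$ is obtained from the first-order boundary derivatives of this smooth function together with the (smooth, fixed) first fundamental form $g|_\Gamma$ at $z_0$ from Lemma \ref{lem:ball_at_the_boundary}. Thus $\eta\colon \upo' \to P_{z_0}(\Gamma)$ is a smooth map into the inward-pointing unit vectors at $z_0$ (under the identification \eqref{eqn:h}). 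For the second factor, I would invoke the standard fact, recalled at the beginning of Section \ref{sec:dist_funct}, that the exit time function $\exit$ is smooth on $J \setminus T\dM$, i.e.\ at non-trapped directions that are strictly inward (or strictly outward) pointing. Since $p_0$ is connected to $z_0$ by the geodesic $\gamma_{z_0,v_0}$ with $v_0 = \eta(p_0)$ strictly inward pointing and $\exit(z_0,v_0) = t_0 < \infty$ (the geodesic reaches $p_0 \in \dM$ and, being non-trapped at least along this ray, exits), the direction $(z_0,v_0)$ lies in the open set $J \setminus T\dM$ on which $\exit$ is smooth. By continuity of $\eta$, after shrinking $\upo'$ we may assume $\eta(\upo') \subset J \setminus T\dM$, so $p \mapsto \exit(z_0,\eta(p))$ is a composition of smooth maps and hence smooth.

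Combining the two displays, $T_{z_0,\eta(p)} = \exit(z_0,\eta(p))$ is smooth in $p$ on this neighborhood, which is the claim. The main obstacle I anticipate is the bookkeeping around which neighborhood one works on and whether $(z_0,\eta(p_0))$ genuinely lands in $J \setminus T\dM$ rather than on $T\dM$ (the strictly-inward condition is what makes $\exit$ smooth rather than merely continuous); this is handled precisely because $\eta(p_0) = -\grad_{\dM} r_{p_0}(z_0)$ is the tangential part of a \emph{unit} velocity vector whose geodesic enters $M^{\text{int}}$ immediately by strict convexity, so it is strictly inward pointing, and continuity of $\eta$ propagates this to a neighborhood. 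Everything else is a routine chain-rule and open-set argument.
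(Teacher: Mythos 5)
Your proposal is correct and follows essentially the same route as the paper: reduce $T_{z_0,\eta(p)}$ to $\exit(z_0,\eta(p))$ via Lemmas \ref{thm:bndry_tzv=texit} and \ref{thm: Tzv=exit}, then compose the smooth map $p\mapsto\eta(p)$ with the exit time function, which is smooth at non-trapped, non-tangential directions. Your extra care about $(z_0,\eta(p))$ landing in $J\setminus T\dM$ is a detail the paper leaves implicit, and it is handled correctly.
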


\begin{proof}
Since the exit time function is smooth on those $(z,v)\in \din SM$ that satisfy $\exit(z,v)<\infty$ we only need to show that
$
T_{z,\eta(p)}=\exit(z_0,\eta(p)).
$
This equation follows from lemmas \ref{thm: Tzv=exit} and  \ref{thm:bndry_tzv=texit}.
\end{proof}

We are now ready to determine the boundary of $M$ from the data \eqref{eqn:data}. 
\begin{proposition}
Let $(M,g)$ be as in Theorem \ref{prop:dist_smoothness} and $p_0 \in M$. Then $p_0 \in \dM$ if and only if there exists $(z,v) \in P(\Gamma)$ such that  $p_0 \in \sigma(z,v)$ and $r_{p_0}(z) = T_{z,v}$. 
\label{thm:bndry_defining}
\end{proposition}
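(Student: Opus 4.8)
The plan is to prove the two implications separately, using the machinery from Section~\ref{sec:dist_funct} and the earlier results of this subsection. For the forward direction, suppose $p_0 \in \dM$. We must produce a pair $(z,v) \in P(\Gamma)$ with $p_0 \in \sigma(z,v)$ and $r_{p_0}(z) = T_{z,v}$. Here is where we invoke Theorem~\ref{prop:dist_smoothness}: for the point $p_0$ there is an open dense $W_{p_0} \subset \p M$ and, for $z_0 \in W_{p_0} \cap \Gamma$ (which is non-empty since $W_{p_0}$ is dense and $\Gamma$ is open), neighborhoods $\upo$ of $p_0$ and $\vzo$ of $z_0$ on which $d(\cdot,\cdot)$ is smooth. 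Set $v = \eta(p_0) = -\grad_{\dM} r_{p_0}(z_0)$, which lies in $P(\Gamma)$ because the distance-minimizing geodesic from $z_0$ to $p_0$ is transversal to $\p M$ (strict convexity forces immediate exit, so its tangential component has norm strictly less than one). By Lemma~\ref{thm:bndry_tzv=texit}, after shrinking to $\upo'$ we have $p_0 \in \sigma(z_0, \eta(p_0))$ and this set is closed; then Lemma~\ref{thm: Tzv=exit} gives $T_{z_0,v} = \exit(z_0,v)$. Finally, since $p_0 \in \dM$ and $\gamma_{z_0,v}$ is the distance minimizer from $z_0$ reaching $p_0$ exactly when it exits $M$, we get $r_{p_0}(z_0) = \exit(z_0,v) = T_{z_0,v}$, which is the desired identity.

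For the reverse direction, suppose there is $(z,v) \in P(\Gamma)$ with $p_0 \in \sigma(z,v)$ and $r_{p_0}(z) = T_{z,v}$. By Lemma~\ref{lemma:sigmaClosure}, $\overline{\sigma(z,v)} = \gamma_{z,v}([0,\cut(z,v)])$, and by Corollary~\ref{cor:cut=Tzv} we have $T_{z,v} = \cut(z,v)$. Since $p_0 \in \sigma(z,v)$ and $r_{p_0}(z) = d(p_0,z) = T_{z,v} = \cut(z,v)$, the point $p_0$ is the far endpoint $\gamma_{z,v}(\cut(z,v))$ of this maximal minimizing segment. Now I argue by contradiction: assume $p_0 \in M^{\text{int}}$. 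Then $p_0$ lies in the cut locus $\locus(z)$ with $\cut(z,v) < \exit(z,v)$ (if instead $\cut(z,v) = \exit(z,v)$, the geodesic would have already exited, contradicting $p_0 \in M^{\text{int}}$). By Lemma~\ref{lem:cut_time}, either there is a second distance-minimizing geodesic from $z$ to $p_0$, or $p_0$ is conjugate to $z$ along $\gamma_{z,v}$. In both cases the argument of Lemma~\ref{thm: Tzv=exit} applies verbatim: approximating $p_0$ along the two minimizers (respectively, using \cite[Theorem 2.1.12, Theorem 2.1.14]{klingenberg} to produce interior points near $p_0$ with two minimizers to $z$) yields a sequence $p_i \to p_0$ with $\grad_{\dM} r_{p_i}(z)$ taking two distinct values $-v$ and $-w$, so the limit $\grad_{\dM} r_{p_0}(z)$ is not well defined and $r_{p_0}$ fails to be differentiable at $z$, contradicting $p_0 \in \sigma(z,v)$. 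Hence $p_0 \in \dM$.

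The main obstacle is the forward direction's reliance on a point of $\Gamma$ lying outside the cut locus of the boundary point $p_0$: this is exactly what Theorem~\ref{prop:dist_smoothness} (via Theorem~\ref{thm:cut_locus}\eqref{C4}) guarantees, and without it one could not ensure that $\grad_{\dM} r_{p_0}$ exists at any point of $\Gamma$ at all. Once that smoothness is in hand, the chain $p_0 \in \sigma(z_0,\eta(p_0))$, $\sigma$ closed, $T = \exit$, $r_{p_0}(z_0) = \exit$ follows from the already-established lemmas with little additional work. The reverse direction is essentially a repackaging of the proof of Lemma~\ref{thm: Tzv=exit}; the only point requiring care is verifying that $p_0 \in M^{\text{int}}$ indeed forces $\cut(z,v) < \exit(z,v)$, which holds because $\gamma_{z,v}$ exits $M$ precisely at time $\exit(z,v)$ and a point with $d(p_0,z) = \exit(z,v)$ would then lie on $\p M$.
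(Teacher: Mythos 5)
Your proposal is correct and follows essentially the same route as the paper: the forward direction is the same chain (Theorem~\ref{prop:dist_smoothness} to pick $z_0\in W_{p_0}\cap\Gamma$, Lemma~\ref{thm:bndry_tzv=texit} for closedness of $\sigma(z_0,\eta(p_0))$, Lemma~\ref{thm: Tzv=exit} for $T_{z_0,v}=\exit(z_0,v)$, and strict convexity to identify $r_{p_0}(z_0)$ with the exit time). In the reverse direction the paper simply observes that $p_0=\gamma_{z,v}(\cut(z,v))\in\sigma(z,v)$ forces $\sigma(z,v)=\overline{\sigma(z,v)}$ and then cites Lemma~\ref{thm: Tzv=exit} as a black box, whereas you re-run its contradiction argument inline; the content is identical.
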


\begin{proof}
If $p_0 \in \dM$ then we get from Lemma \ref{thm:bndry_tzv=texit} that there exists $(z_0, v) \in P(\Gamma)$ such that $p_0$ is in the closed set $\sigma(z_0, v)$. By Lemma \ref{thm: Tzv=exit} we have  $T_{z_0, v} = \exit(z_0, v)$. Firstly the strict convexity of $\dM$ implies that each geodesic has at most two boundary points. Secondly since $p_0 \neq z_0$ are both boundary points contained in $\sigma(z_0, v)$, which is a trace of a distance minimising geodesic, it follows that 
$
 T_{z_0,v}
 = r_{p_0}(z_0).
$

To show the reverse direction, let $(z,v) \in P(\Gamma)$ be such that  $p_0 \in \sigma(z,v)$ and $T_{z,v} = r_{p_0}(z)$. Thus 
$
\gamma_{z,v}([0,r_{p_0}(z)]) 
\subseteq \sigma(z,v),
$
and it follows from Lemma \ref{lemma:sigmaClosure} and Corollary \ref{cor:cut=Tzv} that 
$\sigma(z,v)$ is closed. 
By Lemma \ref{thm: Tzv=exit}, the closedness of $\sigma(z,v)$ implies $T_{z,v} = \exit(z,v)$. Thus, $r_{p_0}(z) = \exit(z,v)$, making $p_0 \in \dM$. 
\end{proof}

\subsection{Local Coordinates}
By Proposition \ref{thm:bndry_defining} we have reconstructed the boundary $\p M$ of the smooth manifold $M$. In this section we use the partial travel time data \eqref{eqn:data} to construct two local coordinate systems for $p_0\in M$. Since $M$ has a boundary, we need different coordinates systems based on whether $p_0 \in M^{int}$ or $p_0 \in \dM$. 
 
\begin{proposition}
Let $(M,g)$ be as in Theorem \ref{prop:dist_smoothness}. Let $p_0 \in M^{int}$, and choose $z_0 \in \Gamma$, $\upo$, and $\vzo$ as in Theorem \ref{prop:dist_smoothness}. Let the map $\alpha: \upo \to P_{z_0}(\Gamma) \times \R$ be defined as 
\begin{equation}
    \label{eq:func_phi_z}
\alpha(p) = (-\grad_{\dM}r_p(z_0) ,  r_p(z_0) ).
\end{equation}
This map is a diffeomorphism onto its image $\alpha(\upo)\subset  P_{z_0}(\Gamma) \times \R$.
\label{thm:int_coords}
\end{proposition}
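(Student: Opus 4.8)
The plan is to show that $\alpha$ is a smooth injective immersion from the $n$-dimensional manifold $\upo$ into the $n$-dimensional manifold $P_{z_0}(\Gamma)\times\R$, and then conclude it is a diffeomorphism onto its image by invariance of domain (or by the inverse function theorem, since an injective immersion between manifolds of equal dimension is a local diffeomorphism, hence an open map, hence a homeomorphism onto its image with smooth inverse). The smoothness of $\alpha$ is immediate from Theorem~\ref{prop:dist_smoothness}: since $d(\cdot,\cdot)$ is smooth on $\upo\times\vzo$, the function $p\mapsto r_p(z_0)=d(p,z_0)$ is smooth on $\upo$, and likewise $p\mapsto\grad_{\dM}r_p(z_0)$ is smooth because it is obtained from the smooth boundary metric $g|_\Gamma$ (recovered in Lemma~\ref{lem:ball_at_the_boundary}) applied to the smooth differential $d_z r_p(z_0)$.

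\textbf{Injectivity.} I would argue exactly as in the proof of Lemma~\ref{lem:unique}: if $\alpha(p_1)=\alpha(p_2)$ for $p_1,p_2\in\upo$, then $r_{p_1}(z_0)=r_{p_2}(z_0)$ and $\grad_{\dM}r_{p_1}(z_0)=\grad_{\dM}r_{p_2}(z_0)$. Since $z_0$ lies outside the cut locus of each $p_i$ (shrinking $\upo,\vzo$ if necessary so that this holds for all points of $\upo$, which is possible by Theorem~\ref{prop:dist_smoothness} and Proposition~\ref{cor:smoothness_of_d}), there is a \emph{unique} distance-minimizing geodesic $\gamma_i$ from $z_0$ to $p_i$, and $\grad_{\dM}r_{p_i}(z_0)$ is the tangential component of its exit velocity $\dot\gamma_i(r_{p_i}(z_0))$ at $z_0$. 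The normal component is then determined by the unit-length condition together with the requirement that the geodesic enters $M$ (strict convexity of $\dM$ pins down the sign). Hence $\dot\gamma_1(0)=\dot\gamma_2(0)$ after reversing parametrization, so $\gamma_1=\gamma_2$ as geodesics, and evaluating at the common time $r_{p_1}(z_0)=r_{p_2}(z_0)$ gives $p_1=p_2$.

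\textbf{Immersion.} This is the heart of the matter and the step I expect to be the main obstacle. Consider the map $\beta:P_{z_0}(\Gamma)\times(0,\infty)\to M$, $(w,t)\mapsto\exp_{z_0}(t\,\iota(w))$, where $\iota$ is the diffeomorphism $h^{-1}$ from \eqref{eqn:h} sending a tangent vector $w$ with $\|w\|_g<1$ to the inward-pointing unit vector at $z_0$ with that tangential component. Since $z_0\notin\locus(p)$ for $p\in\upo$, the differential $\mathrm D\exp_{z_0}$ is nonsingular at the relevant point $t_0\iota(w_0)$, and $t\mapsto t\,\iota(w)$ is an immersion in $t$; because $\iota$ is a diffeomorphism onto a submanifold of $S_{z_0}M$ transverse to the radial direction, $\beta$ is a local diffeomorphism near $(w_0,t_0)=(\grad_{\dM}r_{p_0}(z_0)\ \text{(negated)},\,r_{p_0}(z_0))$. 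One checks that $\beta\circ\alpha=\id_{\upo}$ near $p_0$: indeed $\alpha(p)=(-\grad_{\dM}r_p(z_0),r_p(z_0))$ records precisely the data determining the unique minimizing geodesic from $z_0$ to $p$ and its length, and $\beta$ reconstructs the endpoint. Therefore $\mathrm D\beta\circ\mathrm D\alpha=\Id$, which forces $\mathrm D\alpha$ to be injective, i.e.\ $\alpha$ is an immersion; since $\dim\upo=\dim\bigl(P_{z_0}(\Gamma)\times\R\bigr)=n$, it is a local diffeomorphism.

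\textbf{Conclusion.} Being an injective local diffeomorphism, $\alpha$ is an open map and a homeomorphism onto its (open) image $\alpha(\upo)\subset P_{z_0}(\Gamma)\times\R$, with smooth local inverses patching to a global smooth inverse; hence $\alpha$ is a diffeomorphism onto its image. The main subtlety to be careful about is ensuring that $\upo$ and $\vzo$ are chosen small enough that $z_0$ stays outside $\locus(p)$ for \emph{every} $p\in\upo$ simultaneously (so that the minimizing geodesic is unique and $\mathrm D\exp_{z_0}$ is nonsingular uniformly), and that the identification $\iota$ of $P_{z_0}(\Gamma)$ with a piece of $S_{z_0}M$ is legitimate using only the recovered boundary metric $g|_\Gamma$; both are guaranteed by Theorem~\ref{prop:dist_smoothness}, Proposition~\ref{cor:smoothness_of_d}, and Lemma~\ref{lem:ball_at_the_boundary}.
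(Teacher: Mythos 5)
Your proposal is correct and follows essentially the same route as the paper: the key object in both is the explicit map $(v,t)\mapsto \exp_{z_0}\left(t\,h^{-1}(v)\right)$, which the paper simply exhibits as the smooth two-sided inverse of $\alpha$, while you use it as a smooth left inverse to deduce injectivity and the immersion property before invoking equality of dimensions. The extra steps (separate injectivity argument, immersion via $\mathrm{D}\beta\circ\mathrm{D}\alpha=\Id$, invariance of domain) are a slightly longer path to the same conclusion but introduce no new ideas and no gaps.
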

\begin{proof}
Since the distance function $d(\cdot,\cdot)$ is smooth in $\upo \times V_{p_0}$ also the function $\alpha$ is smooth on $\upo$. By a direct computation we see that the inverse function of $\alpha$, is given as,
\[
\alpha^{-1}(v,t)  = \exp_{z_0}\left(th^{-1}(v) \right), \qquad \text{ for } (v,t) \in P_{z_0}(\Gamma) \times \R.
\]
where $h\colon \din S_{z_0}M\to P_{z_0}(\Gamma)$, is the orthogonal projection given in \eqref{eqn:h}. By the smoothness of $h^{-1}$ and the exponential map, it follows that $\alpha^{-1}$ is smooth. Thus, $\alpha$ is a diffeomorphism onto its image, which is open in $P_{z_0}(\Gamma) \times \R$. 
\end{proof}

In particular, the function $\alpha$, in \eqref{eq:func_phi_z}, gives a local coordinate system near the interior point $p_0$. In order to define a coordinate system for a point at the boundary we will adjust the last coordinate function of $\alpha$ to be a boundary defining function. 

\begin{proposition}
Let $(M,g)$ be as in Theorem \ref{prop:dist_smoothness}. Let $p_0 \in \dM$ and choose $z_0 \in \Gamma$, and $\upo'$ as in Lemma \ref{thm:bndry_tzv=texit}. Let $\eta(p):=-\grad_{\p M}r_p(z_0)$ and  $\beta_{z_0}: \upo' \to P_{z_0}(\Gamma) \times [0,\infty)$ 
be defined as 
\begin{equation}
    \label{eq:func_psi_z}
\beta(p) = (\eta(p), T_{z_0, \eta(p)} - r_p(z_0) ).
\end{equation}
This map is a diffeomorphism onto its image $\beta(\upo')\subset P_{z_0}(\Gamma) \times [0,\infty)$.
 \label{thm:bndry_coords}
\end{proposition}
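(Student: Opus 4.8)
The plan is to reuse the proof of Proposition~\ref{thm:int_coords} almost verbatim, the only new feature being that the last coordinate of $\beta$ must be arranged to be a boundary defining function. Write $v_0:=\eta(p_0)\in P_{z_0}(\Gamma)$. By Corollary~\ref{cor:Tzv_smooth} we have $T_{z_0,\eta(p)}=\exit(z_0,\eta(p))$ for every $p\in\upo'$; since the inward unit vector $h^{-1}(v)$ is never tangent to $\dM$ and $\exit(z_0,h^{-1}(v_0))<\infty$, the exit time function is finite and smooth on a neighbourhood of the directions $h^{-1}(v)$, $v$ near $v_0$ (it is smooth on $J\setminus T\dM$, $J$ the non-trapping part of $SM$). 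Hence $G(v):=\exit(z_0,h^{-1}(v))$ is a smooth function on a neighbourhood $O\subset P_{z_0}(\Gamma)$ of $v_0$, and, after shrinking $\upo'$ so that $\eta(\upo')\subset O$,
\[
\beta(p)=\big(\eta(p),\;G(\eta(p))-r_p(z_0)\big),\qquad p\in\upo'.
\]

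First I would check smoothness of $\beta$. Since $d(\cdot,\cdot)$ is smooth on $\upo\times\vzo$ by Theorem~\ref{prop:dist_smoothness} and the first fundamental form of $\Gamma$ is known and smooth by Lemma~\ref{lem:ball_at_the_boundary}, the map $p\mapsto\grad_{\dM}r_p(z_0)$, hence $\eta$, is smooth on $\upo'$; $r_{(\cdot)}(z_0)$ is smooth, and $T_{z_0,\eta(\cdot)}$ is smooth by Corollary~\ref{cor:Tzv_smooth}. So $\beta$ is smooth, and it maps into $P_{z_0}(\Gamma)\times[0,\infty)$ because, by the inequality established in the proof of Lemma~\ref{thm:bndry_tzv=texit}, $r_p(z_0)=\|\exp_{z_0}^{-1}(p)\|_g\le\exit(z_0,\eta(p))=G(\eta(p))$; moreover the second coordinate of $\beta(p)$ vanishes exactly when $r_p(z_0)=\exit(z_0,\eta(p))$, that is, exactly when $p\in\dM$.

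Second I would construct the inverse. Set $\Phi(v,s):=\exp_{z_0}\!\big((G(v)-s)\,h^{-1}(v)\big)$. Because $z_0\notin\locus(p_0)$, the point $p_0$ is not conjugate to $z_0$ along the unique minimising geodesic joining them, so $\mathrm{D}\exp_{z_0}$ is non-singular at $r_{p_0}(z_0)\,h^{-1}(v_0)$; smoothness of $\exp$ and of $G$ then makes $\Phi$ defined, smooth, and a local diffeomorphism on a neighbourhood of $(v_0,0)$. As in the proof of Proposition~\ref{thm:int_coords} one checks $\Phi\circ\beta=\id$ and $\beta\circ\Phi=\id$ on suitable neighbourhoods: for $(v,s)$ near $(v_0,0)$ the segment $\tau\mapsto\exp_{z_0}(\tau h^{-1}(v))$, $\tau\in[0,G(v)-s]$, is the unique distance minimiser from $z_0$ to $\Phi(v,s)$ --- here one uses $\cut(z_0,h^{-1}(v))=\exit(z_0,h^{-1}(v))=G(v)$ on $\eta(\upo')$, which follows from Corollary~\ref{cor:cut=Tzv} and Lemmas~\ref{thm: Tzv=exit} and~\ref{thm:bndry_tzv=texit} together with continuity of $\cut$ --- so $d(z_0,\Phi(v,s))=G(v)-s$ and the tangential component at $z_0$ of the reversed geodesic equals $-v$, giving $\eta(\Phi(v,s))=v$ and $G(v)-d(z_0,\Phi(v,s))=s$. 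Hence $\beta$ is a diffeomorphism onto its image.

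The step I expect to be the main obstacle is the boundary bookkeeping. A priori $\Phi$ takes values in the closed extension $\hat M$ of Proposition~\ref{prop:extension}, and one must verify that it sends the closed half-space $\{s\ge 0\}$ into $M$ and the slice $\{s=0\}$ onto $\dM$, so that $\beta$ is a diffeomorphism of manifolds with boundary and not merely of their interiors; one also needs $\eta(\upo')$ to be open so that $\beta(\upo')$ is relatively open in $P_{z_0}(\Gamma)\times[0,\infty)$. This is exactly where strict convexity is used: a unit-speed geodesic issued from $z_0$ into $M$ reaches $\dM$ for the first time at parameter $\exit(z_0,\cdot)=G(\cdot)$ and then leaves $M$ immediately without returning near $z_0$, so in the $\beta$-coordinates $M$ is locally $\{s\ge 0\}$ and $\dM$ is locally $\{s=0\}$. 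Combined with the length estimate of the previous step, which shows $\Phi(\{s\ge 0\})\subset M$, this completes the proof.
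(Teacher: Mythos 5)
Your proposal is correct and follows essentially the same route as the paper: smoothness of $\beta$ via Corollary \ref{cor:Tzv_smooth}, the explicit inverse $\beta^{-1}(v,t)=\exp_{z_0}\left(\left(T_{z_0,v}-t\right)h^{-1}(v)\right)$, and its smoothness from the local invertibility of $\exp_{z_0}$ at $r_{p_0}(z_0)h^{-1}(\eta(p_0))$ together with the smoothness of $v\mapsto T_{z_0,v}=\exit(z_0,h^{-1}(v))$ on the open set $\eta(\upo')$. You merely spell out the ``direct computation'' and the boundary bookkeeping ($\{s=0\}$ corresponding to $\p M$ via Lemma \ref{thm: Tzv=exit}) that the paper leaves implicit.
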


\begin{proof}
Since the distance function $d(\cdot,\cdot)$ is smooth in $\upo' \times V_{p_0}$ and $p_0\in \dM$ we have by Corollary \ref{cor:Tzv_smooth}, that the map  $T_{z_0,\eta(p)}$ is smooth for all $p \in \upo'$. Thus $\beta$ is smooth in $\upo'$. Again by a direct computation we get that the inverse function of $\beta$ is given as
\[
 \beta^{-1}(v,t) = \exp_{z_0}\left(\left(T_{z_0, v} - t\right)h^{-1}(v)\right) \qquad \text{ for } (v,t) \in P_{z_0}(\Gamma) \times [0,\infty).
\]
By the local invertibility of the exponential map $\exp_{z_0}$ at $r_{p_0}(z_0)h^{-1}(\eta(p_0))\in T_{z_0}M$ and the equation $r_p(z_0)=\|\exp_{z_0}^{-1}(p)\|_{g}$ for $p \in \upo'$, the set $\eta(\upo')\subset P_{z_0}(\Gamma)$ is open and the function $v\mapsto T_{z_0,v}$, in this set is smooth, making  $\beta^{-1}$ smooth. Thus, $\beta$ is a diffeomorphism onto its image, which is open in $P_{z_0}(\Gamma) \times [0,\infty)$. 

Finally by Proposition \ref{thm: Tzv=exit} we get that $T_{z_0, \eta(p)}-r_p(z_0)=0$ if and only if $p \in \upo'\cap \p M$. Thus this function defines the boundary. 
\end{proof}

Combining the results of Propositions \ref{thm:int_coords} and \ref{thm:bndry_coords}, we know that for $p_0\in M$, either the function $\alpha$ as in \eqref{eq:func_phi_z} or the function $\beta$ as in \eqref{eq:func_psi_z}, gives a smooth local coordinate system. Moreover these maps can be recovered fully from the data \eqref{eqn:data}. As these two types of coordinate charts cover $M$ the smooth structure on $M$ is then the same as the maximal smooth atlas determined by these coordinate charts \cite[Proposition 1.17]{lee2018introduction}.

\subsection{Reconstruction of the Riemannian Metric}
So far we recovered both the topological and smooth structures of the Riemannian manifold $(M,g)$ from the data \eqref{eqn:data}. In this section we recover the Riemannian metric $g$. We recall that by Lemma \ref{lem:ball_at_the_boundary} we know the first fundamental form of $\Gamma$. 

In order to recover the metric on $M$ we consider the distance function
\[
d(p,z)=r_p(z),  \quad \text{for }(p,z) \in M \times \Gamma,
\]
which we have recovered by Proposition \ref{lemma:homeo}. 
Let $p_0 \in M$. By Theorem \ref{prop:dist_smoothness} we can choose $z_0\in \Gamma$ and neighborhoods $\upo$ and $\vzo$ for $p_0$ and $z_0$ respectively such that the distance function $d(p,z)$ for $(p,z)\in \upo\times \vzo$ is smooth. Thus the map 
\begin{equation}
\label{eq:grad_map}
H_{p_0}\colon \vzo\cap \Gamma \to T^\ast_{p_0}M,\quad H_{p_0 }(z)=\mathrm{D} d(p_0,z)
\end{equation}
is well defined and smooth. Here $\mathrm{D}$ stands for the differential of the distance function $d(p,z)$ with respect to the $p$ variable in the open set $\upo\subset M$ and $T^\ast_{p_0}M$ is the cotangent space at $p_0$. As we have recovered the smooth structure of $M$ we can find $H_{p_0}$. 

For $z \in \vzo$ the gradient $\text{grad}_pd(p,z)$ for $p \in \upo$ is the velocity of the distance minimizing unit speed geodesic from $z$ to $p$ (see for instance \cite[theorems 6.31, 6.32]{lee2018introduction}). In particular the map 
\[
\tilde{H}_{p_0} \colon (\vzo \cap \Gamma) \ni z \to \text{grad}_pd(p_0,z)\in S_{p_0}M
\]
is well defined and satisfies
$
\tilde{H}_{p_0}(z)=H_{p_0}(z)^\sharp,
$
where $\sharp \colon T^\ast_{p_0} M \to T_{p_0}M$ is the musical isomorphism, raising the indices, given in any local coordinates near $p_0$ as
$
(\xi^\sharp)^i=g^{ij}(p_0)\xi_j.
$
Note that the inverse of $\sharp$ is given by $\flat:T_{p_0}M \to T_{p_0}^*M$, that lowers the indices. 
Although we know the map $H_{p_0}$, we do not know its sister map $\tilde{H}_{p_0}$. 

\begin{lemma}
\label{lem:open_subset_unit_sphere}
Let $p_0\in M$. Let $z_0\in \Gamma$, $\upo$ and $\vzo$ be as in Theorem \ref{prop:dist_smoothness}. Then the image of the map $H_{p_0}$, as in \eqref{eq:grad_map}, 
is contained the unit co-sphere
\[
S^\ast_{p_0}M:=\{\xi \in T_{p_0}^\ast M: \: \|\xi\|_{g^{-1}}=1\},
\]
and has a nonempty interior.
\end{lemma}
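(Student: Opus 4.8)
The plan is to handle the two assertions in turn. For the first, recall --- as noted just before the lemma --- that for $z\in\vzo\cap\Gamma$ the vector $\tilde H_{p_0}(z)=\grad_p d(p_0,z)$ is the velocity at $p_0$ of the unit-speed distance minimizing geodesic running from $z$ to $p_0$. Writing $w_z\in S_{p_0}M$ for the initial direction at $p_0$ of the minimizing geodesic $\gamma_{p_0,w_z}$ from $p_0$ to $z$, so that $\exp_{p_0}(d(p_0,z)\,w_z)=z$, this means $\tilde H_{p_0}(z)=-w_z$, and in particular $\|\tilde H_{p_0}(z)\|_g=1$. Since $H_{p_0}(z)=\tilde H_{p_0}(z)^\flat$ and $\flat\colon(T_{p_0}M,g)\to(T^\ast_{p_0}M,g^{-1})$ is a linear isometry carrying $S_{p_0}M$ diffeomorphically onto $S^\ast_{p_0}M$, it follows that $H_{p_0}(\vzo\cap\Gamma)\subset S^\ast_{p_0}M$, and that it suffices to prove that $\{w_z:z\in\vzo\cap\Gamma\}$ has nonempty interior in $S_{p_0}M$.

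To do this I would show that the map $z\mapsto w_z$ is a local diffeomorphism near $z_0$. Recall from the proof of Proposition \ref{cor:smoothness_of_d} --- which underlies Theorem \ref{prop:dist_smoothness} --- that, since $z_0\notin\locus(p_0)$ (this is how $W_{p_0}$ arises in Theorem \ref{prop:dist_smoothness}), the exponential map $\exp_{p_0}$ restricts to a diffeomorphism from a neighborhood of $v_0:=d(p_0,z_0)\,w_{z_0}=\exp_{p_0}^{-1}(z_0)$ onto a neighborhood of $z_0$, and $d(p_0,z)=\|\exp_{p_0}^{-1}(z)\|_g$, hence $w_z=\exp_{p_0}^{-1}(z)/\|\exp_{p_0}^{-1}(z)\|_g$, for $z\in\vzo$ near $z_0$. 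Consequently $\Sigma:=\exp_{p_0}^{-1}(\vzo\cap\Gamma)$ is an embedded hypersurface of $T_{p_0}M$ through $v_0$ with tangent space $T_{v_0}\Sigma=(\mathrm{D}\exp_{p_0}(v_0))^{-1}(T_{z_0}\p M)$ (identifying $T_{v_0}(T_{p_0}M)$ with $T_{p_0}M$), and $z\mapsto w_z$ is, up to this embedding, the radial projection $\pi\colon T_{p_0}M\setminus\{0\}\to S_{p_0}M$ restricted to $\Sigma$. Since $\pi$ is a submersion whose kernel at $v_0$ is the line $\R v_0$ and $\dim\Sigma=n-1=\dim S_{p_0}M$, the restriction $\pi|_\Sigma$ is a local diffeomorphism at $v_0$ as soon as $v_0\notin T_{v_0}\Sigma$, that is, as soon as $\mathrm{D}\exp_{p_0}(v_0)v_0$ --- which is a positive multiple of the arrival velocity $\dot\gamma_{p_0,w_{z_0}}(d(p_0,z_0))$ of the minimizing geodesic at $z_0$ --- fails to be tangent to $\p M$ at $z_0$.

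The remaining point, and the only place where strict convexity is used, is exactly this transversality, which I would establish by contradiction along the lines of the proof of Proposition \ref{prop:extension}. Put $\gamma=\gamma_{p_0,w_{z_0}}$, $T=d(p_0,z_0)$, and let $s$ be the boundary defining function of that proof, so that $\grad s(z_0)=\nu(z_0)$ is normal to $\p M$ and $\tilde s(t):=s(\gamma(t))$ satisfies $\tilde s(T)=0$ while, by geodesic convexity, $\gamma((0,T))\subset M^{int}$ and hence $\tilde s(t)<0$ for $t$ slightly below $T$; in particular $\dot{\tilde s}(T)\geq 0$. If $\dot\gamma(T)$ were tangent to $\p M$, then $\dot{\tilde s}(T)=\langle\grad s(z_0),\dot\gamma(T)\rangle_g=0$, and the Weingarten identity (as in \eqref{eq:der_test}) would give $\ddot{\tilde s}(T)=-\Pi_{z_0}(\dot\gamma(T),\dot\gamma(T))>0$ by strict convexity of $\p M$, forcing $\tilde s(t)>0$ for $t$ slightly below $T$ --- a contradiction. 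Hence $\pi|_\Sigma$ is a local diffeomorphism at $v_0$, so $\{w_z:z\in\vzo\cap\Gamma\}$ contains a neighborhood of $w_{z_0}$ in $S_{p_0}M$; negating and applying $\flat$ then shows that $H_{p_0}(\vzo\cap\Gamma)$ contains a neighborhood of $(-w_{z_0})^\flat$ in $S^\ast_{p_0}M$. I expect the transversality step to be the only substantive part, the rest being routine use of the inverse function theorem and the musical isomorphisms.
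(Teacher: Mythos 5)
Your proof is correct, and it fills in a genuine gap that the paper leaves open: the paper's own proof is a two-sentence sketch that "omits the further details." You share its first ingredients --- the local invertibility of $\exp_{p_0}$ near $v_0=\exp_{p_0}^{-1}(z_0)$, the identity $d(p_0,z)=\|\exp_{p_0}^{-1}(z)\|_g$ on $\vzo$, and the fact that $\flat$ is a linear isometry onto $(T^\ast_{p_0}M,g^{-1})$ --- but you close the openness step by a different mechanism. The paper lists as its remaining ingredient the continuity of the exit time function near $v_0/\|v_0\|_g$, which points to running the correspondence in the direction ``initial direction $\mapsto$ exit point'': for every $w\in S_{p_0}M$ close to $w_{z_0}$ the point $z=\gamma_{p_0,w}(\exit(p_0,w))$ lies in $\vzo\cap\Gamma$ and satisfies $w_z=w$ (because $\exit(p_0,w)\,w$ stays in the injectivity neighborhood of $\exp_{p_0}$), so surjectivity onto a neighborhood of $-w_{z_0}$ follows with no injectivity or transversality discussion. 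You instead run the correspondence ``boundary point $\mapsto$ direction'' and prove it is a local diffeomorphism via the radial projection restricted to $\Sigma=\exp_{p_0}^{-1}(\vzo\cap\Gamma)$, which obliges you to verify that the minimizing geodesic meets $\p M$ transversally at $z_0$; your Weingarten/strict-convexity argument for that is sound (it is the same second-derivative computation as in the proof of Proposition \ref{prop:extension}) and is precisely the geometric fact underlying the continuity (indeed smoothness) of $\exit$ at non-tangential directions that the paper cites. The two routes are therefore logically interchangeable: the paper's is shorter because it outsources transversality to the quoted regularity of the exit time function, while yours is self-contained and proves slightly more than the lemma asks, namely that $z\mapsto H_{p_0}(z)$ is a local diffeomorphism onto an open subset of $S^\ast_{p_0}M$.
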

\begin{proof}
Let $v\in T_{p_0}M$ such that $\exp_{p_0}(v)=z_0$. Since $\flat\colon T_{p_0}M\to T^\ast_{p_0}M$ is a linear isomorphism that preserves the inner product, the claim holds due to the local invertibility of the exponential map $\exp_{p}$ near $v$, the equality $d(p_{0},z)=\|\exp_{p_0}^{-1}(z)\|_g$, which is true for all $z \in V_{p_0}$, and the continuity of the exit time function near $\frac{v}{\|v\|_g}\in S_{p_0}M$. We omit the further details. 
\end{proof}
 
Finally Lemma \ref{lem:inner_prod_from_sphere} in conjunction with the previous lemma lets us recover the inverse metric $g^{-1}(p_0)$ and thus the metric $g_{p_0}$. This is formalized in the proposition below.

\begin{proposition}
Let $(M,g)$ be as in Theorem \ref{prop:dist_smoothness} and $p_0 \in M$. The data \eqref{eqn:data} determines the metric tensor $g$ near $p_0$ in the local coordinates given in Propositions  \ref{thm:int_coords} and \ref{thm:bndry_coords}.
 \label{prop:metric}
\end{proposition}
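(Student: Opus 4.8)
The plan is to run the reconstruction of $g$ at $p_0$ through the already-recovered cosphere $S^\ast_{p_0}M$. By Lemma~\ref{lem:open_subset_unit_sphere}, the data-driven map $H_{p_0}\colon \vzo\cap\Gamma \to T^\ast_{p_0}M$, $H_{p_0}(z)=\mathrm Dd(p_0,z)$, has image contained in the unit cosphere $S^\ast_{p_0}M=\{\xi\in T^\ast_{p_0}M:\|\xi\|_{g^{-1}}=1\}$ and this image has nonempty interior there. Here $H_{p_0}$ is computed in the local coordinates supplied by Proposition~\ref{thm:int_coords} (if $p_0\in M^{int}$) or Proposition~\ref{thm:bndry_coords} (if $p_0\in\p M$), and we emphasize that those coordinates are themselves fully data driven, so $H_{p_0}$ as a map into the coordinate cotangent space $\R^n$ is known from \eqref{eqn:data}.

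First I would apply Lemma~\ref{lem:inner_prod_from_sphere} to the finite-dimensional inner product space $(T^\ast_{p_0}M, g^{-1}(p_0))$: a nonempty open subset of the unit sphere $S(1)$ of an inner product space determines that inner product. Since the image of $H_{p_0}$ is such an open subset of $S^\ast_{p_0}M$, we recover $g^{-1}(p_0)$, that is, the matrix $(g^{ij}(p_0))$, in the chosen coordinates. Inverting this matrix yields $g_{ij}(p_0)$, the metric tensor at $p_0$ in those coordinates. Since $p_0\in M$ was arbitrary and the coordinate charts from Propositions~\ref{thm:int_coords} and~\ref{thm:bndry_coords} cover $M$, this determines $g$ near every point, proving the proposition.

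One point that warrants care, rather than a genuine obstacle, is consistency of the recovered metric on overlaps of the two families of charts and the fact that we are recovering $g$ at each point separately: smoothness of $g$ as a tensor field is guaranteed because $d(p,z)$ is jointly smooth on $\upo\times\vzo$ by Theorem~\ref{prop:dist_smoothness}, so $H_{p_0}$ depends smoothly on $p_0$, and the pointwise reconstruction via Lemma~\ref{lem:inner_prod_from_sphere} is given by explicit algebraic formulas in finitely many inner products of vectors in the image, hence varies smoothly. For the boundary charts one also checks that $\eta(\upo')$ being open in $P_{z_0}(\Gamma)$ (established in the proof of Proposition~\ref{thm:bndry_coords}) still forces the image of $H_{p_0}$ to have nonempty interior in $S^\ast_{p_0}M$; this is exactly what Lemma~\ref{lem:open_subset_unit_sphere} asserts. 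With these remarks in place the argument is essentially a citation of Lemmas~\ref{lem:open_subset_unit_sphere} and~\ref{lem:inner_prod_from_sphere}, so the main work has already been done in the preceding results and there is no substantive new difficulty here.
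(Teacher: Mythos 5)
Your proposal is correct and follows essentially the same route as the paper's proof: compute the data-driven map $H_{p_0}$ in the coordinates of Propositions \ref{thm:int_coords} or \ref{thm:bndry_coords}, invoke Lemma \ref{lem:open_subset_unit_sphere} to see its image is an open piece of $S^\ast_{p_0}M$, recover $g^{-1}(p_0)$ via Lemma \ref{lem:inner_prod_from_sphere}, and invert. Your added remarks on smoothness across charts are reasonable extra commentary but do not change the argument.
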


\begin{proof}
Let $z_0\in \Gamma$, $\upo$ and $\vzo$ be as in Theorem \ref{prop:dist_smoothness}. By Proposition \ref{thm:bndry_defining} we can tell whether $p_0$ is an interior or a boundary point. Based on this we choose local coordinates of $p_0$ as in Proposition \ref{thm:int_coords} or as in Proposition \ref{thm:bndry_coords}. Then we consider the function $H_{p_0}$ given in the equation \eqref{eq:grad_map}. By Lemma \ref{lem:open_subset_unit_sphere} we know that image of the function $H_{p_0}$ contains an open subset of $S_{p_0}^*M$. 

From here, by applying Lemma \ref{lem:inner_prod_from_sphere} we determine the inverse metric $g^{ij}(p_0)$ in the aforementioned coordinates.
Finally taking the inverse of $g^{ij}(p_0)$ determines $g_{ij}(p_0)$. As this procedure can be done for any point $p\in M$, which is close enough to $p_0$, we have recovered the metric $g$ near $p_0$ in the appropriate local coordinates. 
\end{proof}

\section{The proof of Theorem \ref{thm:main}}
\label{sec:proof}

Let Riemannian manifolds $(M_1,g_1)$ and $(M_2,g_2)$ be as in Theorem \ref{thm:main}. We recall that the partial travel time data of these manifolds coincide in the sense of Definition \ref{def:data}. Let $(B(\Gamma_i), \|\cdot\|_\infty)$, for $i \in \{1,2\}$, be the Banach space of bounded real valued functions on $\Gamma_i$.  We set a mapping
\begin{equation}
F:B(\Gamma_1) \to B(\Gamma_2), \qquad F(f) = f \circ \phi^{-1},
\label{eqn:pre-compose}
\end{equation}
where $\phi$ is the diffeomorphism from $\Gamma_1$ to $\Gamma_2$. By the triangle inequality we have that $F$ is a metric isometry whose inverse mapping is given by $ F^{-1}(h) = h \circ \phi.$ Taking $R_i:(M_i, g_i) \to (B(\Gamma_i), \|\cdot\|_\infty)$, as in the equation \eqref{eq:map_R}, we have by the equation \eqref{eq:equiv_of_data} in Definition \ref{def:data} that 
\[
F(R_1(M_1))=R_2(M_2).
\]
Therefore we get from Proposition \ref{lemma:homeo} that the map
\begin{equation}
\Psi: (M_1,g_1) \xrightarrow{R_1} (B(\Gamma_1), \|\cdot \|_\infty ) \xrightarrow{F} (B(\Gamma_2), \|\cdot \|_\infty) \xrightarrow{R_2^{-1}}( M_2, g_2),
\label{eqn:diffeo_between_manifolds}
\end{equation}
is a well defined homeomorphism, that satisfies the equation
\begin{equation}
    \label{eq:distances_agree}
d_2(\Psi(x),\phi(z))
=F(d_1(x,\cdot))(\phi(z))
=d_1(x,z),\quad \text{ for all } (x,z)\in M_1\times \Gamma_1.
\end{equation}
Here $d_i(\cdot,\cdot)$ is the distance function of $(M_i,g_i)$. The goal of this section is to show that $\Psi$ is a Riemannian isometry. 
In the following lemma we show first that $\phi$ preserves the Riemannian structure of the measurement regions. 

\begin{lemma}
Let  Riemannian manifolds $(M_1,g_1)$ and $(M_2,g_2)$ be as in Theorem \ref{thm:main}.
Then $\Psi|_{\Gamma_1}=\phi$ and $\phi\colon (\Gamma_1,g_1)\to (\Gamma_2,g_2)$ is a Riemannian isometry. 
\label{lemma:iso_on_gammas}
\end{lemma}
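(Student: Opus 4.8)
The plan is to show the two assertions in sequence, using the coordinate chart $\alpha$ from Proposition \ref{thm:int_coords} (or $\beta$ from Proposition \ref{thm:bndry_coords}) together with the distance identity \eqref{eq:distances_agree}. First I would prove that $\Psi|_{\Gamma_1}=\phi$. Let $z\in\Gamma_1$. By \eqref{eq:distances_agree} we have $d_2(\Psi(z),\phi(w))=d_1(z,w)$ for every $w\in\Gamma_1$. Taking $w=z$ gives $d_2(\Psi(z),\phi(z))=0$, hence $\Psi(z)=\phi(z)$. Since $z\in\Gamma_1$ was arbitrary, $\Psi|_{\Gamma_1}=\phi$, which in particular shows $\phi$ is a homeomorphism onto $\Gamma_2$ (it is a diffeomorphism by hypothesis, so this is only a consistency check, but it also confirms that $\Psi$ maps $\Gamma_1$ into $\Gamma_2$ as a subset of $M_2$ in the way the data dictates).

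Next I would show $\phi$ is a Riemannian isometry between $(\Gamma_1,g_1|_{\Gamma_1})$ and $(\Gamma_2,g_2|_{\Gamma_2})$. This is where Lemma \ref{lem:ball_at_the_boundary} does the work. The idea is to recover the first fundamental form of each $\Gamma_i$ intrinsically from the partial travel time data, and to observe that $\phi$ intertwines the two data sets, hence must intertwine the recovered metrics. Concretely, fix $z\in\Gamma_1$ and a tangent vector $X\in T_z\Gamma_1$; choose a smooth curve $c\colon(-1,1)\to\Gamma_1$ with $c(0)=z$, $\dot c(0)=X$. By the computation \eqref{eq:g_on_boundary} in the proof of Lemma \ref{lem:ball_at_the_boundary}, for any $p\in M_1$ one has
\[
\|X\|_{g_1}=\lim_{t\to 0}\frac{r_p(c(t))-r_p(z)}{t}\Big/\text{(a quantity depending only on }r_p\text{ along }c),
\]
but more to the point, the proof of that lemma shows $\|X\|_{g_1}$ equals $\lim_{t\to0} d_1(z,c(t))/|t|$, which by \eqref{eq:distances_agree} equals $\lim_{t\to0} d_2(\phi(z),\phi(c(t)))/|t|$. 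Since $\phi\circ c$ is a smooth curve in $\Gamma_2$ through $\phi(z)$ with velocity $D\phi(X)$, the same limit formula applied in $M_2$ gives $\|D\phi(X)\|_{g_2}=\|X\|_{g_1}$. As $X$ was arbitrary, $\phi$ preserves the norm on every tangent space $T_z\Gamma_1$, and by polarization (Lemma \ref{lem:inner_prod_from_sphere}, or just the usual polarization identity) it preserves the inner product, i.e. $\phi^*(g_2|_{\Gamma_2})=g_1|_{\Gamma_1}$.

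The main obstacle is making the limit-formula argument rigorous on $M_2$ without first knowing that $\Psi$ is smooth or that $\Psi(c(t))$ traces the same curve as $\phi(c(t))$; the resolution is exactly the first step, $\Psi|_{\Gamma_1}=\phi$, which lets us replace distances to $z\in\Gamma_1$ measured in $M_1$ by distances to $\phi(z)\in\Gamma_2$ measured in $M_2$ via \eqref{eq:distances_agree}, and then invoke the strict convexity of $\partial M_2$ to guarantee $\exp_{\phi(z)}^{-1}$ is well defined near $\phi(z)$ so that $d_2(\phi(z),\cdot)=\|\exp_{\phi(z)}^{-1}(\cdot)\|_{g_2}$ locally, exactly as in \eqref{eq:g_on_boundary}. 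With that in hand the differentiation-of-distance step is identical to the one already carried out in the proof of Lemma \ref{lem:ball_at_the_boundary}, so no new analytic difficulty arises; the only care needed is to note that $\phi$ being a diffeomorphism means $D\phi(X)$ ranges over all of $T_{\phi(z)}\Gamma_2$ as $X$ ranges over $T_z\Gamma_1$, so we indeed recover the full first fundamental form and conclude $\phi$ is a Riemannian isometry.
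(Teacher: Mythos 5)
Your proposal is correct and follows essentially the same route as the paper: first deduce $\Psi|_{\Gamma_1}=\phi$ from $d_2(\Psi(z),\phi(z))=0$ via \eqref{eq:distances_agree}, then use the limit formula \eqref{eq:g_on_boundary} from the proof of Lemma \ref{lem:ball_at_the_boundary} on both manifolds to get $\|\mathrm{D}\phi\, v\|_{g_2}=\|v\|_{g_1}$, and finish by polarization. The only blemish is your first displayed formula, which is garbled, but you immediately replace it with the correct identity $\|X\|_{g_1}=\lim_{t\to 0}d_1(z,c(t))/|t|$, so the argument stands.
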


\begin{proof}
Let $z_1$ be in $\Gamma_1$. From equation \eqref{eq:distances_agree} we get $
d_2(\Psi(z_1),\phi(z_1))=0.
$
Thus $\Psi(z_1)=\phi(z_1)$ and we have verified the first claim  $\Psi|_{\Gamma_1}=\phi$. It follows from the proof of Lemma \ref{lem:ball_at_the_boundary} and equation \eqref{eq:distances_agree} that
$
\|\mathrm{D}\phi v\|_{g_2}=\|v\|_{g_1}, \text{ for all } v \in T\Gamma_1.
$
Then the polarization identity implies that the differential $\mathrm{D}\phi$ of $\phi$ also preserves the first fundamental forms:
\[
\langle \mathrm{D}\phi v_1,\mathrm{D}\phi v_2 \rangle_{g_2}=\langle  v_1, v_2 \rangle_{g_1}, \quad \text{ for all } v_1,v_2 \in T\Gamma_1,
\]
making $\phi$ a Riemannian isometry.
%
\end{proof}

In particular we get from this lemma that
$
\mathrm{D}\phi (P(\Gamma_1))=P(\Gamma_2).
$
Next we show that the mapping $\Psi$ takes the boundary of $M_1$ onto the boundary of $M_2$. In light of Proposition \ref{thm:bndry_defining} we need to understand how this map carries over the sets $\sigma(z,v)$, as in \eqref{eqn:sigma}. The following lemma gives an answer to this question.

\begin{lemma}
Let Riemannian manifolds $(M_1, g_1)$ and $(M_2, g_2)$ be as in Theorem \ref{thm:main}. If $(z_0,v) \in P(\Gamma_1)$ then 
$\Psi(\sigma(z_0,v)) = \sigma(\phi(z_0), \mathrm{D}\phi v)$. 
\label{lem:preserve_sigma}
\end{lemma}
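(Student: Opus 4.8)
The plan is to unwind the definition of $\sigma(z_0,v)$ from \eqref{eqn:sigma} and transport each of its four defining clauses across $\Psi$, using that $\Psi$ identifies the travel time functions of $M_1$ and $M_2$ in the sense of \eqref{eq:distances_agree} and that $\phi$ is a Riemannian isometry of the measurement regions by Lemma \ref{lemma:iso_on_gammas}. The key observation is that for $p \in M_1$ the function $r_p^{M_1} = d_1(p,\cdot)$ on $\Gamma_1$ and the function $r_{\Psi(p)}^{M_2} = d_2(\Psi(p),\cdot)$ on $\Gamma_2$ are related by $r_{\Psi(p)}^{M_2} = r_p^{M_1}\circ \phi^{-1}$, which is exactly equation \eqref{eq:distances_agree} rewritten. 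Consequently $r_p^{M_1}$ is smooth near $z_0\in\Gamma_1$ if and only if $r_{\Psi(p)}^{M_2}$ is smooth near $\phi(z_0)\in\Gamma_2$, since $\phi$ is a diffeomorphism; and in that case the boundary gradients are related by $\grad_{\partial M_2} r_{\Psi(p)}^{M_2}(\phi(z_0)) = \mathrm D\phi\big(\grad_{\partial M_1} r_p^{M_1}(z_0)\big)$, because $\phi$ preserves the first fundamental form of the boundary (Lemma \ref{lemma:iso_on_gammas}) so it intertwines the boundary gradients.

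First I would establish this gradient-intertwining identity as the technical core: if $f$ is a function smooth near $z_0$ on $\Gamma_1$ then $\grad_{\partial M_2}(f\circ\phi^{-1})(\phi(z_0)) = \mathrm D\phi(\grad_{\partial M_1} f(z_0))$, which follows from $\mathrm D(f\circ\phi^{-1}) = \mathrm Df\circ \mathrm D\phi^{-1}$ together with the fact that $\mathrm D\phi$ is a linear isometry $T_{z_0}\Gamma_1 \to T_{\phi(z_0)}\Gamma_2$. Next I would verify the containment $\Psi(\sigma(z_0,v)) \subseteq \sigma(\phi(z_0),\mathrm D\phi v)$: take $p\in\sigma(z_0,v)$. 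If $p = z_0$ then $\Psi(p) = \phi(z_0)$ by Lemma \ref{lemma:iso_on_gammas}, which is the extra point adjoined in \eqref{eqn:sigma}, so that case is immediate. Otherwise $p$ has a neighborhood $U\subset M_1$ witnessing the three conditions; then $\Psi(U)$ is a neighborhood of $\Psi(p)$ in $M_2$ since $\Psi$ is a homeomorphism, for every $q'\in\Psi(U)$ write $q' = \Psi(q)$ with $q\in U$ and note $r_{q'}^{M_2} = r_q^{M_1}\circ\phi^{-1}$ is smooth near $\phi(z_0)$, the map $q'\mapsto \grad_{\partial M_2} r_{q'}^{M_2}(\phi(z_0)) = \mathrm D\phi(\grad_{\partial M_1} r_q^{M_1}(z_0))$ is continuous on $\Psi(U)$ as the composition of a continuous map with the linear map $\mathrm D\phi$, and finally $\grad_{\partial M_2} r_{\Psi(p)}^{M_2}(\phi(z_0)) = \mathrm D\phi(\grad_{\partial M_1} r_p^{M_1}(z_0)) = \mathrm D\phi(-v) = -\mathrm D\phi v$. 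Hence $\Psi(p)\in\sigma(\phi(z_0),\mathrm D\phi v)$.

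For the reverse containment I would run the same argument with the roles of $M_1$ and $M_2$ interchanged, using that $\Psi^{-1}\colon M_2\to M_1$ is the analogous homeomorphism built from $F^{-1}(h) = h\circ\phi$ and that $\phi^{-1}$ is likewise a Riemannian isometry of the boundary regions, so that $\Psi^{-1}(\sigma(\phi(z_0),\mathrm D\phi v)) \subseteq \sigma(z_0, \mathrm D\phi^{-1}(\mathrm D\phi v)) = \sigma(z_0,v)$; applying $\Psi$ to both sides gives $\sigma(\phi(z_0),\mathrm D\phi v)\subseteq\Psi(\sigma(z_0,v))$, and combining with the first inclusion finishes the proof. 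I expect the main obstacle to be purely bookkeeping rather than conceptual: one must be careful that the neighborhoods and the continuity statements in \eqref{eqn:sigma} are genuinely preserved under the homeomorphism $\Psi$ and under precomposition with the diffeomorphism $\phi$ — in particular that "smooth near $z_0$" is a diffeomorphism-invariant notion and that the linear map $\mathrm D\phi$ (which depends on the point $\phi(z_0)$ only) can be pulled out of the continuity condition. None of this requires the deeper geometry of Section \ref{sec:dist_funct}; it is a clean transport-of-structure argument once the gradient-intertwining identity is in hand.
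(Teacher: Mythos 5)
Your proposal is correct, and its overall skeleton matches the paper's proof: treat the adjoined point $z_0$ separately, use the identity $r_{\Psi(p)}=r_p\circ\phi^{-1}$ coming from \eqref{eq:distances_agree}, push the boundary gradients forward via the intertwining formula $\grad_{\p M_2} r_{\Psi(p)}(\phi(z_0))=\mathrm{D}\phi(z_0)\grad_{\p M_1} r_p(z_0)$ (which is exactly the paper's equation \eqref{eq:pushforward_of_grad}, justified as you do by Lemma \ref{lemma:iso_on_gammas}), and obtain the reverse inclusion by swapping the roles of $M_1$ and $M_2$. The one genuine divergence is how the witnessing neighborhood of $\Psi(p_0)$ is produced. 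You transport the neighborhood $U$ that the definition \eqref{eqn:sigma} already hands you: since $\Psi$ is a homeomorphism, $\Psi(U)$ witnesses all three clauses for $\Psi(p_0)$, with smoothness of $r_{\Psi(q)}$ near $\phi(z_0)$ following from smoothness of $r_q$ near $z_0$ and the diffeomorphism $\phi$, and continuity of $q'\mapsto\grad_{\p M_2}r_{q'}(\phi(z_0))$ following by composing the continuous gradient map on $U$ with $\Psi^{-1}$ and the fixed linear map $\mathrm{D}\phi(z_0)$. The paper instead re-derives a smoothness neighborhood from the geometry: it first argues (as in the proof of Lemma \ref{thm: Tzv=exit}) that $z_0$ is not in the cut locus of $p_0$, then invokes Proposition \ref{cor:smoothness_of_d} to get a product set $U\times V$ on which $d_1$ is smooth, and only then transports. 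Your route is more elementary and makes transparent that this lemma is pure transport of structure, needing nothing from Section \ref{sec:dist_funct} beyond what is already encoded in the definition of $\sigma(z,v)$; the paper's route buys the slightly stronger (but here unneeded) fact that the smoothness holds on a neighborhood of $z_0$ uniform in $q\in U$. Both are complete proofs.
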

\begin{proof}
Clearly we have that $\Psi(z_0)=\phi(z_0)\in \sigma(\phi(z_0), \mathrm{D}\phi v)$. So suppose that $p_0 \in \sigma(z_0,v)\setminus \{z_0\}$. 
Hence, by the same argument as in the proof of Lemma 
\ref{thm: Tzv=exit}, we get that $z_0$ is not in the cut-locus of $p_0$.
Thus by Proposition \ref{cor:smoothness_of_d} we can choose a neighborhood $U\times V\subset M_1 \times M_1$ of $(p_0,z_0)$ where the distance function $d_1(\cdot, \cdot)$ is smooth. 
Since the map $\Psi$ is a homeomorphism the set $\Psi(U)\subset M_2$ is open, and we have by \eqref{eq:distances_agree}
that for each $q \in \Psi(U)$, the function $r_{q}(\cdot) = d_1(\Psi^{-1}(q), \phi^{-1}(\cdot))$ is smooth on the open set $\phi(V\cap \Gamma_1)\subset \Gamma_2$.

Since $\phi\colon \Gamma_1 \to \Gamma_2$ is a Riemannian isometry we have that
\begin{equation}
\label{eq:pushforward_of_grad}
\grad_{\p M_2} r_{\Psi(p)}(\phi(z_0))
=\mathrm{D}\phi (z_0)\grad_{\p M_1} r_p(z_0), \quad \text{for } p \in U.
\end{equation}
Here $\mathrm{D}$ stands for the differential, $\grad_{\p M_1}$ for the boundary gradient of $\Gamma_1$ and $\grad_{\p M_2}$ for that of $\Gamma_2$.  Since the right hand side of equation \eqref{eq:pushforward_of_grad} is continuous in $p$, the function
\[
q=\Psi(p)\mapsto \grad_{\p M_2} r_{q}(\phi(z_0))
\]
is continuous in $\Psi(U)$. Finally
\[
\grad_{\p M_2} r_{\Psi(p_0)}(\phi(z_0))
=\mathrm{D}\phi (z_0)\grad_{\p M_1} r_{p_0}(z_0)=-\mathrm{D}\phi (z_0)v
\]
implies
$
\Psi(p_0) \in \sigma(\phi(z_0), \mathrm{D}\phi v).
$

On the other hand after reversing the roles of $M_1$ and $M_2$ we can use the same proof to show
$
\sigma(z_0,v) \supset \Psi^{-1}(\sigma(\phi(z_0), \mathrm{D}\phi v))$, 
implying
$\Psi(\sigma(z_0,v)) = \sigma(\phi(z_0), \mathrm{D}\phi v).
$
This ends the proof.
\end{proof}

\begin{lemma}
Let Riemannian manifolds $(M_1, g_1)$ and $(M_2, g_2)$ be as in Theorem \ref{thm:main}. Then $\Psi(\dM_1) = \dM_2$. Moreover, $\Psi(M_1^{int}) = M_2^{int}$.
\label{lem:boundary_to_boundary}
\end{lemma}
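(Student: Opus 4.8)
The plan is to use the data-driven characterization of the boundary furnished by Proposition~\ref{thm:bndry_defining} together with the transfer lemmas already established, namely Lemma~\ref{lemma:iso_on_gammas} and Lemma~\ref{lem:preserve_sigma}. First I would recall that Proposition~\ref{thm:bndry_defining} says that a point $p_0\in M_1$ lies in $\partial M_1$ if and only if there exists $(z,v)\in P(\Gamma_1)$ with $p_0\in\sigma(z,v)$ and $r_{p_0}(z)=T_{z,v}$. The strategy is to show that each ingredient of this condition is preserved under $\Psi$ and under the push-forward $(z,v)\mapsto(\phi(z),\mathrm D\phi\, v)$, so that $p_0$ satisfies the boundary criterion in $M_1$ exactly when $\Psi(p_0)$ satisfies it in $M_2$.

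Concretely, I would argue as follows. By Lemma~\ref{lemma:iso_on_gammas}, $\phi$ is a Riemannian isometry of the measurement regions, so $\mathrm D\phi$ maps $P(\Gamma_1)$ bijectively onto $P(\Gamma_2)$; thus $(z,v)\in P(\Gamma_1)$ iff $(\phi(z),\mathrm D\phi\, v)\in P(\Gamma_2)$. By Lemma~\ref{lem:preserve_sigma}, $\Psi(\sigma(z,v))=\sigma(\phi(z),\mathrm D\phi\, v)$, so $p_0\in\sigma(z,v)$ iff $\Psi(p_0)\in\sigma(\phi(z),\mathrm D\phi\, v)$. The remaining point to check is that the scalar quantity $T_{z,v}$ from \eqref{eqn:Tzv} is preserved, i.e.\ that $T_{z,v}^{(1)}=T_{\phi(z),\mathrm D\phi\, v}^{(2)}$, where the superscripts indicate which manifold the quantity is computed on. This follows directly from the definition $T_{z,v}=\sup_{p\in\sigma(z,v)}d(p,z)$: using \eqref{eq:distances_agree} we have $d_2(\Psi(p),\phi(z))=d_1(p,z)$ for all $p\in\sigma(z,v)$, and since $\Psi$ maps $\sigma(z,v)$ onto $\sigma(\phi(z),\mathrm D\phi\, v)$ bijectively, the two suprema coincide. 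Likewise $r_{p_0}(z)=d_1(p_0,z)=d_2(\Psi(p_0),\phi(z))=r_{\Psi(p_0)}(\phi(z))$ by \eqref{eq:distances_agree}. Hence the full boundary-defining condition transfers: $p_0\in\partial M_1$ iff $\Psi(p_0)\in\partial M_2$, which gives $\Psi(\partial M_1)=\partial M_2$. Since $\Psi$ is a homeomorphism of $M_1$ onto $M_2$ that carries $\partial M_1$ onto $\partial M_2$, it restricts to a homeomorphism of the interiors, so $\Psi(M_1^{\text{int}})=M_2^{\text{int}}$.

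I do not expect a serious obstacle here, since all the heavy lifting has been done in the preceding lemmas; the only care needed is bookkeeping, making sure that $T_{z,v}$ and $r_{p_0}(z)$ really are expressed purely in terms of distances between points of $\sigma(z,v)$ and the basepoint $z\in\Gamma_1$, so that \eqref{eq:distances_agree} applies verbatim. The mildest subtlety is that $\Psi$ restricted to $\sigma(z,v)$ is a bijection onto $\sigma(\phi(z),\mathrm D\phi\, v)$ — this is exactly the content of Lemma~\ref{lem:preserve_sigma} together with the injectivity of the homeomorphism $\Psi$ — which is what allows the supremum defining $T_{z,v}$ to be matched on the nose rather than merely bounded one way. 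With that in hand the equivalence of the two boundary criteria is immediate, and the statement about interiors is a formal consequence of $\Psi$ being a homeomorphism respecting boundaries.
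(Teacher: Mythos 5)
Your proposal is correct and follows essentially the same route as the paper: both transfer the boundary criterion of Proposition~\ref{thm:bndry_defining} through $\Psi$ using Lemma~\ref{lem:preserve_sigma} for the $\sigma$-sets and equation \eqref{eq:distances_agree} to match $T_{z,v}$ with $T_{\phi(z),\mathrm D\phi\,v}$ and $r_{p_0}(z)$ with $r_{\Psi(p_0)}(\phi(z))$, then deduce the interior statement from bijectivity.
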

\begin{proof}
Let $p \in \dM_1$. Due to Proposition \ref{thm:bndry_defining} there is a $(z, v) \in 
P(\Gamma_1)$ such that $p$ is in the closed set $\sigma(z,v)$ and $r_{p_1}(z) = T_{z,v}$. Thus Lemma \ref{lem:preserve_sigma} gives 
$
\Psi(\sigma(z,v)) = \sigma(\phi(z),\mathrm{D}\phi v ),
$
and since $\Psi$ is a homeomorphism, also the set $\sigma(\phi(z),\mathrm{D}\phi v )$ is closed and contains $\Psi(p)$. Furthermore, by equation \eqref{eq:distances_agree} we have that
$
r_{\Psi(q)}(\phi(z))=r_q(z), \: \text{for all } \: q\in \sigma(z,v).
$
Therefore 
\[
T_{\phi(z),\mathrm{D}\phi v}=T_{z, v}=r_p(z)=r_{\Psi(p)}(\phi(z)).
\]
From here Proposition \ref{thm:bndry_defining} implies that $\Psi(p)$ is in $\p M_2$. Thus $\Psi(\p M_1)\subset \p M_2$ and by using the same argument for $\Psi^{-1}$ it follows that $\Psi(\dM_1) = \dM_2$. Since $M^{int}_1$ and $\p M_1$ are disjoint and $\Psi$ is a bijection we also have that $\Psi(M^{int}_1)=M^{int}_2$.
\end{proof}

\begin{lemma}
Let Riemannian manifolds $(M_1, g_1)$ and $(M_2, g_2)$ be as in Theorem \ref{thm:main}.  The mapping $\Psi:M_1 \to M_2$, given in formula \eqref{eqn:diffeo_between_manifolds}, is a diffeomorphism. 
\label{lemma:diffeo_between_manifolds}
\end{lemma}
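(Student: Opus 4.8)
The plan is to show that $\Psi$ and $\Psi^{-1}$ are smooth by exhibiting them locally as compositions of the already-constructed smooth coordinate charts. Since $\Psi$ is a homeomorphism by the construction in \eqref{eqn:diffeo_between_manifolds}, and $\Psi(\dM_1)=\dM_2$, $\Psi(M_1^{\text{int}})=M_2^{\text{int}}$ by Lemma \ref{lem:boundary_to_boundary}, it suffices to check smoothness near an arbitrary $p_0\in M_1$, splitting into the cases $p_0\in M_1^{\text{int}}$ and $p_0\in\dM_1$. In either case, I would pick $z_0\in\Gamma_1$, $\upo$, $\vzo$ as in Theorem \ref{prop:dist_smoothness} (for the boundary case, shrink to $\upo'$ as in Lemma \ref{thm:bndry_tzv=texit}), so that near $p_0$ we have the coordinate chart $\alpha$ from Proposition \ref{thm:int_coords} (resp.\ $\beta$ from Proposition \ref{thm:bndry_coords}). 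On the target side, $q_0:=\Psi(p_0)$ lies in $M_2$, and by Lemma \ref{lem:ball_at_the_boundary} together with Lemma \ref{lemma:iso_on_gammas} the point $\phi(z_0)\in\Gamma_2$ is an admissible measurement point for $q_0$; more precisely, one must first check that $\phi(z_0)$ lies outside the cut locus of $q_0$ in $M_2$ so that $q_0$ has a neighborhood on which $d_2(\cdot,\phi(z_0))$ is smooth. This follows because $\sigma(z_0,-\grad_{\dM_1}r_{p_0}(z_0))$ contains $p_0$ as a non-cut point of $z_0$ (by the argument in Lemma \ref{thm: Tzv=exit} / the definition of $\sigma$), and Lemma \ref{lem:preserve_sigma} transports this to $\sigma(\phi(z_0),\mathrm{D}\phi\,v)$ containing $q_0$.

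The key computation is then to express $\Psi$ in these coordinates. Write $v_0:=-\grad_{\dM_1}r_{p_0}(z_0)\in P_{z_0}(\Gamma_1)$. For the interior case, the chart $\alpha$ on $\upo$ sends $p\mapsto(-\grad_{\dM_1}r_p(z_0),r_p(z_0))$, and the analogous chart $\alpha_2$ on a neighborhood of $q_0$ in $M_2$ (built from the point $\phi(z_0)\in\Gamma_2$) sends $q\mapsto(-\grad_{\dM_2}r_q(\phi(z_0)),r_q(\phi(z_0)))$. Using \eqref{eq:distances_agree}, $r_{\Psi(p)}(\phi(z_0))=r_p(z_0)$, and using \eqref{eq:pushforward_of_grad} from the proof of Lemma \ref{lem:preserve_sigma}, $\grad_{\dM_2}r_{\Psi(p)}(\phi(z_0))=\mathrm{D}\phi(z_0)\grad_{\dM_1}r_p(z_0)$. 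Hence in coordinates
\[
\alpha_2\circ\Psi\circ\alpha^{-1}(w,t)=\bigl(\mathrm{D}\phi(z_0)\,w,\;t\bigr),
\]
which is smooth because $\mathrm{D}\phi(z_0)\colon P_{z_0}(\Gamma_1)\to P_{\phi(z_0)}(\Gamma_2)$ is a linear isomorphism (indeed a linear isometry, by Lemma \ref{lemma:iso_on_gammas}). For the boundary case one uses $\beta$, $\beta_2$ instead, and additionally the identity $T_{\phi(z_0),\mathrm{D}\phi v}=T_{z_0,v}$ established in the proof of Lemma \ref{lem:boundary_to_boundary} (which already follows from \eqref{eq:distances_agree} and Lemma \ref{lem:preserve_sigma}), so that $\beta_2\circ\Psi\circ\beta^{-1}(w,t)=(\mathrm{D}\phi(z_0)\,w,\,t)$ again. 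In both cases the coordinate expression of $\Psi$ is an affine map with invertible linear part, so $\Psi$ is a local diffeomorphism near $p_0$; reversing the roles of $M_1$ and $M_2$ gives smoothness of $\Psi^{-1}$. A bijective local diffeomorphism is a diffeomorphism, proving the claim.

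The main obstacle I anticipate is the bookkeeping needed to legitimately build the target-side charts $\alpha_2,\beta_2$ at $q_0=\Psi(p_0)$: one must verify that $\phi(z_0)$ is a valid measurement point for $q_0$ in the sense of Theorem \ref{prop:dist_smoothness} (i.e.\ $q_0\notin\{q_0\}\cup\locus(\phi(z_0))$ in $M_2$, or $q_0\in\upo'$ in the boundary case), so that Propositions \ref{thm:int_coords} and \ref{thm:bndry_coords} genuinely apply on the $M_2$ side. This is handled by transporting, via Lemma \ref{lem:preserve_sigma}, the defining properties of $\sigma(z_0,v_0)$ — in particular that $p_0$ is connected to $z_0$ by a unique distance-minimizing non-conjugate geodesic — over to $\sigma(\phi(z_0),\mathrm{D}\phi\,v_0)$, using that $\Psi$ is a homeomorphism preserving the $\sigma$-sets and $T_{z,v}$-values and that $\phi$ is a Riemannian isometry on the measurement regions. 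Once this is set up carefully, the rest is the routine coordinate computation sketched above, with no further analytic input beyond \eqref{eq:distances_agree}, \eqref{eq:pushforward_of_grad}, and the smoothness already proved in Section \ref{sec:reconst}.
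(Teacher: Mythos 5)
Your overall strategy is the paper's: split into interior and boundary cases via Lemma \ref{lem:boundary_to_boundary}, and show that in the charts $\alpha_1,\alpha_2$ (resp.\ $\beta_1,\beta_2$) the map $\Psi$ reads $(v,t)\mapsto(\mathrm{D}\phi(z_0)v,t)$, using \eqref{eq:distances_agree}, \eqref{eq:pushforward_of_grad} and $T_{\phi(z_0),\mathrm{D}\phi v}=T_{z_0,v}$. That computation is exactly what the paper does. Where you deviate is the step you yourself identify as the main obstacle: justifying that $\phi(z_0)$ is an admissible measurement point for $q_0=\Psi(p_0)$, so that Propositions \ref{thm:int_coords} and \ref{thm:bndry_coords} apply on the $M_2$ side. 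You propose to derive this by transporting $\sigma$-membership through Lemma \ref{lem:preserve_sigma} and then invoking the argument of Lemma \ref{thm: Tzv=exit} to conclude $q_0\notin\locus_{M_2}(\phi(z_0))$.

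This transport closes the interior case (there, a cut point of $\phi(z_0)$ lying in $M_2^{\mathrm{int}}$ is excluded by the two-geodesic/conjugate dichotomy exactly as in Lemma \ref{thm: Tzv=exit}), but it does not close the boundary case. When $q_0\in\p M_2$ one necessarily has $q_0=\gamma_{\phi(z_0),w_0}(\exit)$ with $\cut=\exit$, and $q_0\in\sigma(\phi(z_0),w_0)$ does not rule out that $q_0$ is the \emph{first conjugate point} to $\phi(z_0)$: the conjugate-point branch of the argument in Lemma \ref{thm: Tzv=exit} uses $p\in M^{\mathrm{int}}$ to produce nearby points with two minimizing geodesics, and for a boundary point those doubly-connected points may all lie outside $M_2$. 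Without non-conjugacy you cannot invoke Proposition \ref{cor:smoothness_of_d} (nor the invertibility of $\mathrm{D}\exp_{\phi(z_0)}$ needed for $\beta_2^{-1}$), so the target chart $\beta_2$ is not legitimately constructed. The paper sidesteps this entirely by a double-density trick: it applies Theorem \ref{prop:dist_smoothness} to \emph{both} $p_0$ in $M_1$ and $\Psi(p_0)$ in $M_2$, obtaining dense open sets $W_{p_0}\subset\p M_1$ and $W_{\Psi(p_0)}\subset\p M_2$, and chooses $z_0$ with $z_0\in W_{p_0}\cap\Gamma_1$ and $\phi(z_0)\in W_{\Psi(p_0)}$, which is possible since $\phi$ is a diffeomorphism of the measurement regions. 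With that choice the smoothness of $d_2$ on a product neighborhood of $(\Psi(p_0),\phi(z_0))$ is given for free, and the rest of your argument goes through verbatim. You should replace your transport step by this selection of $z_0$ (or else supply a separate argument excluding boundary conjugate points).
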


\begin{proof} 
Let $p_0 \in M_1$, and choose $W_{p_0}\subset \p M_1$ as in Theorem \ref{prop:dist_smoothness}. Since $\phi \colon \Gamma_1 \to \Gamma_2$ is a diffeomorphism, the set $\phi(W_{p_0}\cap \Gamma_1)$ is open and dense in $\Gamma_2$. Then for $\Psi(p_0)\in M_2$ we choose $W_{\Psi(p_0)}\subset \p M_2$ as in Theorem \ref{prop:dist_smoothness} and consider the non-empty open set $W_{\Psi(p_0)}\cap \phi(W_{p_0}\cap \Gamma_1)\subset \Gamma_2$. We pick $z_0 \in W_{p_0}\cap \Gamma_1$ such that $\phi(z_0)\in W_{\Psi(p_0)}\cap \phi(W_{p_0}\cap \Gamma_1)$. 

Let neighborhoods $\upo\subset M_1$ of $p_0$ and $\vzo\subset M_1$ of $z_0$ be such that the distance function $d_1(\cdot,\cdot)$ is smooth in the product set $\upo\times \vzo$. We also choose neighborhoods $U_{\Psi(p_0)}\subset M_2$ of $\Psi(p_0)$ and $V_{\Psi(p_0)}\subset M_2$ of $\phi(z_0)=\Psi(z_0)$ to be such that the distance function $d_2(\cdot,\cdot)$ is smooth in the product set $U_{\Psi(p_0)}\times V_{\Psi(p_0)}$. Since $\Psi\colon M_1 \to M_2$ is a homeomorphism we may choose these four sets in such a way that they satisfy
\[
\Psi(U_{p_0})=U_{\Psi(p_0)},\quad \text{ and } \quad \Psi(V_{p_0})=V_{\Psi(p_0)}.
\]
By Lemma \ref{lem:boundary_to_boundary} we know that $\Psi(p_0)\in  M_2^{int}$ if and only if $p_0 \in M_1^{int}$, and $\Psi(p_0)\in \p M_2$ if and only if $p_0 \in \p M_1$. Next we consider the interior and boundary cases separately. 

\medskip
Suppose first that $p_0$ is an interior point of $M_1$. The functions 
\[
\upo\ni p \mapsto \alpha_1(p)=(-\grad_{\p M_1}r_p(z_0) ,  r_p(z_0)) \in P_{z_0}(\Gamma_1) \times \R, 
\]
and
\[
U_{\Psi(p_0)}\ni q \mapsto \alpha_2(q)=(-\grad_{\p M_2}r_q(\phi(z_0)) ,  r_q(\phi(z_0)))\in P_{\phi(z_0)}(\Gamma_2)\times \R, 
\]
as in Proposition \ref{thm:int_coords}, are smooth local coordinate maps of $M_1$ and $M_2$ respectively. Moreover, by the computations done in the proof of Lemma \ref{lem:preserve_sigma} we get for every $p \in \upo$ that
\[
r_p(z_0)=r_{\Psi(p)}(\phi(z_0)), \quad \text{and} \quad \mathrm{D} \phi(z_0) \grad_{\p M_1}r_p(z_0)=\grad_{\p M_2} r_{\Psi(p)}(\phi(z_0)).
\]
Therefore for any $(v,t)\in \alpha_1(\upo)$ we have that
\[
(\alpha_2\circ \Psi\circ \alpha^{-1}_1)(v,t)=(\mathrm{D}\phi(z_0)v,t).
\]
Thus we have proven that the map $\alpha_2\circ \Psi\circ \alpha^{-1}_1\colon \alpha_1(\upo) \to \alpha_2(U_{\Psi(p_0)})$ is smooth.

\medskip
Then we let $p_0$ be a boundary point of $M_1$. Let $\eta_1(p):=-\grad_{\p M_1}r_p(z_0)$ for $p \in \upo$ and choose $\upo' \subset \upo$ as in Lemma \ref{thm:bndry_tzv=texit} to be such that the set $\sigma(z_0,\eta_1(p))$ is closed and the function $p\mapsto T_{z_0,\eta_1(p)}$ is smooth for every $p \in \upo'$. Let $U'_{\Psi(p_0)}:=\Psi(\upo')\subset U_{\Psi(p_0)}$ and denote $\eta_2(q):=-\grad_{\p M_2}r_{q}(\phi(z_0))$ for $q \in U'_{\Psi(p_0)}$. Since we have that $\mathrm{D}\phi(z_0) \eta_1(p) = \eta_2(\Psi(p))$ it holds by Lemma \ref{lem:preserve_sigma} that the set
$
\sigma(\phi(z_0),\eta_2(\Psi(p)))=\Psi(\sigma(z_0,\eta_1(p))),
$
is closed for every $p \in \upo'$, and thus the function $U'_{\Psi(p_0)}\ni q\to T_{\phi(z_0),\eta_2(q)}$ is smooth by Corollary \ref{cor:cut=Tzv}. Moreover, we have $T_{z_0,\eta_1(p)}=T_{\phi(z_0),\eta_2(\Psi(p))}$ for every $p \in \upo'$.

Then we consider local coordinate maps
\[
\upo' \ni p \mapsto \beta_1(p)=(\eta_1(p), T_{z_0,\eta_1(p)}- r_p(z_0)) \in P_{z_0}(\Gamma_1) \times [0,\infty), 
\]
of $M_1$ and
\[
U'_{\Psi(p_0)}\ni q \mapsto \beta_2(q)=(\eta_2(q) ,  T_{\phi(z_0),\eta_2(q)} -r_q(\phi(z_0)))\in  P_{\phi(z_0)}(\Gamma_2) \times [0,\infty), 
\]
of $M_2$, as in Proposition \ref{thm:bndry_coords}. By the discussion above we have for any $(v,t)\in \beta_1(\upo')$ that
\[
(\beta_2\circ \Psi\circ \beta_1^{-1})(v,t)=(\mathrm{D}\phi(z_0) v, t),
\]
which implies that the map $(\beta_2\circ \Psi\circ \beta_1^{-1})\colon \beta_1(\upo')\to \beta_{2}(U'_{\Psi(p_0)})$ is smooth. 

\medskip
By combining these two cases we have proved that for every $p_0 \in M$ a local representation of the map $\Psi$ is smooth, making $\Psi\colon M_1 \to M_2$ smooth. Finally by an analogous argument for $\Psi^{-1}$ we can show that this map is also smooth. Thus $\Psi\colon M_1 \to M_2$ is a diffeomorphism as claimed. 
\end{proof}
We are ready to present the proof of our main inverse problem:

\begin{proof}[Proof of Theorem \ref{thm:main}]
By Lemma \ref{lemma:diffeo_between_manifolds} we know that the map $\Psi\colon M_1 \to M_2$ is a diffeomorphism. We define a metric tensor $\tilde{g}_2$ on $M_1$ as the pull back of the metric $g_2$ with respect to map $\Psi$. Thus it suffices to consider a smooth manifold $M=M_1$ with an open measurement region $\Gamma=\Gamma_1\subset \p M$ and two Riemannian metrics $g_1$ and $\tilde{g_2}$. Moreover $\p M$ is strictly convex with respect to both of these metrics.

Let $\tilde d_2(\cdot,\cdot)$ be the distance function of $\tilde{g_2}$. We note that due to equation \eqref{eq:distances_agree} we have
$
d_1(p,z)
=\tilde{d}_2(p,z), \: \text{for all } (p,z)\in M\times \Gamma.
$
By Lemma \ref{lemma:iso_on_gammas} we get that $g_1(p)=\tilde g_2(p)$ for all $p \in \Gamma$. Let $p_0 \in M$. Thus the map $H_{p_0}$ given by \eqref{eq:grad_map} is the same for both metrics. From here Lemma \ref{lem:open_subset_unit_sphere} and Proposition \ref{prop:metric} imply that $g_1(p_0)=\tilde g_2(p_0)$. Therefore map $\Psi$ is a Riemannian isometry as claimed.
\end{proof}

\bibliographystyle{abbrv}
\bibliography{biblio,introduction}
\end{document}